\definecolor{mygreen}{rgb}{0.1,0.75,0.2}
\providecommand{\bbs}[1]{(#1)}
 \newtheorem{thm}{Theorem}[section]
 \newtheorem{cor}[thm]{Corollary}
 \newtheorem{lem}[thm]{Lemma}
 \newtheorem{prop}[thm]{Proposition}
 \theoremstyle{definition}
 \theoremstyle{remark}
 \newtheorem{rem}[thm]{Remark}
 \numberwithin{equation}{section}
\newcommand{\pt}{\partial}
\newcommand{\eps}{\varepsilon}
\newcommand{\ud}{\,\mathrm{d}}
\newcommand{\8}{\infty}
\newcommand{\bR}{\mathbb{R}}
\newcommand{\bZ}{\mathbb{Z}}
\newcommand{\bE}{\mathbb{E}}
\newcommand{\vx}{\vec{x}}
\newcommand{\V}{\scriptscriptstyle{\mathrm{h}}}
\newcommand{\sr}{\scriptscriptstyle{\mathrm{r}}}
\newcommand{\sk}{\scriptscriptstyle{\mathrm{k}}}
\newcommand{\sm}{\scriptscriptstyle{\mathrm{m}}}
\newcommand{\sbe}{\scriptscriptstyle{\beta}}
\newcommand{\sn}{\scriptscriptstyle{\mathrm{n}}}
\newcommand{\zero}{\scriptscriptstyle{\mathrm{0}}}
\newcommand{\sa}{\scriptscriptstyle{\mathrm{a}}}
\newcommand{\scb}{\scriptscriptstyle{\mathrm{b}}}
\newcommand{\sci}{\scriptscriptstyle{\mathrm{i}}}
\newcommand{\rh}{r_{\V}}
\newcommand{\vxzh}{\vx_{\zero}^{\V}}
\newcommand{\beh}{\beta_{\V}}
\newcommand{\kzh}{k_{\zero}^{\V}}
\newcommand{\alzh}{\alpha_{\zero}^{\V}}
\newcommand{\aldh}{\alpha_{\V}}
\newcommand{\aluh}{\alpha^{\V}}
\newcommand{\alhi}{\alpha_{\V, \sci}}
\newcommand{\ti}{t_{\sci}}
\newcommand{\uh}{u_{\scriptscriptstyle{\mathrm{h}}}}
\newcommand{\tuh}{\tilde{u}_{\scriptscriptstyle{\mathrm{h}}}}
\newcommand{\Hh}{H_{\scriptscriptstyle{\mathrm{h}}}}
\newcommand{\tHrh}{\tilde{H}_{\sr,\V}}
\newcommand{\tHrhb}{\tilde{H}_{\sr, \V}^{\sbe}}
\newcommand{\tHrhpb}{\tilde{H}_{\sr, \V'}^{\sbe}}
\newcommand{\Lh}{L_{\scriptscriptstyle{\mathrm{h}}}}
\newcommand{\vh}{v_{\scriptscriptstyle{\mathrm{h}}}}
\newcommand{\wrhb}{w_{\sr,\V}^{\sbe}}
\newcommand{\wrhpb}{w_{\sr,\V'}^{\sbe}}
\newcommand{\zh}{\zeta_{\scriptscriptstyle{\mathrm{h}}}}
\newcommand{\Omegah}{\Omega_{\V}}
\newcommand{\horh}{\hat{\Omega}_{\sr, \V}}
\newcommand{\torh}{\tilde{\Omega}_{\sr, \V}}
\newcommand{\hobrh}{\hat{\Omega}_{\sbe, \sr, \V}}
\newcommand{\tobrh}{\tilde{\Omega}_{\sbe, \sr, \V}}
\newcommand{\tobrhp}{\tilde{\Omega}_{\sbe, \sr, \V'}}
\newcommand{\torhh}{\tilde{\Omega}_{\sr_{\V}, \V}}
\newcommand{\tobhrhh}{\tilde{\Omega}_{\sbe_{\V}, \sr_{\V}, \V}}
\newcommand{\tornhn}{\tilde{\Omega}_{\sr_{\sn}, \V_{\sn}}}
\newcommand{\wrh}{w_{\sr, \V}}
\newcommand{\wrhh}{w_{{\sr}_{\V}, \V}}
\newcommand{\wrhhath}{w_{{\sr}_{\V}, \hat{\V}}}
\newcommand{\wrnhn}{w_{{\sr_{\sn}}, {\V_{\sn}}}}
\newcommand{\wrhhbh}{w_{\sr_{\V},\V}^{\sbe_{\V}}}
\newcommand{\wzk}{w_{\scriptscriptstyle{\mathrm{0}},\sk}}
\newcommand{\uzk}{u_{\scriptscriptstyle{\mathrm{0}},\sk}}
\newcommand{\wrhk}{w_{\sr, \V, \sk}}
\newcommand{\twrh}{\tilde{w}_{\sr, \V}}
\newcommand{\twrhpb}{\tilde{w}_{\sr,\V'}^{\sbe}}
\newcommand{\cv}{X^{\scriptscriptstyle{\mathrm{h}}}}
\newcommand{\vxv}{\vec{x}_{i}}
\newcommand{\vxi}{\vec{x}_i}
\newcommand{\vy}{\vec{y}}
\begin{document}

\title[State-Constrained Chemical Reactions]{State-Constrained Chemical Reactions: Discrete-to-Continuous Hamilton--Jacobi Equations and Large Deviations}

\author[Y. Gao]{Yuan Gao}
\address{Department of Mathematics, Purdue University, West Lafayette, IN}
\email{gao662@purdue.edu}

\author[Y. Han]{Yuxi Han}
\address{Department of Mathematics, Purdue University, West Lafayette, IN}
\email{han891@purdue.edu}

\keywords{Large deviation principle, Varadhan's inverse lemma,  Lax-Oleinik semigroup,  Thermodynamic limit, Optimal control}

\subjclass[2010]{49L25, 37L05, 60F10, 60J74, 92E20}

\date{\today}

\begin{abstract}
We study the macroscopic behavior of chemical reactions modeled as random time-changed Poisson processes on   discrete state spaces. Using the WKB reformulation, the backward equation of the rescaled process leads to a discrete Hamilton–Jacobi equation with state constraints. As the grid size tends to zero, the limiting solution and its associated variational representation are closely connected to the good rate function of the large deviation principle for state-constrained chemical reactions in the thermodynamic limit. In this work, we focus on the limiting behavior of discrete Hamilton–Jacobi equations defined on bounded domains with state-constraint boundary conditions. For a single chemical reaction, we show that, under a suitable reparametrization, the solution of the discrete Hamilton–Jacobi equation converges to the solution of a continuous Hamilton–Jacobi equation with a Neumann boundary condition. Building on this convergence result and the associated variational representation, we establish the large deviation principle for the rescaled chemical reaction process in bounded domains.

\end{abstract}

\maketitle

\section{Introduction}







Chemical reactions can be modeled by a random time-changed Poisson process on a countable state space. Their macroscopic behavior, including phenomena such as large fluctuations, can be analyzed through the WKB reformulation. For the backward equation of the rescaled process, the WKB reformulation coincides with Varadhan's discrete nonlinear semigroup, which serves as a monotone scheme approximating the limiting first-order Hamilton–Jacobi equation (HJE).

In practical applications, physical considerations often motivate formulating the backward equations and the associated discrete HJE on a bounded domain with a state-constraint boundary condition. For instance, in a closed system, the conservation of mass naturally implies that the states remain bounded. In open systems, where materials can be exchanged with the environment, mass is not strictly conserved. Nevertheless, such systems can still limit the amount of a chemostat species that is  transferred into or out  of the reaction system.   Moreover, certain chemical processes require strict compositional constraints to ensure the stability and precision in manufacturing.

In this paper, we assume that
the discrete backward HJE is defined on the grid point
\[
\Omegah: =\left\{\vec{i}h : \vec{i} \in \mathbb{N}^N \right\} \cap\overline{\Omega},
\]
where $\Omega \subset \mathbb{R}^N_{+}:=   \{x\in \bR^N: \, x_i > 0 \text{ for all } i\}$ is an open, bounded domain with $\overline{\Omega} \subset \mathbb{R}^N_{+}$. Our goal is to establish the convergence of the monotone scheme given by the discrete backward HJE with a state constraint, as well as to derive the corresponding large deviation principle (LDP) on $\overline{\Omega}$.

We consider a system of $M$ chemical reactions involving $N$ molecular species, occurring in a container of size $\frac{1}{h}$, where $\frac{1}{h} \gg 1$. A stochastic description of these reactions is given by random time-changed Poisson processes $X(t)$ \cite{Kurtz15}. Here, each component of $X(t)$, denoted as
$
X_\ell(t), \, \ell=1, \cdots, N,
$
represents the number of molecules of the $\ell$-th species at time $t$. The process $X(t)$ is a continuous-time Markov chain defined on a discrete state space. We assume that reactions are independent of the spatial positions of the molecules and that the total number of molecules is proportional to the container size. The limit $h \to 0$ is known as the thermodynamic limit or macroscopic limit. For the $j$-th reaction, let $\vec{\nu}_j \in \mathbb{Z}^N$ denote the net change vector in the molecular counts. Consider an open, bounded domain $\Omega \subset \mathbb{R}^N_{+}$ with $\overline{\Omega} \subset \mathbb{R}^N_{+}$. Define the rescaled process $\cv(t)=hX(t)$ and assume that $\cv(t) \in \Omegah=\left\{\vec{i}h: \vec{i} \in \mathbb{N}^N \right\} \cap\overline{\Omega}$. Then, $\cv(t)$ evolves according to
\begin{equation}\label{CRCR}
\begin{aligned}
\cv(t) =& \cv(0) + \sum_{j=1}^M  \vec{\nu}_{j} h \Bigg(  \mathbbm{1}_{\left\{ \cv(t^-) + \vec{\nu}_jh \in \Omegah \right\}} Y^+_j  \left(\frac{1}{h}\int_0^t {\Phi}^+_j(\cv(s))\ud s\right)\\
&\qquad \qquad \qquad \qquad \qquad\qquad- \mathbbm{1}_{\left\{ \cv(t^-) - \vec{\nu}_jh \in \Omegah \right\}}Y^-_j  \left(\frac{1}{h}\int_0^t {\Phi}^-_j(\cv(s))\ud s \right)\Bigg),
\end{aligned}
\end{equation}
 where $Y_j^\pm(t)$ are independent and identically distributed (i.i.d.) unit-rate Poisson processes, and $\mathbbm{1}$ is the indicating function to enforce the state constraint, ensuring that the process remains in $\Omegah$. $\Phi^\pm_j ( \vx)$ is the intensity  of this Poisson process, indicating the encounter of species in one reaction. The WKB reformulation of the backward equation for $\cv$ is given by
\begin{equation}\label{upwind0}
\begin{aligned}
    \pt_t \uh(\vxv,t) =& \sum_{j=1, \vxv+ \vec{\nu}_{j} h\in \Omegah }^M     \Phi^+_j(\vxv)\left( e^{\frac{\uh\left(\vxv+ \vec{\nu}_{j} h,t\right)-\uh\left(\vxv, t\right)}{h}} - 1\right) \\
&+ \sum_{j=1,  \vxv- \vec{\nu}_{j}h\in\Omegah}^M \Phi_j^-(\vxv)\left( e^{  \frac{\uh\left(\vxv- \vec{\nu}_{j}h,t\right)-\uh\left(\vxv,t\right)}{h}  - 1}\right), \quad \left(\vxv, t\right) \in \Omegah \times (0, \infty),
\end{aligned}
\end{equation}
subject to the initial condition
\[
\uh(\vxv, 0) = u_0(\vxv), \quad \vxv \in \Omegah,
\]
where $u_0:\overline{\Omega} \to \mathbb{R}$ is a bounded and continuous function. In Proposition \ref{prop:variformuh}, we provide a variational formula for $\uh$ in terms of  an optimal control problem subject to a state constraint, which is new in the literature. As $h \to 0$, we expect that the solution of \eqref{upwind0} converges to a limit $u$, which solves the corresponding continuous HJE
\begin{equation}\label{eqn:cHJE}
\left\{\begin{aligned}
&\partial_t u(\vx,t) =  \sum_{j=1}^M \left[ \Phi^+_j(\vec{x})\left(e^{\vec{\nu}_j\cdot \nabla u(\vx, t)}-1\right)     +   \Phi_j^-(\vec{x})\left( e^{-\vec{\nu}_j\cdot \nabla u(\vx, t)}-1\right)  \right], \qquad (\vx, t)\in\Omega \times (0, \infty),\\
&u(\vx, 0) = u_0(\vx), \qquad \vx \in \overline{\Omega},
\end{aligned}
\right.
\end{equation}
with an appropriate boundary condition for $t>0$.

Surprisingly, even though the discrete HJE \eqref{upwind0} is subject to a state-constraint boundary condition, the limiting solution $u$ does not   satisfy the corresponding continuous HJE \eqref{eqn:cHJE} with a state constraint but rather with a Nuemann boundary condition. An explicit example demonstrating this phenomenon is provided in Section \ref{sec:example}. The main challenge lies in identifying the correct boundary condition for \eqref{eqn:cHJE} when $t>0$. We resolve this issue in the case of a single chemical reaction.

\subsection{Relevant Literature}
Our work is motivated by the recent study of Gao and Liu \cite{GaoLiu2023}, which established the convergence of the discrete HJE via the discrete nonlinear semigroup in the unbounded domain $\mathbb{R}^N_+$, together with the large deviation principle for chemical reaction processes at any fixed time under no-reaction boundary conditions. The stochastic optimal control problem of diffusion processes with state constraints was first studied in \cite{Lasry1989}, where the admissible controls are those that keep the diffusion process within a bounded domain with probability one at all times. In the zero-noise limit of diffusion processes, the rate of convergence of the corresponding second-order stationary HJE to the first-order state-constraint HJE was analyzed in \cite{HanTu2022_vanishVisc_stateConst, DuttaNguyenTu2025}. In contrast, we consider a stochastic control problem for a jump process, where the controls ensure that the process remains within a bounded domain with probability one at all times. In our setting, the zero-noise limit corresponds to the thermodynamic or large-volume limit, in which the grid size of the jump process tends to zero. To the best of our knowledge, the convergence of the discrete backward HJE in a bounded domain with state-constraint boundary conditions, as well as its limiting equation, has not been previously investigated.

The use of viscosity solutions to HJE as limits of nonlinear semigroups of Markov processes was developed by Feng and Kurtz \cite{feng2006large}, providing a general and systematic framework for studying LDP. See also the more recent developments in \cite{Kraaij_2016, Kraaij_2020}. The idea of employing nonlinear semigroups and variational principles of Markov processes to analyze LDP dates back to the seminal works of Varadhan and Fleming \cite{Varadhan_1966, fleming1983optimal}, while the idea of using viscosity solutions to HJE originates from \cite{Ishii_Evans_1985, fleming1986pde, Barles_Perthame_1987}. A comprehensive reference for HJE is \cite{tran_hj_2021}.

For chemical reaction in whole space, the sample path LDP was first established in \cite{AgazziDemboEckmann2017} using a time-linear interpolated process combined with the inverse contraction principle. More recently, the boundary issue has been addressed by introducing a uniform vanishing rate at the boundary \cite{Agazzi_Andreis_Patterson_Renger_2021}. Furthermore, a dynamic LDP for the joint evolution of concentrations and fluxes in chemical reaction jump processes was obtained in \cite{patterson2019large}.

For Hamilton–Jacobi equations with state constraints, we refer to \cite{Soner1986, Soner1986b} in the framework of optimal control theory, as well as subsequent works such as \cite{Capuzzo-Dolcetta1990, Ishii1996, ishii_class_2002, Mitake2008}. For Hamilton–Jacobi equations with Neumann boundary conditions, we refer to \cite{lions1985neumann, BARLES1991, DUPUIS19901123, Ishii2011}. For other related topics on state-constraint problems, large deviation principles, discrete schemes, and more, see also \cite{Cagnetti2013, Giga2019Discrete, Sandholm2022, JangTran2025}.


\subsection{Main result}

For the case of a single chemical reaction, i.e., $M=1$, with $N=2$, we establish that the discrete backward HJE with a state constraint converges to the limiting continuous HJE with a Neumann boundary condition. Furthermore, we prove the large deviation principle on the closure $\overline{\Omega}$. The proof extends to all $N\geq 2$ with $M=1$.

When $M=1$, let $\vec{\nu}$ be the reaction vector representing the net change in molecular counts. For $h>0$, let $\vxzh \in \Omegah$ be the initial position of the rescaled process $\cv(t)$. The state space of $\cv(t)$ is then restricted to the line passing through $\vxzh$ in the direction of $\vec{\nu}$. Under this setting, the discrete backward HJE \eqref{upwind0} simplifies to the following form:
\begin{equation}\label{eqn:1duh}
\begin{aligned}
\pt_t \uh(\vxv,t) = & \mathbbm{1}_{\left\{\vxv +h\vec{\nu} \,\in \,\Omegah\right\}}  \Phi^+(\vxv) \left( e^{\frac{\uh\left(\vxv+ \vec{\nu} h,t\right)-\uh\left(\vxv,t\right)}{h}} - 1\right)\\ &+ \mathbbm{1}_{\left\{\vxv -h\vec{\nu} \, \in \,\Omegah \right\}}\Phi^-(\vxv)\left( e^{  \frac{\uh\left(\vxv- \vec{\nu}h,t\right)-\uh\left(\vxv,t\right)}{h} } - 1\right), \quad \left(\vxv, t\right) \in \Omegah \times (0, \infty),\\
\uh(\vxv, 0)=&u_0(\vxv), \quad \vxv \in \Omegah,
\end{aligned}
\end{equation}
where $u_0$ is the prescribed initial condition.
Since the values of $\uh$ at grid points outside the line passing through $\vxv$ in the direction of $\vec{\nu}$ have no influence on $\uh(\vxv)$ for any $\vxv \in \Omegah$, it is natural to reparametrize the problem and reduce \eqref{upwind0} to an equation restricted to this line. This results in a one-dimensional discrete HJE, which significantly simplifies the subsequent analysis.

Let $\vx_0 \in \overline{\Omega}$ be the limiting starting position of the rescaled reaction process, that is, $\lim_{h \to 0+} \vxzh =\vx_0$. Define the maximal closed line segment in the direction $\vec{\nu}$ that contains $\vx_0$ by
\[
S_0=  \left\{\vec{x}_0 + \alpha \vec{\nu}: a\leq \alpha \leq b\right\} \subset \left\{\vec{x}_0 + \alpha \vec{\nu}: \alpha \in \mathbb{R} \right\}\cap \overline{\Omega},
\]
for some constants $a, b \in \mathbb{R}$.

We first consider the case $\vxzh \in S_0$ for sufficently small $h>0$. In this case, the state space of the rescaled chemical reaction process $\cv(t)$ is restricted to the line segment $S_0$. Suppose $\cv(0)=\vxzh=\vx_0+(r+kh)\vec{\nu} \in S_0$ for some $r \in \mathbb{R}$ small and $k \in \mathbb{N}$. Then the process evolves within the discrete set
\[
\horh:=\left\{\vec{x}_0 +(r+\alpha) \vec{\nu} \in S_0: \alpha =kh, k  \in \mathbb{Z}\right\}.
\]
Define the projected state space
\[
\torh :=\{r+\aldh:\aldh = kh, k \in \mathbb{Z}, \vx_0 +(r+\aldh) \vec{\nu} \in S_0\},
\]
and introduce the reduced function $\wrh:\torh \times [0,\infty) \to \mathbb{R}$ by
\[\wrh \left(r+\aldh, t\right): = \uh(\vec{x}_0+(r+\aldh) \vec{\nu} , t).\]
We show that $\wrh$ converges to a function $w$, which is the unique solution of the following continuous Hamilton--Jacobi equation with a Neumann boundary condition:

\begin{equation}\label{HJE_NM}
\left\{
    \begin{aligned}
    \partial_t w(\alpha, t) &= \tilde{H}\left(\alpha, \partial_\alpha w(\alpha, t)\right),
    && \text{in } (a, b) \times (0, \infty), \\[6pt]
    -\partial_\alpha w(a, t) &= \partial_\alpha w(b, t) = 0,
    && \text{for } t > 0, \\[6pt]
    w(\alpha, 0) &= w_0(\alpha),
    && \text{for } \alpha \in [a, b],
    \end{aligned}\tag{{\sf HJE}}
\right.
\end{equation}
where the Hamiltonian is given by
\[
\tilde{H}(\alpha, p)=\tilde{\Phi}^+(\alpha) \left( e^p - 1\right)+ \tilde{\Phi}^-(\alpha)\left( e^{-p} - 1\right),
\]
and
\[
\tilde{\Phi}^+(\alpha) = \Phi^+(\vec{x}_0 + \alpha \vec{\nu}),
\quad
\tilde{\Phi}^-(\alpha) = \Phi^-(\vec{x}_0 + \alpha \vec{\nu}),
\quad
w_0(\alpha) = u_0(\vec{x}_0 + \alpha \vec{\nu}),
\quad \text{for } \alpha \in [a, b].
\]
The proof proceeds by verifying the conditions for viscosity solutions of \eqref{HJE_NM}. Specifically, the upper semicontinuous (USC) envelope $\overline{w}$ of $\wrh$ is shown to be a subsolution to \eqref{HJE_NM}, while the lower semicontinuous (LSC) envelope $\underline{w}$ of $\wrh$ is a supersolution to \eqref{HJE_NM}. The comparison principle of \eqref{HJE_NM} then yields $\overline{w}=\underline{w}=w$, which gives the desired convergence.

We then address the more general case where the starting point $\vxzh$ of $\cv(t)$ lies in $\overline{\Omega}$, under the additional assumption that $\Omega$ is convex. In this case, $\vxzh$ is not necessarily on $S_0$, but it remains close to $\vx_0$ since $\lim_{h \to 0+} \vxzh =\vx_0$. In particular, we assume $\vxzh = \vec{x}_0 + \beta\vec{\nu}_\perp + (r+kh)\vec{\nu}$ for some $r, \beta$ small and $k \in \mathbb{N}$. We consider the maximal closed line segment passing through $\vx_0 + \beta \vec{\nu}_{\perp}$ in the direction of $\vec{\nu}$ and denote by
\[
S_{\sbe}=\left\{\vec{x}_0 +\beta\vec{\nu}_{\perp}+ \alpha \vec{\nu}: a_{\sbe}\leq \alpha \leq b_{\sbe}\right\}
\]
for some $a_{\sbe}, b_{\sbe} \in \mathbb{R}$. Define the projected space corresponding to $S_{\sbe}$ by
\[
\tobrh=\{r+\aldh: \aldh=kh, \vec{x}_0 +\beta \vec{\nu}_\perp+(r+\aldh) \vec{\nu} \in S_{\sbe}\},
\]
and define $\wrhb:\tobrh \times [0,\infty) \to \mathbb{R}$ by
\[
\wrhb \left(r+\aldh, t\right): = \uh(\vec{x}_0+\beta \vec{\nu}_\perp +(r+\aldh) \vec{\nu} , t).
\]
For a fixed $\beta$, the previous convergence result holds for $\wrhb$. Moreover, we show that as $\beta, r, h \to 0$, $\wrhb$ converges to $w$.

\begin{thm}[Convergence on $\overline{\Omega}$, informal summary of Theorem \ref{thm:convergelbeta}]
Let $\vx_0\in \overline{\Omega}$ and assume the setting above. Let $\alpha \in [a, b]$ and $t \geq 0$. Let $h, \rh> 0$ and $\beh$ be sufficiently small, and suppose that $\rh + \alpha_{\V} \in \tobhrhh$ with
\[
 \lim_{h \to 0^+} \rh = 0,
\quad \text{and} \quad
\lim_{h \to 0^+} \alpha_{\V} = \alpha.
\]
Then
\[
\lim_{h\to 0^+}\wrhhbh(\rh+ \alpha_{\V}, t)= w(\alpha, t),
\]
where $w$ is the unique solution of \eqref{HJE_NM}.


\end{thm}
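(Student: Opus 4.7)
The plan is to reduce the joint limit $h, \beh, \rh \to 0$ to the one-segment convergence result (the fixed-$\beta$ case that forms the first part of Theorem \ref{thm:convergelbeta}'s development) combined with a stability analysis in the transverse parameter $\beta$. Since the rescaled process starting from $\vxzh = \vx_0 + \beh\vec{\nu}_\perp + (\rh + kh)\vec{\nu}$ evolves entirely inside the parallel segment $S_{\sbe}$, the reduced function $\wrhhbh$ on $\tobhrhh$ satisfies a one-dimensional discrete HJE of the same form as \eqref{eqn:1duh}, but with coefficients $\tilde{\Phi}^{\pm}(\alpha;\beh) := \Phi^\pm(\vx_0 + \beh\vec{\nu}_\perp + \alpha\vec{\nu})$ and endpoints $a_{\sbe}, b_{\sbe}$ that themselves depend on $\beh$. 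Convexity of $\Omega$ and $\overline{\Omega}\subset\bR^N_+$ ensure $a_{\sbe} \to a$ and $b_{\sbe} \to b$ continuously as $\beh \to 0$.

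First, for each small fixed $\beta$, I would apply the already-established one-segment convergence to obtain, as $h \to 0^+$, locally uniform convergence $\wrhb(\rh + \aldh, t) \to w^\beta(\alpha, t)$, where $w^\beta$ is the unique viscosity solution of the Neumann problem \eqref{HJE_NM} on $(a_\beta, b_\beta)$ with Hamiltonian $\tilde{H}^\beta(\alpha, p) = \tilde{\Phi}^{+}(\alpha;\beta)(e^p - 1) + \tilde{\Phi}^{-}(\alpha;\beta)(e^{-p}-1)$ and initial data $w_0^\beta(\alpha) = u_0(\vx_0 + \beta\vec{\nu}_\perp + \alpha\vec{\nu})$. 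This step is essentially a translate of the in-segment theorem.

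Next, I would prove the stability $w^\beta \to w$ locally uniformly on $[a,b]\times[0,\infty)$ as $\beta \to 0$. The inputs are the uniform continuity of $\Phi^\pm$ in a neighborhood of $S_0$, which yields $\tilde{\Phi}^{\pm}(\cdot;\beta) \to \tilde{\Phi}^{\pm}(\cdot;0)$ uniformly on compact subintervals, together with the uniform continuity of $u_0$, which gives $w_0^\beta \to w_0$ uniformly. To handle the drift of the endpoints, I would linearly reparametrize $[a_\beta, b_\beta]$ onto the fixed interval $[a,b]$, transforming $w^\beta$ into $\hat{w}^\beta$ on $[a,b]\times[0,\infty)$; the rescaled $\hat{w}^\beta$ solves a perturbed Neumann HJE whose Hamiltonian and initial datum converge uniformly to those of \eqref{HJE_NM}. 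Passing to the limit via the Barles--Perthame half-relaxed limits, I would then invoke the comparison principle for \eqref{HJE_NM} to conclude that the upper and lower limits coincide with $w$, giving uniform convergence of $\hat{w}^\beta$ and hence of $w^\beta$ to $w$.

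Finally, a diagonal argument closes the proof. Writing
\[
|\wrhhbh(\rh + \aldh, t) - w(\alpha, t)| \leq |\wrhhbh(\rh + \aldh, t) - w^{\beh}(\aldh, t)| + |w^{\beh}(\aldh, t) - w(\alpha, t)|,
\]
the first term tends to $0$ by the first step applied with $\beta = \beh$ (for $h$ sufficiently small relative to $\beh$), and the second term tends to $0$ by the second step together with the equicontinuity of the family $\{w^\beta\}$ inherited from the comparison principle. The main obstacle I expect is the $\beta$-stability step: because the Neumann boundary points $a_\beta, b_\beta$ themselves move with $\beta$, a direct viscosity comparison across different domains is unavailable, and one must be careful that the reparametrization does not destroy the Neumann condition. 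The half-relaxed limits plus the already-known comparison principle for \eqref{HJE_NM} are precisely what make this transition work.
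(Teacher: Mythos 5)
Your high-level architecture (reduce to the in-segment result plus a stability analysis in the transverse parameter $\beta$) matches the paper's intent, and your intermediate claim that $w^{\sbe}\to w$ as $\beta\to 0$ is indeed proved in the paper (by a control-theoretic comparison of the two Skorokhod problems rather than by reparametrization and half-relaxed limits). However, your final diagonal step has a genuine gap. You split
\[
|\wrhhbh(\rh+\aldh,t)-w(\alpha,t)| \leq |\wrhhbh(\rh+\aldh,t)-w^{\sbe_{\V}}(\aldh,t)| + |w^{\sbe_{\V}}(\aldh,t)-w(\alpha,t)|,
\]
and control the first term by the fixed-$\beta$ convergence ``for $h$ sufficiently small relative to $\beh$.'' This is not available in the theorem's setting: the hypothesis is $\max\{\omega_0(|\beh|),|\beh|\}\leq h$, so $\beh$ is forced to be small \emph{relative to} $h$, not the reverse, and $\beh$ and $h$ tend to zero together. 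The fixed-$\beta$ convergence $\wrhb\to w^{\sbe}$ is obtained by the Barles--Perthame half-relaxed-limit method, which is purely qualitative and gives neither a rate nor any uniformity in $\beta$; without such uniformity the first term cannot be shown to vanish along the coupled sequence $(\beh,h)$. Moreover the domain endpoints $a_{\sbe},b_{\sbe}$ drift with $\beta$, so even a standard Crandall--Lions rate for monotone schemes would not apply off the shelf.

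The paper closes this gap by reversing the order of comparison: instead of sending $h\to 0$ at fixed $\beta$ first, it proves a \emph{discrete-level} stability estimate (Lemma \ref{lem:endisbeta}) comparing $\wrhh$ (the $\beta=0$ reduced solution) with $\wrhhbh$ directly, via the resolvent iteration $\left(I-\Delta t\tHrh\right)^{-1}$ and a maximum-principle argument at the grid index where the difference is largest. This yields a modulus $\omega_t(|\beta|,\max_i|k_ih-k_i'h'|,h)$ that is uniform over the Euler iteration and over $h$, after matching the cardinalities of the two meshes (adjusting the mesh size to $\hat h$ with $|\hat h - h|=\mathcal{O}(h^2)$ when the cardinalities differ). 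The theorem then follows from the triangle inequality $|\wrhhbh - w|\leq |\wrhhbh-\wrhh|+|\wrhh-w|$, invoking only the $\beta=0$ convergence of Corollary \ref{cor:1dconverge}. To repair your argument you would need either this kind of uniform-in-$h$ discrete stability in $\beta$, or a quantitative (rate-bearing, $\beta$-uniform) version of your first step; as written, the proposal does not supply either.
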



The proof idea is as follows. Using the nonexpansive property of the resolvent problem (see Propositions \ref{prop_nonexp_d} and \ref{prop:uhproperty}), we obtain estimates for the difference of $\wrhb$ and $\wrh$. In particular, this difference is small when the numbers of grid points in $\tobrh$ and $\torh$ are close, and $r$, $\beta$, and $h$ are sufficiently small. Combining this with the earlier convergence result of $\wrh$ to $w$, we establish the convergence of $\wrhb$ to $w$.

Once the convergence of $\wrhb$ to $w$ is established, we use the variational representation of $w$ and apply Varadhan's inverse lemma to derive the large deviation principle for the rescaled process at a fixed time, as summarized below.

\begin{thm}[LDP on $\overline{\Omega}$, informal summary of Theorem \ref{thm:LDPOmega}]
Let $\vx_0 \in \overline{\Omega}$ and $h>0$. Suppose $\vxzh \to \vx_0$. Then the rescaled process $\cv(t)$ with initial position $\vxzh \in \overline{\Omega}$ satisfies LDP with the associated good rate function  given by
\[
\tilde{I}(\vec{y}; \vx_0, t) := I\!\left(\frac{(\vec{y}-\vx_0)\cdot \vec{\nu}}{|\vec{\nu}|^2}; 0, t\right),
\]
where $I$ is defined as
\[
I(y; \alpha, t) := \inf \left\{\left. \int_0^t \tilde{L}\big(\eta(s), v(s)\big) \, ds \, \right| \, \eta(t) = y,\; (\eta, v, l) \in \mathrm{SP}(\alpha) \right\}.
\]
Here, $\tilde{L}$ is the Legendre transform of $\tilde{H}$, and $\mathrm{SP}(\alpha)$ denotes the family of solutions to the Skorokhod problem (see \eqref{eqn:1dpath}).
\end{thm}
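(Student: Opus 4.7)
The plan is to derive the LDP via Varadhan's inverse (Bryc's) lemma, supplied with the log-Laplace convergence from Theorem \ref{thm:convergelbeta}. The starting point is the standard WKB identity that recasts the discrete backward semigroup as a logarithmic moment generating function: for every $u_0 \in C_b(\overline{\Omega})$ and every starting point $\vxv \in \Omegah$,
\[
\uh(\vxv, t) = h \log \bE\!\left[\exp\!\left(u_0(\cv(t))/h\right)\right], \qquad \cv(0) = \vxv.
\]
Applying Theorem \ref{thm:convergelbeta} at $\alpha = 0$ with $\rh, \beh \to 0$ yields, for every fixed $t > 0$ and every bounded continuous $u_0$,
\[
\lim_{h \to 0^+} h \log \bE\!\left[\exp\!\left(u_0(\cv(t))/h\right)\right] = w(0, t),
\]
where $w$ is the unique solution of \eqref{HJE_NM} with $w_0(\alpha) = u_0(\vx_0 + \alpha \vec{\nu})$.

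Next, observe that $\cv(t) \in \overline{\Omega}$ almost surely and $\overline{\Omega}$ is compact, so the family $\{\cv(t)\}_{h > 0}$ is automatically exponentially tight. Bryc's lemma therefore produces an LDP on $\overline{\Omega}$ with good rate function given by the Fenchel dual
\[
\tilde{I}(\vec{y}; \vx_0, t) = \sup_{u_0 \in C_b(\overline{\Omega})} \left[ u_0(\vec{y}) - w(0, t) \right].
\]
To match the stated formula, I would insert the Lax-Oleinik variational representation of $w$ (the continuous counterpart of Proposition \ref{prop:variformuh}),
\[
w(0, t) = \sup \left\{ w_0(\eta(t)) - \int_0^t \tilde{L}(\eta(s), v(s))\, ds : (\eta, v, l) \in \mathrm{SP}(0) \right\},
\]
into the Fenchel dual and exchange $\sup_{u_0}$ with the inner infimum over paths.

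After the minimax swap, the inner supremum over $u_0$ evaluates to $+\infty$ unless $\vec{y} = \vx_0 + \eta(t)\vec{\nu}$, so only reachable endpoints contribute, giving
\[
\tilde{I}(\vec{y}; \vx_0, t) = \inf \left\{ \int_0^t \tilde{L}(\eta(s), v(s))\, ds : \vec{y} = \vx_0 + \eta(t)\vec{\nu},\; (\eta, v, l) \in \mathrm{SP}(0) \right\},
\]
which is exactly $I(\alpha; 0, t)$ at $\alpha = (\vec{y} - \vx_0)\cdot\vec{\nu}/|\vec{\nu}|^2$. Off the line $\vx_0 + \bR \vec{\nu}$ the rate function is $+\infty$, reflecting the geometric fact that a single reaction moves compositions only along $\vec{\nu}$. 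Goodness (compact sublevel sets) follows from convexity and lower semicontinuity of $\tilde{L}$ in $v$, boundedness of $\eta$ forced by the Skorokhod reflection, and compactness of $\mathrm{SP}(0)$ under uniform convergence.

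The main obstacle lies in two technical steps. First, Theorem \ref{thm:convergelbeta} allows $\vxzh$ to sit slightly off the line through $\vx_0$, and Bryc's lemma requires the log-Laplace limit to depend only on the limiting point $\vx_0$; this independence of the approach is obtained by combining the $L^\infty$-contraction estimates in Propositions \ref{prop_nonexp_d} and \ref{prop:uhproperty} with the uniform continuity of $u_0$ on $\overline{\Omega}$. Second, the minimax swap converting $\sup_{u_0} \inf_\eta$ into $\inf_\eta \sup_{u_0}$ must be justified rigorously; here I would exploit convexity of the action in $v$, lower semicontinuity in $\eta$, and the fact that the reflection measure $l$ contributes zero cost by construction of the Skorokhod problem \eqref{eqn:1dpath}, so that the infimum is attained on admissible triples and the exchange is legitimate. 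With these two ingredients in place, the identification of $\tilde{I}$ with $I$ composed with the affine reparametrization is immediate.
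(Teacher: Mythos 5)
Your proposal follows essentially the same route as the paper's proof of Theorem \ref{thm:LDPOmega}: the WKB identity $\uh(\vxzh,t)=h\log\bE^{\vxzh}\bigl[e^{u_0(\cv(t))/h}\bigr]$, the convergence $\wrhhbh(\rh+\alpha_0^{\V},t)\to w(0,t)$ from Theorem \ref{thm:convergelbeta}, exponential tightness, and Bryc's inverse Varadhan lemma combined with the Skorokhod variational representation of $w(0,t)$ to identify the rate function (including the value $+\infty$ off $S_0$). The only cosmetic difference is that you extract $\tilde I$ from the Fenchel dual via a minimax swap, whereas the paper observes that the representation $w(0,t)=\sup_{\alpha\in[a,b]}\{w_0(\alpha)-I(\alpha;0,t)\}$ is already in the dual form $\sup_{\vec y}\{u_0(\vec y)-\tilde I(\vec y;\vx_0,t)\}$ required by the Varadhan--Bryc characterization, so no swap is needed.
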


\subsection{Organization of this paper}
In Section \ref{sec:WKBdistHJE}, we present the WKB reformulation of the backward equation for the rescaled process $\cv$, yielding a discrete state-constraint HJE, and provide a variational representation for its solution. Section \ref{sec:restrict1d} focuses on the case of a single chemical reaction, where the starting point $\vxzh$ of $\cv$ lies in the maximal closed line segment $S_0$ containing the limiting starting point $\vx_0$, oriented along the reaction vector $\vec{\nu}$ representing the net molecular number change. We show that the discrete backward HJE with state constraints on $S_0$ converges to the continuous HJE on $S_0$ with a Neumann boundary condition. In Section \ref{sec:1dLDP}, we establish the large deviation principle for $\cv$ with $\vxzh \in S_0$ at any fixed time. Finally, in Section \ref{sec:ubetaconvergence}, we extend both the convergence result and the large deviation principle to the rescaled process $\cv$ for any single chemical reaction with starting point $\vxzh \in \overline{\Omega}$.

\subsection{Notations}

We provide the following table for the notations used throughout this paper.

\begin{table}[H]
\centering
\begin{tabular}{ll}
\toprule
\textbf{Symbol} & \textbf{Description} \\
\midrule
$\Omega$ & Domain of the problem \\
$h$ & Grid size (inverse of container size) \\
$\Omega_h = \left\{\vec{i}h : \vec{i} \in \mathbb{N}^N \right\} \cap\overline{\Omega}$ & Discrete domain of grid points \\
$\vec{\nu}_j \in \mathbb{Z}^N$ & Net change vector of the molecular counts for the $j$-th reaction \\
$\Phi_j^\pm(\vec{x})$ & Reaction intensity function of the $j$-th reaction \\
$\uh$ & WKB-transformed solution to the discrete backward HJE \\
$\Hh$& Hamiltonian in the equation of $\uh$\\
$u$ & Limiting continuous solution as $h \to 0$ \\
$\vx_0 \in \mathbb{R}^N$ & Limiting initial position vector \\
$[a, b] \subset\mathbb{R}$ & The largest interval for which $\left\{\vec{x}_0 + \alpha \vec{\nu}: a\leq \alpha \leq b\right\} \subset\overline{\Omega}$\\
$S_0$ & $S_0 = \{\vec{x}_0 + \alpha \vec{\nu} : a \leq \alpha \leq b \}$ \\
$\horh$ & $\horh=\left\{\vec{x}_0 +(r+\alpha) \vec{\nu} \in S_0: \alpha =kh, k  \in \mathbb{Z}\right\}$\\
$\torh$ & $\torh =\{r+\aldh:\aldh = kh, k \in \mathbb{Z}, \vx_0 +(r+\aldh) \vec{\nu} \in S_0\}$\\
$\wrh:\torh \times [0,\infty) \to \mathbb{R}$ & $\wrh \left(r+\aldh, t\right)= \uh(\vec{x}_0+(r+\aldh) \vec{\nu} , t)$\\
$\tHrh$ & Hamiltonian in the equation of $\wrh$\\
$\overline{w}$ & Upper semicontinuous envelope of $\wrh$\\
$\underline{w}$ & Lower semicontinuous envelope of $\wrh$\\
$w :[a, b] \times [0, \infty) \to \mathbb{R}$& Limit of $\wrh$\\
$\tilde{H}$& Hamiltonian in the equation of $w$\\
$[a_{\sbe}, b_{\sbe}] \subset\mathbb{R}$ & The largest interval for which $\left\{\vec{x}_0 +\beta\vec{\nu}_{\perp}+ \alpha \vec{\nu}: a_{\sbe}\leq \alpha \leq b_{\sbe}\right\} \subset\overline{\Omega}$\\
$S_{\sbe}$ & $S_{\sbe}=\left\{\vec{x}_0 +\beta\vec{\nu}_{\perp}+ \alpha \vec{\nu}: a_{\sbe}\leq \alpha \leq b_{\sbe}\right\}$\\
$\hobrh$ & $\hobrh=\left\{\vec{x}_0 +\beta \vec{\nu}_\perp+(r+\alpha) \vec{\nu} \in S_{\sbe}: \alpha =kh, k  \in \mathbb{Z}\right\}$\\
$\tobrh$&$\tobrh=\{r+\aldh: \aldh=kh, \vec{x}_0 +\beta \vec{\nu}_\perp+(r+\aldh) \vec{\nu} \in S_{\sbe}\}$\\
$\wrhb:\tobrh \times [0,\infty) \to \mathbb{R}$ & $\wrhb \left(r+\aldh, t\right) = \uh(\vec{x}_0+\beta \vec{\nu}_\perp +(r+\aldh) \vec{\nu} , t)$\\
$\tHrhb$ & Hamiltonian in the equation of $\wrhb$\\
$\overline{w}^{\sbe}$ & Upper semicontinuous envelope of $\wrhb$ \\
$\underline{w}^{\sbe}$ & Lower semicontinuous envelope of $\wrhb$ \\
$w^{\sbe} : \left[a_{\sbe}, b_{\sbe}\right] \times [0, \infty) \to \mathbb{R}$ & Limit of $\wrhb$ for fixed $\beta>0$\\
$\tilde{H}_{\sbe}$ & Hamiltonian in the equation of $w^{\sbe}$\\
\bottomrule
\end{tabular}
\caption{Table of notations used throughout the paper.}
\label{tab:notation}
\end{table}

\section{WKB reformulation and discrete HJE with state constraints}\label{sec:WKBdistHJE}

Chemical reactions involving $\ell=1,\cdots,N$ species $X_\ell$ and $j=1,\cdots,M$ reactions can be kinematically described as
\begin{equation}\label{CRCR}
\text{Reaction }j: \quad \sum_{\ell} \nu_{j\ell}^+ X_{\ell} \quad \ce{<=>[k_j^+][k_j^-]} \quad   \sum_i \nu_{j\ell}^- X_\ell,
\end{equation}
where the nonnegative integers $\nu_{j\ell}^\pm \geq 0$ are stoichiometric coefficients and $k_j^\pm\geq 0$ are the reaction rates for the $j$-th forward and backward reactions. The column vector $\vec{\nu}_j:= \vec{\nu}_j^- - \vec{\nu}_j^+ := \bbs{\nu_{j\ell}^- - \nu_{j\ell}^+}_{\ell=1:N}\in \bZ^N$ is called the reaction vector for the $j$-th reaction, counting the net change in molecular numbers for species $X_\ell$. Let $\mathbb{N}$ be the set of natural numbers including zero, which serves as the state space for the counting process $X_\ell(t)$, representing the number of each species $\ell=1,\cdots,N$ in the biochemical reactions. In this paper, all vectors $\vec{X}= \bbs{X_i}_{i=1:N} \in \mathbb{N}^N$ and $ \bbs{k_j}_{j=1:M}\in \bR^M$ are column vectors.

With the reaction container size $1/h\gg 1$, the process $\cv_\ell(t)=hX_\ell(t)$ satisfies the random time-changed Poisson representation for chemical reactions \eqref{Csde} (see \cite{kurtz1980representations, Kurtz15}):
\begin{equation} \label{Csde}
\begin{aligned}
\cv(t) = \cv(0) + \sum_{j=1}^M  \vec{\nu}_{j} h \Bigg(  Y^+_j  \left(\frac{1}{h}\int_0^t {\Phi}^+_j(\cv(s))\ud s\right)
- Y^-_j  \left(\frac{1}{h}\int_0^t {\Phi}^-_j(\cv(s))\ud s\right)\Bigg).
\end{aligned}
\end{equation}
Here, for the $j$-th reaction channel, $Y^\pm_{j}(t)$ are i.i.d. unit-rate Poisson processes, and $\Phi_j^\pm(\cv)$ is the  intensity function. One example of $\Phi_j^\pm(\cv)$ is given by the macroscopic law of mass action (LMA):
\begin{equation}\label{lma}
\Phi^\pm_j(\vx) = k^\pm_j  \prod_{\ell=1}^{N} \bbs{x_\ell}^{\nu^\pm_{j\ell}}.
\end{equation}

\subsection{Physical domain and state constrained backward equation}
Due to the physical meaning of the counting process, the natural ambient domain for a chemical reaction is the lattice with non-negative constraint
\begin{equation}
\cv \in   \{ \vxi = \vec{i} h; \vec{i} \in \mathbb{N}^N \}.
\end{equation}

Additional physical constraints may arise from conservation laws. In a closed system, for example, consider the conservation of mass in a chemical reaction. Denote $\vec{m}=(m_\ell)_{\ell=1:N}$, where $m_\ell$ represents the molecular weight for the $\ell$-th species. Then the conservation of mass for the $j$-th reaction in \eqref{CRCR} implies
\begin{equation}\label{mb_j}
\vec{\nu}_j \cdot \vec{m} = 0, \quad j=1,\cdots, M.
\end{equation}
Thus we have the conservation law for the states
$\sum_\ell m_\ell \cv_\ell(t) = \text{const},$
and  consider domain
\begin{equation}
      \{ \vxi = \vec{i} h; \,\,  \vec{i} \in \mathbb{N}^N,\,  \vxv \cdot \vec{m} = \text{const}\}.
\end{equation}

In an open system, where materials exchange with the environment (denoted $\emptyset$), the exchange reaction $X_{\ell}\ce{<=>} \emptyset$ does not conserve mass. But we still restrict the states in a bounded domain due to the capacity of the environment. For example, in  open systems, one
can constrain the amount of a chemostat species that is  transferred into or out  of the reaction system.

Moreover, certain chemical reactions must satisfy strict compositional constraints to ensure both stability and precision in manufacturing. For example, in the epitaxial growth process, maintaining an exact ratio of the constituent elements is essential to achieve the desired crystal structure and material quality.

These physical considerations naturally motivate the study of the forward and backward equations, as well as the associated discrete HJE, within a bounded domain $\Omega_{\V}$ with state-constraint conditions; details are provided in the next section.

\subsubsection{Working domain}
From now on, we consider a generic bounded open  domain
$\Omega \subset \bR^N_+:=   \{x\in \bR^N:\, x_i > 0 \text{ for all } i\}$ such that $\overline{\Omega} \subset \bR^N_+$. We also choose the discrete domain for the rescaled chemical reaction process as
\begin{equation}\label{eqn:defomegah}
\Omegah := \left\{ \vxi = \vec{i} h; \,\,  \vec{i} \in \mathbb{N}^N,\, \vxi \in \overline{\Omega}\right\}.
\end{equation}

We point out that  $\Omegah \to \Omega$ as $h\to 0^+$ in the sense that $\forall \vx\in \Omega$, there exists $\vxv\in \Omegah$ such that $\vxv\to \vx.$
Moreover,  from LMA \eqref{lma}, there exists $c_0>0$ such that
\begin{equation}\label{eq:lower}
   \Phi_j^\pm(x) \geq c_0>0, \quad \forall x\in \overline{\Omega}.
\end{equation}

\subsubsection{Forward and backward equation in $\Omegah$}
In $\Omegah$, we construct the state-constraint process \eqref{CRCR} such that
the chemical reactions only happen within domain $\Omegah$. For $\vxi\in \Omegah$, let $\zh(\vxv,t)$  satisfy the corresponding backward equation (see {\cite[Appendix]{GL22}} for the derivation of  the generator $Q$)
\begin{equation}
\begin{aligned}
\partial_t \zh(\vxi,t) =&(Q \zh)(\vxi,t)\\
:=& \frac{1}{h}\sum_{j=1, \vxv+ \vec{\nu}_{j}h\in\Omegah}^M \Phi^+_j(\vxv)\left[ \zh(\vxv+ \vec{\nu}_{j}h)  - \zh(\vxv) \right] \\
 &+ \frac{1}{h}\sum_{j=1, \vxv- \vec{\nu}_{j}h\in \Omegah}^M \Phi_j^-(\vxv)\left[ \zh(\vxv- \vec{\nu}_{j}h )-\zh(\vxv) \right], \quad \vxv \in \Omegah.
\end{aligned}
\end{equation}

For any $f\in C_b(\Omegah)$, the solution to the backward equation, with the initial condition $\zh(\vxv, 0)= f(\vxv)$ for all $\vxv \in \Omegah$, can be expressed as
\begin{equation}
\zh(\vxv, t) = \mathbb{E}^{\vxv}[f(\cv_t)].
\end{equation}

Denote the time marginal law of $\cv(t)$ as $p_{\V}(\vxv,t) = \bE(\mathbbm{1}_{\vxv}(\cv(t)))$, where $\mathbbm{1}_{\vxv}$ is the indicator function. Then $p_{\V}(\vxv,t)$, for $\vxi\in \Omegah$, satisfies  the forward   equation   \cite{Kurtz15, GL22}:
\begin{equation}\label{rp_eq}
\begin{aligned}
\frac{\ud}{\ud t} p_{\V}(\vxi, t) =& \frac{1}{h}\sum_{j=1, \vxi- \vec{\nu}_{j} h\in \Omegah}^M  \left( \Phi^+_j(\vxi- \vec{\nu}_{j} h) p_{\V}(\vxi- \vec{\nu}_{j} h,t) - \Phi^-_j(\vxi) p_{\V}(\vxi,t) \right) \\
& + \frac{1}{h}\sum_{j=1, \vxi+\vec{\nu}_{j}h\in \Omegah}^M \left( \Phi^-_j(\vxi+\vec{\nu}_{j}h) p_{\V}(\vxi+\vec{\nu}_{j}h,t) - \Phi_j^+(\vxi) p_{\V}(\vxi,t) \right), \quad \text{for } \vxi\in \Omegah.
\end{aligned}
\end{equation}
It is easy to check that we have the total mass conservation in $\Omegah$.
\begin{lem}\label{lem_mass}
We have the conservation of total probability in $\Omegah$:
$$
    \frac{\ud}{ \ud t} \sum_{\vxv \in \Omegah} p_{\V}(\vxi, t) = 0.
    $$
\end{lem}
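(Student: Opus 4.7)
The plan is a direct computation: sum the forward equation \eqref{rp_eq} over $\vxi \in \Omegah$, interchange the order of summation, and pair up gain/loss terms via a shift of the summation variable. The essential point is that every allowed transition appears exactly twice in the total sum — once as a loss of probability from its starting state and once as a gain at its target state — and the state-constraint indicator conditions for these two appearances coincide.

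More concretely, I would proceed as follows. First, write
\begin{equation*}
\frac{\ud}{\ud t}\sum_{\vxi \in \Omegah} p_{\V}(\vxi,t)
= \frac{1}{h}\bigl(A^+_{\text{gain}} - A^-_{\text{loss}}\bigr)
+ \frac{1}{h}\bigl(A^-_{\text{gain}} - A^+_{\text{loss}}\bigr),
\end{equation*}
where $A^+_{\text{gain}} = \sum_{\vxi}\sum_{j:\vxi - \vec{\nu}_j h \in \Omegah} \Phi^+_j(\vxi - \vec{\nu}_j h) p_{\V}(\vxi - \vec{\nu}_j h, t)$ and $A^+_{\text{loss}} = \sum_{\vxi}\sum_{j:\vxi + \vec{\nu}_j h \in \Omegah} \Phi^+_j(\vxi) p_{\V}(\vxi, t)$, with $A^-_{\text{gain}}$ and $A^-_{\text{loss}}$ defined analogously.

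Next, in $A^+_{\text{gain}}$ I would substitute $\vec{y} = \vxi - \vec{\nu}_j h$; the outer constraint $\vxi \in \Omegah$ becomes $\vec{y} + \vec{\nu}_j h \in \Omegah$ and the inner constraint $\vxi - \vec{\nu}_j h \in \Omegah$ becomes $\vec{y} \in \Omegah$. After relabeling, $A^+_{\text{gain}}$ becomes exactly $A^+_{\text{loss}}$, so the two cancel. The identical shift $\vec{y} = \vxi + \vec{\nu}_j h$ applied to $A^-_{\text{gain}}$ identifies it with $A^-_{\text{loss}}$. Summing everything up yields zero, which is the desired conservation law.

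I do not anticipate a serious obstacle — the only thing to be careful about is verifying that the pair of state-constraint indicators $\mathbbm{1}_{\{\vxi - \vec{\nu}_j h \in \Omegah\}}\mathbbm{1}_{\{\vxi \in \Omegah\}}$ is symmetric under the substitution $\vxi \leftrightarrow \vxi - \vec{\nu}_j h$, which is manifest. This symmetry is precisely the reason the no-flux (state-constraint) construction of the jump dynamics preserves total mass: reactions that would exit $\Omegah$ are suppressed on both the forward and backward equations simultaneously, so no probability can leak across the boundary.
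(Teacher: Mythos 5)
Your proposal is correct and follows essentially the same route as the paper: sum the forward equation over $\Omegah$, shift the summation variable by $\vec{\nu}_j h$, and use the fact that the pair of state-constraint conditions is symmetric under that shift so the gain and loss terms cancel. The paper performs a single substitution $\vec{y}_i = \vxi + \vec{\nu}_j h$ on the whole second line rather than your four-term bookkeeping, but this is only a cosmetic difference.
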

\begin{proof}
    From \eqref{rp_eq}, we have
    \[
\begin{aligned}
\frac{\ud}{\ud t} \sum_{\vxv \in \Omegah} p_{\V}(\vxi, t) =& \frac{1}{h}\sum_{\vxv \in \Omegah}\sum_{j=1, \vxi- \vec{\nu}_{j} h\in \Omegah}^M  \left( \Phi^+_j(\vxi- \vec{\nu}_{j} h) p_{\V}(\vxi- \vec{\nu}_{j} h,t) - \Phi^-_j(\vxi) p_{\V}(\vxi,t) \right) \\
& + \frac{1}{h}\sum_{\vxv \in \Omegah}\sum_{j=1, \vxi+\vec{\nu}_{j}h\in \Omegah}^M \left( \Phi^-_j(\vxi+\vec{\nu}_{j}h) p_{\V}(\vxi+\vec{\nu}_{j}h,t) - \Phi_j^+(\vxi) p_{\V}(\vxi,t) \right).
\end{aligned}
    \]
By letting $\vec{y}_i=\vxv +\vec{\nu}_jh$ in the second line above, we obtain
\[
\begin{aligned}
\frac{\ud}{\ud t} \sum_{\vxv \in \Omegah} p_{\V}(\vxi, t) =& \frac{1}{h}\sum_{\vxv \in \Omegah}\sum_{j=1, \vxi- \vec{\nu}_{j} h\in \Omegah}^M  \left( \Phi^+_j(\vxi- \vec{\nu}_{j} h) p_{\V}(\vxi- \vec{\nu}_{j} h,t) - \Phi^-_j(\vxi) p_{\V}(\vxi,t) \right) \\
& + \frac{1}{h}\sum_{\vec{y}_i\in \Omegah}\sum_{j=1, \vec{y}_i-\vec{\nu}_jh \in \Omegah}^M  \left( \Phi^-_j(\vec{y}_i) p_{\V}(\vec{y}_i,t) - \Phi_j^+(\vec{y}_i-\vec{\nu}_jh) p_{\V}(\vec{y}_i-\vec{\nu}_jh,t) \right)\\
&=0.
\end{aligned}
\]
\end{proof}

We remark that a direct consequence of Lemma \ref{lem_mass} is the exponential tightness of the process $\cv(t)$ for each $t\in[0,T]$, i.e.,
for any $0<\ell<\8$, there exists a $R_\ell$ and $h_0$ such that,
\begin{equation}\label{eq:exp_tight}
\sup_{t\in[0,T]}\sum_{|\vxi|\geq R_{\ell}, \vxi\in\Omegah} p_{\V}(\vxi,t) \leq e^{-\frac{\ell}{h}}, \quad \forall h\leq h_0.
\end{equation}

\subsection{WKB reformulation in backward equation and discrete Hamiltonian $\Hh$}
WKB expansion is widely used to
characterize the exponential asymptotic behavior. To prove the  LDP rigorously, we make a change of variable in the backward equation with
  $$\uh(\vxi,t):= h\log \zh(\vxi,t),  \quad \vxi\in\Omega_{\V}.$$ Then $\uh$ solves the discrete backward HJE \eqref{upwind0}:
  \[
  \begin{aligned}
\pt_t \uh(\vxv,t) =& \sum_{j=1, \vxv+ \vec{\nu}_{j} h\in \Omegah }^M     \Phi^+_j(\vxv)\bbs{ e^{\frac{\uh\left(\vxv+ \vec{\nu}_{j} h,t\right)-\uh\left(\vxv, t\right)}{h}} - 1} \\
&+ \sum_{j=1,  \vxv- \vec{\nu}_{j}h\in\Omegah}^M \Phi_j^-(\vxv)\bbs{ e^{  \frac{\uh\left(\vxv- \vec{\nu}_{j}h,t\right)-\uh\left(\vxv,t\right)}{h} } - 1}, \quad \vxv\in \Omegah.
\end{aligned}
  \]

Formally, $\uh \to u$ where $u$ is a viscosity solution of the continuous HJE \eqref{eqn:cHJE}, i.e.,
\[
\partial_t u(\vx,t) =  \sum_{j=1}^M \left[ \Phi^+_j(\vec{x})\left(e^{\vec{\nu}_j\cdot \nabla u(\vx, t)}-1\right)     +   \Phi_j^-(\vec{x})\left( e^{-\vec{\nu}_j\cdot \nabla u(\vx, t)}-1\right)  \right] = H(\nabla u(\vx), \vx), \,\,\vx\in\Omega.
\]

\subsubsection{Monotone scheme resulted from the backward equation}

Define the discrete Hamiltonian operator corresponding to the backward equation
\begin{equation}\label{def_H}
\begin{aligned}
\Hh: \ell^{\8}(\Omegah) \to \ell^{\8}(\Omegah); \qquad  ( u(\vxi) )_{\vxi\in\Omegah} \mapsto ( \Hh(\vxi, u(\vxi), u(\cdot)) )_{\vxi\in\Omegah}
\end{aligned}
\end{equation}
\begin{equation}\label{def:distH}
\begin{aligned}
\Hh(\vxi, \uh(\vxi), \varphi):=& \sum_{j=1, \vxv+ \vec{\nu}_{j} h\in \Omegah }^M     \Phi^+_j(\vxv)\bbs{ e^{\frac{\varphi\left(\vxv+ \vec{\nu}_{j} h\right)-\uh\left(\vxv\right)}{h}} - 1} \\
&+ \sum_{j=1,  \vxv- \vec{\nu}_{j}h\in\Omegah}^M \Phi_j^-(\vxv)\bbs{ e^{  \frac{\varphi\left(\vxv- \vec{\nu}_{j}h\right)-\uh\left(\vxv\right)}{h} } - 1}.
\end{aligned}
\end{equation}

It is straightforward to verify that $\Hh$ satiefies the monotone scheme condition, i.e.,
\begin{equation}\label{mono}
\text{if }\,\,  \varphi_1\leq \varphi_2\quad \text{ then }\,\, \Hh(\vx, \uh(\vx), \varphi_1) \leq \Hh(\vx,\uh(\vx), \varphi_2).
\end{equation}

Denote the time step as $\Delta t$. Then the backward Euler time discretization gives
\begin{equation}\label{bEuler}
\frac{\uh^{\sn} -\uh^{\sn-1}}{\Delta t} - \Hh(\vxi, \uh^{\sn}(\vxi), \uh^{\sn})=0.
\end{equation}
Then $\uh^{\sn}=(I-\Delta t \Hh)^{-1} \uh^{\sn-1} =: J_{\Delta t, h} \uh^{\sn-1}$ can be solved as the solution $\uh$ to the discrete resolvent problem with $f=\uh^{\sn-1}$
\begin{equation}\label{dis_resolvent}
\uh(\vxi) - \Delta t \Hh(\vxi, \uh(\vxi), \uh)=f(\vxi).
\end{equation}

We state the existence of a solution $\uh$ to the resolvent problem \eqref{dis_resolvent}. The proof follows essentially the same argument based on Perron's method as in \cite[Lemma 3.2]{GaoLiu2023} and is therefore omitted.

\begin{prop}[{Existence of the resolvent problem \eqref{dis_resolvent}}]\label{prop:exresolvent}
Let $\Delta t >0$ and $f \in \ell^{\8}(\Omegah)$. Then there exists a solution $\uh \in \ell^{\8}(\Omegah)$ to \eqref{dis_resolvent}, and $\uh$ satisfies
\[
\inf_{\vx_j\in \Omegah}f(\vx_j) \leq \uh(\vxv) \leq \sup_{\vx_j\in \Omegah}f(\vx_j),
\]
for all $\vxv \in \Omegah$.
\end{prop}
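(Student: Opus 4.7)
The plan is to follow the Perron method, adapted to the discrete state-constrained setting. The crucial preliminary observation is that $\Omegah$ is a finite set (since $\overline{\Omega}$ is bounded) and that the discrete Hamiltonian annihilates constants: for any $c\in\mathbb{R}$, every summand in $\Hh(\vxi, c, c)$ carries a factor $e^{(c-c)/h}-1=0$, and this holds regardless of which reaction directions are truncated by the state-constraint. Setting $\underline{c}:=\inf_{\Omegah} f$ and $\overline{c}:=\sup_{\Omegah} f$, one checks directly that
\[
\underline{c} - \Delta t\, \Hh(\vxi, \underline{c}, \underline{c}) = \underline{c} \leq f(\vxi), \qquad \overline{c} - \Delta t\, \Hh(\vxi, \overline{c}, \overline{c}) = \overline{c} \geq f(\vxi),
\]
so $\underline{c}$ is a subsolution and $\overline{c}$ a supersolution of \eqref{dis_resolvent}; the $\ell^\infty$ bound in the conclusion will thus be automatic.

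Next, I would define the Perron envelope
\[
\uh(\vxi) := \sup\bigl\{\, v(\vxi) : v\in\ell^\infty(\Omegah),\ \underline{c}\le v\le\overline{c},\ v \text{ is a subsolution of } \eqref{dis_resolvent}\,\bigr\}
\]
and show that $\uh$ is itself a solution. The first step is to verify that the pointwise maximum of two subsolutions is again a subsolution: at any grid point $\vxi$ where $(v\vee w)(\vxi) = v(\vxi)$, the monotonicity condition \eqref{mono} yields
\[
v(\vxi) - \Delta t\, \Hh(\vxi, v(\vxi), v\vee w) \le v(\vxi) - \Delta t\, \Hh(\vxi, v(\vxi), v) \le f(\vxi).
\]
Because $\Omegah$ is finite, a standard diagonal selection of subsolutions realizing the supremum at each grid point, combined with the continuity of $\Hh$ in both the middle and last slots (inherited from the exponential), then shows that $\uh$ itself satisfies the subsolution inequality.

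The supersolution property is obtained by the classical bump argument: if $\uh(\vxi_0) - \Delta t\,\Hh(\vxi_0, \uh(\vxi_0), \uh) < f(\vxi_0)$ at some $\vxi_0\in\Omegah$, then the localized perturbation $\tilde u := \uh + \varepsilon\,\mathbbm{1}_{\vxi_0}$ remains a subsolution for sufficiently small $\varepsilon>0$. Indeed, at $\vxi_0$ continuity of $\Hh$ in its value-slot preserves the strict inequality, while at any $\vx_k\neq \vxi_0$ we have $\tilde u\geq \uh$ with $\tilde u(\vx_k)=\uh(\vx_k)$, so \eqref{mono} gives
\[
\tilde u(\vx_k) - \Delta t\, \Hh(\vx_k, \tilde u(\vx_k), \tilde u) \le \uh(\vx_k) - \Delta t\,\Hh(\vx_k, \uh(\vx_k), \uh) \le f(\vx_k).
\]
This contradicts the maximality of $\uh$, so $\uh$ must also be a supersolution, and is therefore a solution.

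The main subtlety is the simultaneous coupling of $\Hh$ through both the pointwise value $\uh(\vxi)$ (middle slot) and the global profile $\uh$ (last slot): the Perron perturbations have to be strictly localized and the monotonicity \eqref{mono} must be invoked with the correct direction in each slot. The state-constraint truncation causes no additional difficulty, since all arguments are confined to $\Omegah$ and the truncated sums in \eqref{def:distH} inherit both monotonicity and continuity from their unrestricted counterparts.
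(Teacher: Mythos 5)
Your proposal is correct and follows the same route the paper indicates for this statement (Perron's method, as in the cited Lemma~3.2 of \cite{GaoLiu2023}): constants are sub/supersolutions because $\Hh$ annihilates constants, the pointwise maximum of subsolutions is a subsolution by \eqref{mono}, finiteness of $\Omegah$ lets you pass to the envelope, and the localized bump forces the supersolution inequality. The only point you leave implicit is why the perturbation $\tilde u = \uh + \varepsilon\,\mathbbm{1}_{\vxi_0}$ remains in your admissible class, i.e.\ why $\tilde u \leq \overline{c}$; this is immediate once you note that any subsolution $v$ satisfies $\max v \leq \sup f$ (at a maximum point $\vx^{*}$ of $v$ every term of $\Hh(\vx^{*}, v(\vx^{*}), v)$ has a nonpositive factor $e^{(v(\vx_k)-v(\vx^{*}))/h}-1$, so $\Hh \leq 0$ there and $v(\vx^{*}) \leq f(\vx^{*})$), so the upper constraint in the envelope is automatic and the maximality contradiction goes through.
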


Next, we state the nonexpansive property of the solutions to the discrete resolvent problems, which guarantees the uniqueness of the solution to \eqref{dis_resolvent}. The proof is omitted as it follows the same argument as in \cite[Lemma 3.1]{GaoLiu2023}.

\begin{prop}[{Uniqueness of the resolvent problem  \eqref{dis_resolvent}}]\label{prop_nonexp_d}
 Let $f, g \in \ell^{\8}(\Omegah)$. Let $\uh$ and $\vh$ be two solutions satisfying $\uh= J_{\Delta t, h} f$ and $\vh=J_{\Delta t, h} g$. Then we have
\begin{enumerate}[(i)]
\item Monotonicity: if $f\leq g$, then $\uh\leq \vh$;
\item Nonexpansive:
\begin{equation}\label{nonexp_d}
\|\uh-\vh\|_{\8} \leq \|f-g\|_{\8}.
\end{equation}
\end{enumerate}
\end{prop}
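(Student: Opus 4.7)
The plan is to prove part (i) via a discrete maximum principle that exploits the monotonicity property \eqref{mono} of $\Hh$, and then to deduce part (ii) from (i) by exploiting the invariance of $\Hh$ under the simultaneous addition of a common constant to both of its function arguments. This reduces the proposition to two short, essentially standard arguments.

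For (i), I would suppose $f\leq g$ and, seeking a contradiction, assume that $\omega := \uh - \vh$ attains a strictly positive value somewhere. Because $\Omegah$ is finite (it is the intersection of the lattice $h\mathbb{N}^N$ with the bounded set $\overline{\Omega}$), the supremum is attained at some $\vxv^{*}\in\Omegah$ with $M := \omega(\vxv^{*}) = \max_{\vxv\in\Omegah}\omega(\vxv) > 0$. At each admissible neighbor, i.e.\ each $j$ with $\vxv^{*}\pm \vec{\nu}_j h\in\Omegah$, the maximizing property gives $\omega(\vxv^{*}\pm\vec{\nu}_j h)\leq M$, which rearranges into $\uh(\vxv^{*}\pm\vec{\nu}_j h) - \uh(\vxv^{*}) \leq \vh(\vxv^{*}\pm\vec{\nu}_j h) - \vh(\vxv^{*})$. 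Since $\Phi_j^{\pm}\geq 0$ and $x\mapsto e^{x} - 1$ is monotone, a term-by-term comparison in \eqref{def:distH} yields $\Hh(\vxv^{*},\uh(\vxv^{*}),\uh) \leq \Hh(\vxv^{*},\vh(\vxv^{*}),\vh)$. Subtracting the two resolvent equations \eqref{dis_resolvent} evaluated at $\vxv^{*}$ then gives $M = f(\vxv^{*}) - g(\vxv^{*}) + \Delta t\bigl[\Hh(\vxv^{*},\uh(\vxv^{*}),\uh) - \Hh(\vxv^{*},\vh(\vxv^{*}),\vh)\bigr] \leq 0$, which is the desired contradiction.

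For (ii), the key observation is that adding a common constant $K\in\mathbb{R}$ to both functional arguments of $\Hh$ leaves every exponent $\varphi(\vxv\pm\vec{\nu}_j h) - \uh(\vxv)$ unchanged, so $\Hh(\vxv,\uh(\vxv)+K,\varphi+K) = \Hh(\vxv,\uh(\vxv),\varphi)$. Consequently, if $\vh = J_{\Delta t,h} g$, then $\vh + K$ solves \eqref{dis_resolvent} with right-hand side $g+K$, i.e.\ $J_{\Delta t,h}(g+K) = \vh + K$. Taking $K := \|f-g\|_{\8}$ gives $f\leq g+K$, so by part (i) just established, $\uh\leq \vh + K$; swapping the roles of $f$ and $g$ yields the reverse inequality, and together they prove \eqref{nonexp_d}.

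I do not expect a serious obstacle here: the only point that requires minor attention is the state-constraint restriction on the index set in $\Hh$, but this is handled automatically because for a fixed base point $\vxv^{*}$ the admissible index set $\{j: \vxv^{*}\pm\vec{\nu}_j h\in\Omegah\}$ is identical in both $\Hh(\vxv^{*},\uh(\vxv^{*}),\uh)$ and $\Hh(\vxv^{*},\vh(\vxv^{*}),\vh)$, so the term-by-term comparison proceeds without modification.
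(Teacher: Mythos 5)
Your proof is correct. The paper omits its own proof of this proposition (deferring to \cite[Lemma 3.1]{GaoLiu2023}), but your argument --- a discrete maximum principle at a point where $\uh-\vh$ attains its maximum on the finite grid $\Omegah$ for (i), followed by the invariance of $\Hh$ under adding a common constant to both arguments to deduce (ii) with $K=\|f-g\|_{\8}$ --- is precisely the standard route that reference takes, and you correctly note the one point needing care, namely that the admissible neighbor set in \eqref{def:distH} depends only on the base point and hence is the same for both solutions.
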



The property (ii) shows the resolvent $J_{\Delta t, h} = (I - \Delta t \Hh)^{-1}$ is nonexpansive. Thus we also obtain $-\Hh$ is an accretive operator on $\ell^\8.$

The following is an elementary property of the solution $\uh$, which follows directly from \cite[Lemma 1.2]{CrandallLiggett}. We include a direct proof below, as it is straightforward in our setting.

\begin{prop}\label{prop:uhproperty}
 Let $f \in \ell^{\8}(\Omegah)$ and let $\uh$ be the solution satisfying $\uh= J_{\Delta t, h} f$. Then
\[
\left\|\Hh \uh\right\|_{\ell^\8 \left(\Omegah\right)}= \frac{\left\|\uh-f\right\|_{\ell^\8 \left(\Omegah\right)}}{\Delta t} \leq \left\|\Hh f\right\|_{\ell^\8 \left(\Omegah\right)}.
\]
\end{prop}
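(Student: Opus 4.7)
The plan is to split the assertion into the equality $\|\Hh\uh\|_{\ell^\infty(\Omegah)} = \|\uh - f\|_{\ell^\infty(\Omegah)}/\Delta t$ and the inequality $\|\uh - f\|_{\ell^\infty(\Omegah)}/\Delta t \leq \|\Hh f\|_{\ell^\infty(\Omegah)}$, and to treat them one after the other.

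First, I would obtain the equality directly from the resolvent equation \eqref{dis_resolvent}: rearranging it pointwise gives
\[
\Delta t\, \Hh(\vxi, \uh(\vxi), \uh) = \uh(\vxi) - f(\vxi), \qquad \vxi \in \Omegah,
\]
and taking $\ell^\infty(\Omegah)$ norms on both sides yields the identity. This step is purely algebraic and uses nothing beyond the defining equation of $\uh = J_{\Delta t, h} f$.

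For the inequality, my idea is to reinterpret $f$ itself as the image of the resolvent applied to a carefully chosen element. Define
\[
g(\vxi) := f(\vxi) - \Delta t\, \Hh(\vxi, f(\vxi), f), \qquad \vxi \in \Omegah.
\]
Since $f \in \ell^\infty(\Omegah)$ and the intensities $\Phi_j^\pm$ are bounded on $\overline{\Omega}$, the quantity $\Hh f$ belongs to $\ell^\infty(\Omegah)$ (the bound depends on $h$ through the exponentials, but that is immaterial here), so $g \in \ell^\infty(\Omegah)$. By construction $f$ solves the resolvent problem \eqref{dis_resolvent} with data $g$, i.e.\ $f = J_{\Delta t, h}\, g$. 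Applying the nonexpansive estimate of Proposition \ref{prop_nonexp_d}(ii) to the pair $(g, f)$ then gives
\[
\|f - \uh\|_{\ell^\infty(\Omegah)} = \|J_{\Delta t, h} g - J_{\Delta t, h} f\|_{\ell^\infty(\Omegah)} \leq \|g - f\|_{\ell^\infty(\Omegah)} = \Delta t\, \|\Hh f\|_{\ell^\infty(\Omegah)},
\]
and dividing by $\Delta t$ and chaining with the identity from the previous paragraph concludes the argument.

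The only point that deserves a line of verification is that $\Hh f$ is a bona fide element of $\ell^\infty(\Omegah)$, so that $g$ can legitimately be fed into the resolvent; this is immediate from the boundedness of $f$ on $\Omegah$ and of $\Phi_j^\pm$ on $\overline{\Omega}$. Otherwise there is no serious obstacle: the statement is a standard consequence of the nonexpansive/accretive structure of $J_{\Delta t, h}$, packaged in the Crandall--Liggett style already cited by the authors.
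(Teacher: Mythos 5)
Your proposal is correct and follows essentially the same route as the paper: the equality is read off pointwise from the resolvent equation, and the inequality is obtained by observing that $f = J_{\Delta t, h}\bigl((I - \Delta t \Hh)f\bigr)$ and applying the nonexpansive estimate of Proposition \ref{prop_nonexp_d} to the pair of data $f$ and $(I-\Delta t \Hh)f$. Your extra remark that $\Hh f \in \ell^\infty(\Omegah)$ is a harmless (and slightly more careful) addition to what the paper leaves implicit.
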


\begin{proof}
From Proposition \ref{prop_nonexp_d}, we have
\[
\begin{aligned}
 \left\|\uh-f\right\|_{\ell^\8 \left(\Omegah\right)}  & = \left\| (I - \Delta t \Hh)^{-1}f- (I - \Delta t \Hh)^{-1}(I - \Delta t \Hh)f\right\|_{\ell^\8 \left(\Omegah\right)} \\
 & \leq \left\|f-(I - \Delta t \Hh)f\right\|_{\ell^\8 \left(\Omegah\right)}\\
 &=\Delta t \left\|\Hh f\right\|_{\ell^\8 \left(\Omegah\right)}.
\end{aligned}
\]
Since $\displaystyle \Hh \uh=\frac{\uh-f}{\Delta t}$, hence the conclusion follows.
\end{proof}

\subsection{Existence and uniqueness of discrete HJE with state constraints}

We establish the well-posedness of the solutions $\uh$ to the discrete Hamilton--Jacobi equations \eqref{upwind0}, as well as some of their properties, in the following proposition. The proof proceeds similarly to that of \cite[Theorem 3.4]{GaoLiu2023}.

\begin{prop}[Well-posedness of the discrete HJE \eqref{upwind0}]\label{prop:propdis}
Let $h>0$ and $u^0 \in \ell^\infty(\Omegah)$. Then there exists a unique solution $\uh \in C\left([0, \infty);\ell^\infty \left(\Omegah\right)\right)$ to \eqref{upwind0} with the initial condition $\uh(\vxv,0) = u^0(\vxv)$ for all $\vxv \in \Omegah$, given by
\[
\uh(\cdot, t)=\lim_{\Delta t \to 0} (I-\Delta t \Hh)^{-\lfloor\frac{t}{\Delta t} \rfloor} u^0,
\]
where $\lfloor\frac{t}{\Delta t} \rfloor$ is the greatest integer less than or equal to $\frac{t}{\Delta t}$. Moreover, $\uh$ satisfies the following properties.
\begin{itemize}
    \item[(i)]  Suppose $\tuh \in C\left([0, \infty);\ell^\infty \left(\Omegah\right)\right)$ is the solution to \eqref{upwind0} with the initial value $\tuh(\vxv, 0) =\tilde{u}^0(\vxv)$. Then
    \[
    \left\|\uh(\cdot, t) -\tuh(\cdot, t)\right\|_{ l^\infty(\Omegah)} \leq \left\|u^0-\tilde{u}^0\right\|_{ l^\infty(\Omegah)}.
    \]
    \item[(ii)] Contraction:
    \begin{equation} \label{eqn:discontra}
        \inf_{\vxv \in \Omegah}u^0(\vxv) \leq \inf_{\vxv \in \Omegah} \uh(\vxv, t) \leq \sup_{\vxv \in \Omegah} \uh(\vxv, t) \leq \sup_{\vxv \in \Omegah}u^0(\vxv).
    \end{equation}
    \item [(iii)]$\uh\in C^1\left([0, \infty);\ell^\infty \left(\Omegah\right)\right)$.
\end{itemize}
\end{prop}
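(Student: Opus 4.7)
The plan is to combine Crandall--Liggett nonlinear semigroup theory with finite-dimensional ODE arguments, exploiting the fact that $\Omegah$ is a \emph{finite} set (since $\overline{\Omega}$ is bounded, $\Omegah$ consists of finitely many lattice points), so that $\ell^{\infty}(\Omegah)$ is actually finite-dimensional. This simplification is what lets us produce a classical $C^{1}$ solution for any $u^{0}\in\ell^{\infty}(\Omegah)$, not just for initial data in some restricted domain.

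First, for the existence via the exponential formula, Propositions~\ref{prop:exresolvent} and \ref{prop_nonexp_d} show that for every $\Delta t > 0$ the resolvent $J_{\Delta t, h} = (I - \Delta t \Hh)^{-1}$ is well-defined on all of $\ell^{\infty}(\Omegah)$ and is nonexpansive. Thus $-\Hh$ is $m$-accretive on $\ell^{\infty}(\Omegah)$, and the classical Crandall--Liggett theorem yields that
\[
\uh(\cdot, t) := \lim_{\Delta t \to 0^+} (I - \Delta t \Hh)^{-\lfloor t/\Delta t \rfloor} u^{0}
\]
exists uniformly on compact time intervals and defines a strongly continuous, nonexpansive semigroup $\{S(t)\}_{t \geq 0}$ on $\ell^{\infty}(\Omegah)$. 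Property~(i) is then immediate from nonexpansiveness, while property~(ii) follows by iterating the pointwise bound of Proposition~\ref{prop:exresolvent} (since $\inf u^{0}\leq J_{\Delta t, h}u^{0}\leq \sup u^{0}$ pointwise, and iterating preserves this sandwich) and passing to the limit $\Delta t \to 0^{+}$.

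To upgrade this mild solution to a classical one and obtain~(iii), I would invoke finite-dimensional ODE theory. The vector field $\varphi\mapsto\bigl(\Hh(\vxv, \varphi(\vxv), \varphi)\bigr)_{\vxv\in\Omegah}$ defined by \eqref{def:distH} is a smooth (indeed analytic) map on the finite-dimensional space $\ell^{\infty}(\Omegah)$, being a finite sum of exponentials of affine combinations of the coordinates of $\varphi$. Hence Picard--Lindel\"of produces a unique local $C^{\infty}$ classical solution to the ODE system \eqref{upwind0}; by uniqueness, this classical solution coincides with the Crandall--Liggett mild solution on its interval of existence, and the a priori bound~\eqref{eqn:discontra} from~(ii) prevents blow-up, so the solution extends globally in time. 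Since smoothness of the vector field gives $C^{\infty}$ time regularity, property~(iii) follows.

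The main (mild) subtlety is verifying that the exponential formula actually reproduces the classical solution; in an abstract Banach-space setting one would need $u^{0}$ in the domain of $\Hh$, but here finite-dimensionality bypasses this issue. A self-contained alternative, once the classical solution is in hand, is a direct maximum principle for~(ii): at a spatial maximizer $\vxv^{\ast}$ of $\uh(\cdot, t)$ (which exists since $\Omegah$ is finite), every term on the right-hand side of \eqref{upwind0} is nonpositive, because $\uh(\vxv^{\ast} \pm \vec{\nu}_{j} h, t) - \uh(\vxv^{\ast}, t) \leq 0$ whenever the neighbor lies in $\Omegah$ and $\Phi_{j}^{\pm} \geq 0$ by \eqref{eq:lower}; hence $\partial_{t}\uh(\vxv^{\ast}, t) \leq 0$, and a Danskin-type envelope argument shows that $t \mapsto \sup_{\vxv}\uh(\vxv, t)$ is nonincreasing, with the symmetric argument for the infimum. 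Either route completes the proof, and the former is what was done in \cite[Theorem 3.4]{GaoLiu2023}.
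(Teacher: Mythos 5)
Your proposal is correct, and for the existence part it follows the same skeleton as the paper: both invoke Propositions \ref{prop:exresolvent} and \ref{prop_nonexp_d} to get that $-\Hh$ is $m$-accretive and then apply Crandall--Liggett, with (i) an immediate consequence of nonexpansiveness. The differences are in (ii) and (iii). For (ii) you iterate the pointwise sandwich $\inf u^0 \le J_{\Delta t,h}u^0 \le \sup u^0$ and pass to the limit, whereas the paper deduces it from (i); both are one-line arguments and equally valid. For (iii) the routes genuinely diverge: the paper bootstraps regularity directly from the mild solution, using the integral identity \eqref{eqn:disintergral} together with the Lipschitz estimate \eqref{eqn:Hhdifest} on $\Hh$ over the invariant $\ell^\infty$-ball to show the difference quotient converges to $\Hh\uh$; you instead exploit finite-dimensionality of $\ell^\infty(\Omegah)$ to run Picard--Lindel\"of on the smooth vector field, and then identify the classical ODE solution with the Crandall--Liggett limit. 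Your route buys $C^\infty$ (indeed analytic) time regularity rather than just $C^1$, and your direct maximum-principle argument for (ii) is a nice self-contained alternative; the cost is that the phrase ``by uniqueness, this classical solution coincides with the mild solution'' hides the one step that actually needs a line of proof --- e.g., the standard consistency-plus-stability argument: a classical solution $v$ satisfies $v(n\Delta t)=J_{\Delta t,h}\bigl(v((n-1)\Delta t)+O(\Delta t^2)\bigr)$, so nonexpansiveness of $J_{\Delta t,h}$ gives $\|v(n\Delta t)-J_{\Delta t,h}^{\,n}u^0\|_{\ell^\infty(\Omegah)}=O(\Delta t)$ on compact time intervals, identifying $v$ with the exponential-formula limit. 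With that line supplied, your argument is complete and arguably cleaner than the paper's direct computation.
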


\begin{proof}
From Proposition \ref{prop:exresolvent} and Proposition \ref{prop_nonexp_d}, we have that $-\Hh$ is a maximal accretive operator on $l^\infty(\Omegah)$. Then, by Crandall and Liggett \cite{CrandallLiggett}, there exists a unique global contraction $C^0$-semigroup solution $\uh (t, \vxv) \in C\left([0,\infty);l^\infty(\Omegah)\right)$ to \eqref{upwind0}, given by
\[
\uh(\cdot,t)=\lim_{\Delta t \to 0} (I-\Delta t \Hh)^{-\lfloor\frac{t}{\Delta t} \rfloor} u^0.
\]
For any $x\in \vxv$ and $t, s \in [0, \infty)$, we have
\begin{equation}\label{eqn:disintergral}
\uh(\vxv,t)-\uh(\vxv,s)=\int_s^t \Hh \left(\vxv,\uh( \vxv,\tau),\uh(\tau)\right)\,d\tau.
\end{equation}
Hence, $\uh(\vxv, \cdot) \in C^1[0, \infty)$.
 \begin{itemize}
     \item[(i)] From the nonexpansive property of $J_{\Delta t,h}$ in Proposition \ref{prop_nonexp_d}, we have
\[
\left\| (I - \Delta t \Hh)^{-1}u^0- (I - \Delta t \Hh)^{-1}\tilde{u}^0\right\|_{l^\infty(\Omegah)} \leq \|u^0-\tilde{u}^0\|_{l^\infty(\Omegah)},
\]
which implies
\[
\begin{aligned}
\left\|\uh(\cdot,t) -\tuh(\cdot,t)\right\|_{l^\infty(\Omegah)}&=\left\|\lim_{\Delta t\to 0} (I-\Delta t \Hh)^{-\lfloor\frac{t}{\Delta t} \rfloor} u^0 -\lim_{\Delta t\to 0} (I-\Delta t \Hh)^{-\lfloor\frac{t}{\Delta t} \rfloor} \tilde{u}^0 \right\|_{l^\infty(\Omegah)}\\
&\leq \|u^0-\tilde{u}^0\|_{l^\infty(\Omegah)}.
\end{aligned}
\]
\item[(ii)] The contraction \eqref{eqn:discontra} follows directly from (i).
\item[(iii)] From (ii) and \eqref{eqn:disintergral},
we deduce that for any $x\in \vxv$ and $t, s \in [0, \infty)$
\begin{equation} \label{eqn:distbd_h}
\left\|\uh(\cdot,t) -\uh(\cdot,s)\right\|_{l^\infty(\Omegah)}\leq |t-s| C,
\end{equation}
where $C=C(h,\|u^0\|_{l^\infty(\Omegah)}, \Omega, \Phi^+,\Phi^-)>0$.
Moreover, we have the estimate
\begin{equation}\label{eqn:Hhdifest}
\begin{aligned}
&\left|\Hh\left(\vxv,\uh(\vxv,s), \uh(s)\right)-\Hh\left(\vxv,\uh(\vxv,t), \uh(t)\right)\right|\\
\leq &\sum_{j=1, \vxv+ \vec{\nu}_{j} h\in \Omegah }^M     \Phi^+_j(\vxv) \left| e^{\frac{\uh\left(\vxv+ \vec{\nu}_{j} h,s\right)-\uh\left(\vxv, s\right)}{h}} - e^{\frac{\uh\left(\vxv+ \vec{\nu}_{j} h,t\right)-\uh\left(\vxv, t\right)}{h}}\right| \\&\qquad \qquad \qquad + \sum_{j=1,  \vxv- \vec{\nu}_{j}h\in\Omegah}^M \Phi_j^-(\vxv)\left| e^{\frac{\uh\left(\vxv- \vec{\nu}_{j} h,s\right)-\uh\left(\vxv, s\right)}{h}} - e^{\frac{\uh\left(\vxv- \vec{\nu}_{j} h,t\right)-\uh\left(\vxv, t\right)}{h}}\right|\\
\leq&C \left\|\uh(\cdot, s)-\uh(\cdot, t)\right\|_{l^\infty(\Omegah)}\\
\leq &C|t-s|,
\end{aligned}
\end{equation}
for some constant $C=C(h,\|u^0\|_{l^\infty(\Omegah)}, M, \Omega, \Phi^+,\Phi^-)>0$. The second inequality follows from (ii) and the continuity of $\Phi^+, \Phi^-$, while the last inequality follows from \eqref{eqn:distbd_h}.

Now, fix $t > 0$ and let $\tau \in \mathbb{R}$ be sufficiently small. For any $\vxv \in \Omegah$, we have
\[
\begin{aligned}
&\left|\frac{\uh\left(\vxv, t+\tau\right)-\uh\left(\vxv, t\right)}{\tau}-\Hh\left(\vxv,\uh(\vxv,t), \uh(t)\right)\right|\\
\leq &\frac{1}{|\tau|}\left|\int_t^{t+\tau}\left(\Hh\left(\vxv,\uh(\vxv,s), \uh(s)\right)-\Hh\left(\vxv,\uh(\vxv,t), \uh(t)\right)\right)\, ds \right|\\
\leq &C|\tau|,
\end{aligned}
\]
where the last inequality follows from \eqref{eqn:Hhdifest}, and $C = C(h, \left|u^0\right|_{l^\infty(\Omegah)}, M, \Omega, \Phi^+, \Phi^-) > 0$.

Therefore, $\uh \in C^1\left((0, \infty); \ell^\infty(\Omegah)\right)$. The case $t = 0$ follows similarly by taking $\tau > 0$, completing the proof.
 \end{itemize}
\end{proof}

The following proposition provides a uniform bound on the time derivative independent of $h$ when the initial condition for all $h$ is given by a function $u_0\in W^{1,\infty} (\Omega)\cap C(\overline{\Omega})$. This result is a direct application of \cite[Lemma 2.2]{brezis_pazy_1970} in our setting.
\begin{prop}\label{prop:unibduhpt}
Assume either $\partial \Omega$ is Lipschitz continuous or $\Omega$ is convex.
Let $u_0 \in W^{1,\infty} (\Omega)\cap C(\overline{\Omega})$. For $h>0$, let $\uh \in C^1\left([0, \infty);\ell^\infty \left(\Omegah\right)\right)$ be the unique solution to \eqref{upwind0} with the initial condition $\uh(\vxv,0) = u_0(\vxv)$ for all $\vxv \in \Omegah$. Then there exists a constant $C_0=C_0 \left(\left\| Du_0\right\|_{L^\infty(\Omega)}, M, \Omega, \Phi^+, \Phi^-\right)>0$ independent of $\vxv$, $t$, and $h$, such that
\[\left|\pt_t \uh\left(\vxv,t\right)\right| \leq C_0
\]
for all $h>0$, $\vxv \in \Omegah$, and $t\geq 0$.
\end{prop}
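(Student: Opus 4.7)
The plan is as follows. Since $\pt_t \uh(\vxv,t) = \Hh\left(\vxv, \uh(\vxv,t), \uh(\cdot,t)\right)$ by Proposition \ref{prop:propdis}, it suffices to bound $\|\Hh \uh(\cdot,t)\|_{\ell^\infty(\Omegah)}$ uniformly in $t$ and $h$. The strategy is to produce a uniform-in-$h$ bound $\|\Hh u_0\|_{\ell^\infty(\Omegah)} \leq C_0$ from the Lipschitz regularity of $u_0$, propagate this bound along the backward Euler iteration using Proposition \ref{prop:uhproperty}, and then pass to the limit $\Delta t \to 0^+$ in the Crandall--Liggett semigroup formula. This is exactly the abstract pattern of \cite[Lemma 2.2]{brezis_pazy_1970} specialized to the accretive operator $-\Hh$ on $\ell^\infty(\Omegah)$.

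The main step is controlling $\|\Hh u_0\|_{\ell^\infty(\Omegah)}$ independently of $h$. Under either of the geometric hypotheses on $\pt\Omega$, the function $u_0$ is Lipschitz on $\overline{\Omega}$ with a Lipschitz constant $L$ depending only on $\|Du_0\|_{L^\infty(\Omega)}$ and the geometry of $\Omega$ (with $L = \|Du_0\|_{L^\infty(\Omega)}$ in the convex case, and $L \leq C(\Omega)\|Du_0\|_{L^\infty(\Omega)}$ in the Lipschitz case). Hence, for every $\vxv \in \Omegah$ and every index $j$ with $\vxv \pm \vec{\nu}_j h \in \Omegah$, both arguments lie in $\overline{\Omega}$ and
\[
\left|\frac{u_0(\vxv \pm \vec{\nu}_j h) - u_0(\vxv)}{h}\right| \leq L\, |\vec{\nu}_j| =: K_j,
\]
so $\left|e^{\pm(u_0(\vxv \pm \vec{\nu}_j h) - u_0(\vxv))/h} - 1\right| \leq e^{K_j} - 1$. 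Combined with the uniform boundedness of $\Phi_j^\pm$ on the compact set $\overline{\Omega}$, this yields a constant
\[
C_0 = C_0\!\left(\|Du_0\|_{L^\infty(\Omega)}, M, \Omega, \Phi^+, \Phi^-\right),
\]
independent of $h$, such that $\|\Hh u_0\|_{\ell^\infty(\Omegah)} \leq C_0$.

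To conclude, I apply Proposition \ref{prop:uhproperty} inductively along the resolvent chain $\uh^n = J_{\Delta t, h}\, \uh^{n-1}$ with $\uh^0 = u_0$: for every $n \geq 1$,
\[
\|\Hh \uh^n\|_{\ell^\infty(\Omegah)} \leq \|\Hh \uh^{n-1}\|_{\ell^\infty(\Omegah)} \leq \cdots \leq \|\Hh u_0\|_{\ell^\infty(\Omegah)} \leq C_0,
\]
which equivalently says $\|\uh^n - \uh^{n-1}\|_{\ell^\infty(\Omegah)} \leq C_0\,\Delta t$. Letting $\Delta t \to 0^+$ with $n = \lfloor t/\Delta t\rfloor$ in the Crandall--Liggett formula of Proposition \ref{prop:propdis}, and invoking the $C^1$ regularity of $\uh$ in $t$ together with the identity $\pt_t \uh = \Hh\uh$, one obtains $\|\pt_t \uh(\cdot,t)\|_{\ell^\infty(\Omegah)} \leq C_0$ for every $t \geq 0$. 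The only genuinely delicate point is extracting a uniform Lipschitz constant for $u_0$ on $\overline{\Omega}$ from the hypothesis $u_0 \in W^{1,\infty}(\Omega) \cap C(\overline{\Omega})$, which is precisely why the geometric assumption on $\pt \Omega$ is imposed; everything else is a direct combination of the previously established contraction and resolvent estimates.
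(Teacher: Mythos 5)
Your proposal is correct and follows essentially the same route as the paper: both reduce the claim to the uniform-in-$h$ bound $\|\Hh u_0\|_{\ell^\infty(\Omegah)}\leq C_0$ coming from the Lipschitz difference quotients of $u_0$ (which is where the geometric hypothesis on $\Omega$ enters), and then invoke the accretivity machinery to propagate this bound in time. The only difference is that the paper cites \cite[Lemma 2.2]{brezis_pazy_1970} for the time-propagation step, whereas you unpack that lemma explicitly via the resolvent chain and Proposition \ref{prop:uhproperty}, which is a faithful reconstruction of the same argument.
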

\begin{proof}
Let $h>0$, $\vxv \in \Omegah$, and $t>0$. Since $u_0 \in W^{1,\infty} (\Omega)\cap C(\overline{\Omega})$, and either $\partial \Omega$ is Lipschitz continuous or $\Omega$ is convex, it follows that for any $j \in \left\{1, 2, \cdots, M\right\}$ such that $\vxv
+\vec{\nu}_{j} h\in \Omegah$, we have
\begin{equation}\label{eqn:ubdu0j+}
\left|\frac{u_0\left(\vxv+ \vec{\nu}_{j} h\right)-u_0\left(\vxv\right)}{h}\right| \leq C,
\end{equation}
for some constant $C=C\left(\left\|Du\right\|_{L^\infty(\Omega)}, \partial \Omega\right)>0$. Similarly, for any $j \in \left\{1, 2, \cdots, M\right\}$ such that $\vxv
-\vec{\nu}_{j} h\in \Omegah$, we also have
\begin{equation}\label{eqn:ubdu0j-}
  \left|\frac{u_0\left(\vxv-\vec{\nu}_{j} h\right)-u_0\left(\vxv\right)}{h}\right| \leq C,
\end{equation}
with the same constant $C$ as in \eqref{eqn:ubdu0j+}.
By \cite[Lemma 2.2]{brezis_pazy_1970}, we obtain the following estimate
\[
\begin{aligned}
\left\|\pt_t \uh\left(\cdot,t\right)\right\|_{\ell^\8 \left(\Omegah\right)} &\leq \left|\Hh u_0\right|\\
&=\left| \sum_{j=1, \vxv+ \vec{\nu}_{j} h\in \Omegah }^M     \Phi^+_j(\vxv)\bbs{ e^{\frac{u_0\left(\vxv+ \vec{\nu}_{j} h\right)-u_0\left(\vxv\right)}{h}} - 1} \right.\\
&\left. \qquad \qquad \qquad\quad + \sum_{j=1,  \vxv- \vec{\nu}_{j}h\in\Omegah}^M \Phi_j^-(\vxv)\bbs{ e^{  \frac{u_0\left(\vxv- \vec{\nu}_{j}h\right)-u_0\left(\vxv\right)}{h} } - 1}\right|\\
&\leq C_0,
\end{aligned}
\]
for some constant $C_0=C_0(\left\| Du_0\right\|_{L^\infty(\Omega)}, M, \Omega, \Phi^+, \Phi^-) >0$, where the last equality follows  from the bounds in \eqref{eqn:ubdu0j+}–\eqref{eqn:ubdu0j-} and the continuity of $\Phi^+, \Phi^-$. Thus, the results holds for any $h>0$, $\vxv \in \Omegah$, and $t>0$. The case $t=0$ follows from the fact that $\uh(\vxv, \cdot) \in C^1[0, \infty)$.
\end{proof}

\subsection{Variational solution representation for the discrete HJE}
In this section, we present a solution representation for the discrete HJE \eqref{upwind0}, formulated as an optimal control problem subject to a state constraint.

\subsubsection{Variational lemmas}
Let $h>0$ and $\vxv \in \Omegah$, where $i \in \Gamma_{\V}$ for some finite index set $\Gamma_{\V}$.
Define $\Hh:\Omegah \times \mathbb{R}^{\left|\Gamma_{\V}\right|} \to \mathbb{R}$ by
\begin{equation}
\begin{aligned}
    \Hh \left(\vec{x}_i, \vec{\phi}\right):&= \sum_{j=1, \,\vxv+ \vec{\nu}_{j} h\in \Omegah }^M     \Phi^+_j(\vxv)\bbs{ e^{\frac{\phi(\vxv+ \vec{\nu}_{j} h)-\phi(\vxv)}{h}} - 1} + \sum_{j=1,\,  \vxv- \vec{\nu}_{j}h\in\Omegah}^M \Phi_j^-(\vxv)\bbs{ e^{  \frac{\phi(\vxv- \vec{\nu}_{j}h)-\phi(\vxv)}{h} } - 1}\\
    &=\sum_{k\in \Gamma_{\V}} \Phi\left(\vxv, \vec{x}_k\right)\left(e^\frac{\phi\left(\vec{x}_k\right)-\phi\left(\vxv\right)}{h}-1\right),
\end{aligned}
\end{equation}
where $\Phi:\Omegah \times \Omegah \to \mathbb{R}$ is defined by
\begin{equation}\label{Phi_ik}
    \Phi\left(\vxv, \vec{x}_k\right):= \left\{\begin{aligned}&\Phi^+_{j(k)}\left(\vxv\right), \qquad \qquad \qquad \text{ if } \vec{x}_k=\vxv+\vec{\nu}_jh \text{ for some } j \in \{1, \cdots, M\},\\
    &\Phi^-_{j(k)}\left(\vxv\right), \qquad \qquad \qquad \text{ if } \vec{x}_k=\vxv-\vec{\nu}_jh \text{ for some } j \in \{1, \cdots, M\},\\
    &0, \qquad \qquad \qquad \qquad \quad \, \, \, \,\text{ if } k \neq i \text{ and } \vec{x}_k \notin \left\{\vxv \pm \vec{\nu}_jh: j= 1, \cdots, M \right\},\\
    & -\sum_{l\in\Gamma_{\V}, \vec{x}_l \in \left\{\vxv \pm \vec{\nu}_jh: j= 1, \cdots, M \right\}} \Phi(\vxv, \vec{x}_l), \qquad \text{ if } k = i.
    \end{aligned}
    \right.
\end{equation}

We then calculate the associated Lagrangian $\Lh: \Omegah \times \mathbb{R}^{|\Gamma_{\V}|} \to \mathbb{R}$ through
\begin{equation}\label{Legen}
    \Lh \left(\vxv, \vec{s} \right):= \sup_{\vec{\phi}} \left\{\vec{s} \cdot \vec{\phi} - \Hh \left(\vxv, \vec{\phi}\right)\right\}.
\end{equation}

If $\sum_{k\in \Gamma_{\V}} s_k \neq 0$, it can be seen that $\Lh(\vxv,\vec{s})=+\8$ due to $\Hh(\vxv,  \text{constant vector})=0.$ If $\sum_{k\in \Gamma_{\V}} s_k=0$ but there exists $s_{k_0}<0$ for some $k_0 \neq i$, we can demonstrate $\Lh(\vxv,\vec{s})=+\8$ by taking $\phi_{k}=0$ for $k \neq k_0$ and $\phi_{k_0}=-\8$.
As a consequence, for finite $\Lh$, we must have
$\sum_{k\in \Gamma_{\V}} s_k = 0$ and $s_k\geq 0$ for any $k\neq i$.

 Note that $s_i = -\sum_{k\in\Gamma_{\V}, k\neq i } s_k$. Let $\tilde{\phi} \in \mathbb{R}^{|\Gamma_{\V}|}$ and we differentiate with respect to $\eps$ in $g(\eps) :=\vec{s} \cdot(\vec{\phi}+\eps \tilde{\phi})-H(x, \vec{\phi}+\eps \tilde{\phi})$ and set $g^\prime(0)=0$, resulting in
\begin{equation}\label{tm3.6}
    \sum_{k\in\Gamma_{\V} }  s_k  \left(\tilde{\phi}_k-\tilde {\phi}_i\right) = \sum_{k\in\Gamma_{\V} }    \frac{1}{h}\Phi(\vxv, \vec{x}_k) e^{\frac{\phi_k-\phi_i}{h}} \left(\tilde{\phi}_k-\tilde{\phi}_i \right).
\end{equation}
Then
\[
\sum_{k\in\Gamma_{\V} }     \left(\frac{1}{h}\Phi(\vxv, \vec{x}_k) e^{\frac{\phi_k-\phi_i}{h}}  - s_k \right) \left(\tilde{\phi}_k-\tilde{\phi}_i \right)=0.
\]

So the optimal $\vec{\phi}$ which achieves the Legendre transformation \eqref{Legen} satisfies
\begin{equation}\label{ss}
s\left(\vxv, \vec{x}_k\right) := s_k=\left\{\begin{aligned}
   &\frac{1}{h}\Phi(\vxv, \vec{x}_k) e^{\frac{\phi_k-\phi_i}{h}}, \qquad \qquad \qquad \, \text{ if }\, \vec{x}_k \in \left\{\vxv \pm \vec{\nu}_jh: j= 1, \cdots, M \right\}, \\
&0, \qquad \qquad \qquad  \qquad \qquad \qquad \quad \, \text{ if } k \neq i \text{ and } \vec{x}_k \notin \left\{\vxv \pm \vec{\nu}_jh: j= 1, \cdots, M \right\}, \\
& -\sum_{l\in\Gamma_{\V}, \vec{x}_l \in \left\{\vxv \pm \vec{\nu}_jh: j= 1, \cdots, M \right\}} s(\vxv, \vec{x}_l), \quad \text{ if } k=i.\\
\end{aligned}
\right.
\end{equation}

Upon substituting this optimal $\vec{\phi}$ into the Legendre transformation at $\vxv \in\Omegah$, and for $\sum_j s_j=0$, we have
\begin{equation}
\begin{aligned}
    \Lh(\vxv, \vec{s}) = &  \sum_{k\in\Gamma_{\V}, \, \vec{x}_k \in \left\{\vxv \pm \vec{\nu}_jh: j= 1, \cdots, M \right\}} \left(\frac{1}{h}\Phi(\vxv, \vec{x}_k) e^{\frac{\phi_k-\phi_i}{h}} \left(\phi_k-\phi_i \right) -\Phi(\vxv, \vec{x}_k)\left(e^{\frac{\phi_k-\phi_i}{h}} -1\right) \right) \\
    =&\sum_{k\in\Gamma_{\V}, \, \vec{x}_k \in \left\{\vxv \pm \vec{\nu}_jh: j= 1, \cdots, M \right\}} \Phi(\vxv, \vec{x}_k)\left(h\frac{s\left(\vxv, \vec{x}_k\right)}{\Phi(\vxv, \vec{x}_k)} \log\left(h\frac{s\left(\vxv, \vec{x}_k\right)}{\Phi(\vxv, \vec{x}_k)}\right) -h\frac{s\left(\vxv, \vec{x}_k\right)}{\Phi(\vxv, \vec{x}_k)} +1 \right)\\
    =& \sum_{j=1,\, \vxv+ \vec{\nu}_{j} h\in \Omegah }^M \Phi^+_j(\vxv) \left(h\frac{s\left(\vxv, \vxv+\vec{\nu}_jh\right)}{\Phi^+_j(\vxv)} \log\left(h\frac{s\left(\vxv, \vxv+\vec{\nu}_jh\right)}{\Phi^+_j(\vxv)}\right) -h\frac{s\left(\vxv, \vxv+\vec{\nu}_jh\right)}{\Phi^+_j(\vxv)}+1\right)  \\
    &+\sum_{j=1,\, \vxv- \vec{\nu}_j h\in \Omegah }^M \Phi^-_j(\vxv) \left(h\frac{s\left(\vxv,\vxv- \vec{\nu}_j h\right)}{\Phi^-_j(\vxv)} \log\left(h\frac{s\left(\vxv, \vxv- \vec{\nu}_j h\right)}{\Phi^-_j(\vxv)}\right) -h\frac{s\left(\vxv, \vxv- \vec{\nu}_j h\right)}{\Phi^-_j(\vxv)}+1\right)\\
    \geq & 0.
\end{aligned}
\end{equation}
Redefine $\Lh$ in terms of the control velocity $\vec{v} \in \mathbb{R}^{|\Gamma_{\V}|}$ where
\begin{equation}\label{eqn:v_tphi}
v(\vxv, \vec{x}_k) = \left\{\begin{aligned}
&\frac{s\left(\vxv, \vec{x}_k\right)}{\frac{1}{h}\Phi(\vxv, \vec{x}_k)}= e^\frac{\phi_k-\phi_i}{h}, \quad \qquad \qquad \qquad \qquad \qquad \qquad \qquad \quad \,  \text{ if }\vec{x}_{k} \in \left\{\vxv \pm \vec{\nu}_jh: j= 1, \cdots, M \right\}, \\
& \frac{s(\vxv, \vxv)}{\frac{1}{h}\Phi(\vxv, \vec{x}_i)}=\frac{\sum_{l\in\Gamma_{\V}, \vec{x}_l \in \left\{\vxv \pm \vec{\nu}_jh: j= 1, \cdots, M \right\}}\Phi\left(\vxv, \vec{x}_l\right)e^\frac{\phi_l-\phi_i}{h}}{\sum_{l\in\Gamma_{\V}, \vec{x}_l \in \left\{\vxv \pm \vec{\nu}_jh: j= 1, \cdots, M \right\}} \Phi(\vxv, \vec{x}_l)}, \quad   \text{ if }  k=i,\\
& 0, \qquad \qquad \qquad \qquad  \qquad \qquad \qquad \qquad \qquad \qquad \qquad \qquad \, \,\,\text{ otherwise},
\end{aligned}
\right.
\end{equation}
that is, $s(\vxv, \vec{x}_k)=\frac{1}{h}v\left(\vxv, \vec{x}_k\right) \Phi(\vxv, \vec{x}_k)$. Denote the new function by $\tilde{L}_{\V}:\Omegah \times \mathbb{R}^{|\Gamma_{\V}|}\to \mathbb{R}$ and we have
\begin{equation}
\begin{aligned}
    \tilde{L}_{\V}(\vxv, \vec{v}) =& \Lh(\vxv, \vec{s}) \\
    = &\sum_{k\in\Gamma_{\V}, \, \vec{x}_k \in \left\{\vxv \pm \vec{\nu}_jh: j= 1, \cdots, M \right\}} \Phi(\vxv, \vec{x}_k)\left(v\left(\vxv, \vec{x}_k\right)\log\left(v\left(\vxv, \vec{x}_k\right)\right) -v\left(\vxv, \vec{x}_k\right) +1 \right)\\
    =& \sum_{j=1,\, \vxv+ \vec{\nu}_{j} h\in \Omegah }^M \Phi^+_j(\vxv) \left(v\left(\vxv, \vxv+\vec{\nu}_jh\right) \log\left(v\left(\vxv,  \vxv+\vec{\nu}_jh\right)\right) -v\left(\vxv,  \vxv+\vec{\nu}_jh\right)+1\right)  \\
    &+\sum_{j=1,\, \vxv- \vec{\nu}_{j} h\in \Omegah }^M \Phi^+_j(\vxv) \left(v\left(\vxv, \vxv-\vec{\nu}_jh\right) \log\left(v\left(\vxv,  \vxv-\vec{\nu}_jh\right)\right) -v\left(\vxv,  \vxv-\vec{\nu}_jh\right)+1\right)\\
    \geq &0.
\end{aligned}
\end{equation}

\begin{prop}\label{prop:variformuh}
    Let $\uh$ be a classical solution to \eqref{upwind0} with initial data $\uh(\vec{x}_l, 0)=f(\vec{x}_l)$ for any $\vec{x}_l \in \Omegah$ with $l \in \Gamma_{\V}$. Then $\uh(\vec{x}_l, t)$ has the following representation formula
    \begin{equation}\label{oc}
    \begin{aligned}
   \uh\left(\vec{x}_l, t\right) = \max_{\vec{v},\vec{p} } & \left\{\sum_{i \in \Gamma_{\V}}f(\vxv) p_t(\vxv) - \int_0^t \sum_{i\in \Gamma_{\V}} \tilde{L}_{\V}(\vxv, \vec{v}_{t-s}) p_{t-s}(\vxv)  \ud s\right\},\\
    \text{s.t.} \,\,   \dot{p}_s(\vxv) =& \sum_{k\in \Gamma_{\V}} \left(\frac{1}{h}v_{s}(\vec{x}_k, \vxv) \Phi \left(\vec{x}_k,\vxv\right) p_s(\vec{x}_k) - \frac{1}{h} v_{s}(\vxv, \vec{x}_k) \Phi \left(\vxv,\vec{x}_k\right) p_s(\vec{x}_i)\right), \vxv \in \Omegah,\\
   & p_{s=0}(\vxv) = \delta_{il}, \quad  \sum_{\vxv\in \Omegah^c} p(\vxv,t)\equiv 0,
   \\ &v_s(\vxv, \vec{x}_k) \geq 0 \text{ for } k\neq i, \quad \sum_{k\in \Gamma_{\V}}v_s(\vxv, \vec{x}_k) \Phi(\vxv, \vec{x}_k)=0 \text{ for any } s \in [0, t].
    \end{aligned}
\end{equation}
Here $\Phi(\cdot,\cdot)$ is defined in \eqref{Phi_ik}.
\end{prop}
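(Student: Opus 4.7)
The plan is a verification-type argument grounded in the Legendre duality \eqref{Legen} between $\Hh$ and $\Lh$, transcribed into the velocity variable $\vec v$ through \eqref{eqn:v_tphi}. Fix $\vec{x}_l\in\Omegah$ and $t>0$, and for any admissible pair $(\vec v,\vec p)$ satisfying the continuity equation, the constraint $\sum_k v_s(\vxv,\vec{x}_k)\Phi(\vxv,\vec{x}_k)=0$, and $v_s\ge 0$ off-diagonal, I will introduce the pairing
\[
\Psi(s):=\sum_{i\in\Gamma_{\V}}\uh(\vxv,\,t-s)\,p_s(\vxv),\qquad s\in[0,t],
\]
so that $\Psi(0)=\uh(\vec{x}_l,t)$ (by $p_0=\delta_{il}$) and $\Psi(t)=\sum_i f(\vxv)p_t(\vxv)$. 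By Proposition \ref{prop:propdis}(iii) and the linear structure of the ODE for $p_s$ on the finite set $\Omegah$, $\Psi\in C^1[0,t]$, so the claim reduces to controlling the sign and equality case of $\Psi(t)-\Psi(0)$.

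First I would prove the inequality $\max\{\cdots\}\le\uh(\vec{x}_l,t)$. Differentiating $\Psi$ in $s$, substituting the HJE \eqref{upwind0} for $\partial_t\uh$ and the prescribed continuity equation for $\dot p_s$, and re-indexing $i\leftrightarrow k$ in the incoming-flux term should yield
\[
\frac{d\Psi}{ds}=\sum_{i\in\Gamma_{\V}}p_s(\vxv)\left\{\sum_{k\in\Gamma_{\V}}\frac{v_s(\vxv,\vec{x}_k)\Phi(\vxv,\vec{x}_k)}{h}\bigl[\uh(\vec{x}_k,t-s)-\uh(\vxv,t-s)\bigr]-\Hh\bigl(\vxv,\uh(\vxv,t-s),\uh(\cdot,t-s)\bigr)\right\}.
\]
Setting $s_k:=h^{-1}v_s(\vxv,\vec{x}_k)\Phi(\vxv,\vec{x}_k)$, the zero-flux constraint gives $\sum_k s_k=0$, so the bracketed expression equals $\vec s\cdot\uh(\cdot,t-s)-\Hh$, which by \eqref{Legen} is dominated by $\Lh(\vxv,\vec s)=\tilde L_{\V}(\vxv,\vec v_s)$. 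Integrating over $[0,t]$ and using $\int_0^t g(s)\,ds=\int_0^t g(t-s)\,ds$ produces the upper bound.

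Next I would exhibit a saturating feedback. Motivated by the first-order optimality condition \eqref{ss}--\eqref{eqn:v_tphi}, define
\[
v^*_s(\vxv,\vec{x}_k):=\exp\!\left(\frac{\uh(\vec{x}_k,t-s)-\uh(\vxv,t-s)}{h}\right),\qquad \vec{x}_k\in\{\vxv\pm\vec{\nu}_j h:j=1,\dots,M\}\cap\Omegah,
\]
with the diagonal entry $v^*_s(\vxv,\vxv)$ dictated by \eqref{eqn:v_tphi}, and solve the forward equation for the associated $p^*_s$ with $p^*_0=\delta_{il}$. Since $\uh$ is $C^1$ in time and $\Omegah$ is finite, $v^*_s$ is continuous and uniformly bounded on $[0,t]$, hence $p^*_s$ exists and is unique; non-negativity follows from $v^*\ge 0$, a computation mimicking Lemma \ref{lem_mass} preserves $\sum_i p^*_s(\vxv)\equiv 1$, and since $\Phi(\vxv,\vec{x}_k)$ vanishes whenever $\vec{x}_k\notin\Omegah$ by \eqref{Phi_ik}, $p^*$ stays supported in $\Omegah$, so the state-constraint condition $\sum_{\vxv\in\Omegah^c}p^*(\vxv,\cdot)\equiv 0$ is automatic. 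At this feedback the Legendre inequality is saturated pointwise, by the very construction of \eqref{ss}--\eqref{eqn:v_tphi}, so the previous upper bound becomes equality, identifying $\uh(\vec{x}_l,t)$ as the optimal value.

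The delicate point I anticipate is the state-constraint bookkeeping between the diagonal entry $v^*_s(\vxv,\vxv)$ of \eqref{eqn:v_tphi} and the truncated stencil in \eqref{def:distH} when $\vxv$ lies near $\partial\Omega$ and some reaction neighbors $\vxv\pm\vec{\nu}_j h$ fall outside $\Omegah$. This is resolved cleanly by the convention \eqref{Phi_ik}, which zeros out absent neighbors, so identical truncated sums appear on both sides of the duality and the Neumann-type state-constraint boundary condition is inherited automatically from the discrete Hamiltonian; the remaining steps are standard linear ODE theory on a finite set.
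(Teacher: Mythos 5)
Your proposal is correct and follows essentially the same route as the paper's proof: the pairing $\Psi(s)=\sum_i\uh(\vxv,t-s)p_s(\vxv)$ is exactly the quantity the paper differentiates (written there as $\frac{d}{ds}\bigl(\uh(\vxv,s)p_{t-s}(\vxv)\bigr)$), the index swap $i\leftrightarrow k$, the reduction to $\vec s\cdot\vec\phi-\Hh\le\Lh$ via \eqref{Legen}, and the saturating feedback from \eqref{eqn:v_tphi} all coincide with the paper's argument. Your added checks on well-posedness, positivity, and mass conservation of $p^*$ are sensible refinements the paper leaves implicit.
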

\begin{rem}
    By the definitions of $\vec{v}_s$ and $\Phi$, we write the forward equation in detail
    \begin{equation}
    \begin{aligned}
     \dot{p}_s(\vxv)=&\sum_{k\in \Gamma_{\V}, \, \vec{x}_k \in \left\{\vxv \pm \vec{\nu}_jh: j= 1, \cdots, M \right\} }\left(\frac{1}{h}v_{s}(\vec{x}_k, \vxv) \Phi \left(\vec{x}_k,\vxv\right) p_s(\vec{x}_k) - \frac{1}{h}v_{s}(\vxv, \vec{x}_k) \Phi \left(\vxv,\vec{x}_k\right) p_s(\vec{x}_i)\right)\\
     =&\sum_{j=1}^M \left(\frac{1}{h}v_{s}(\vxv+\vec{\nu}_jh, \vxv) \Phi^+_i\left(\vxv+\vec{\nu}_jh\right) p_s(\vxv+\vec{\nu}_jh) - \frac{1}{h}v_s(\vxv, \vxv+\vec{\nu}_jh)\Phi^+_j\left(\vxv\right)p_s(\vxv)\right)\\
   &+ \sum_{j=1}^{M} \left(\frac{1}{h}v_{s}(\vxv-\vec{\nu}_jh, \vxv) \Phi^-_i\left(\vxv-\vec{\nu}_jh\right) p_s(\vxv-\vec{\nu}_jh) - \frac{1}{h}v_s(\vxv, \vxv-\vec{\nu}_jh)\Phi^-_j\left(\vxv\right)p_s(\vxv)\right).
    \end{aligned}
    \end{equation}
\end{rem}
\begin{proof}
Let $\gamma:\Omegah \times [0, \infty) \to \mathbb{R}$ be defined by the representation formula given on the right hand side of \eqref{oc}. We will first show $\uh \geq \gamma$ and then argue the quality is actually achieved.

Let $\vxv \in \Omegah$ and $\tau \in [0, t]$. For any control velocity $\vec{v}_\tau$ and probability density $p_\tau$ that satisfy the conditions in \eqref{oc}, define a vector $\vec{s}_\tau(\vxv) \in \mathbb{R}^{|\Gamma_{\V}|}$ where the $k$th component of $\vec{s}_\tau(\vxv)$ is $s_\tau(\vxv, \vec{x}_k):=\frac{1}{h}v_\tau\left(\vxv, \vec{x}_k\right) \Phi(\vxv, \vec{x}_k)$. Note that $\sum_{k\in \Gamma_{\V}} s_\tau( \vxv, \vec{x}_k)=0$ and $s_\tau (\vxv, \vec{x}_k) \geq 0$ for $k\neq i$ from \eqref{oc} and \eqref{Phi_ik}. Hence, from the definition of $\Lh$, we have
\begin{equation}\label{eqn:runningcost}
\begin{aligned}
&\int_0^t \sum_{i\in \Gamma_{\V}} \tilde{L}_{\V}(\vxv, \vec{v}_{t-s}) p_{t-s}(\vxv)  \ud s
\\ =&\int_0^t \sum_{i\in \Gamma_{\V}} \Lh(\vxv, \vec{s}_{t-\tau}) p_{t-\tau}(\vxv)  \ud \tau\\
\geq &\int_0^t \sum_{i\in \Gamma_{\V}} \left( \vec{s}_{t-\tau} \cdot \vec{\phi} - \Hh \left(\vxv, \vec{\phi}\right)\right)p_{t-\tau}(\vxv)  \ud \tau\\
=& \int_0^t \sum_{i\in \Gamma_{\V}}\left( \sum_{k \in \Gamma_{\V}}\frac{1}{h} v_{t-s}(\vxv, \vec{x}_k) \Phi(\vxv, \vec{x}_k) \phi_k - \Hh \left(\vxv, \vec{\phi}\right)\right)p_{t-s}(\vxv)  \ud s
\end{aligned}
\end{equation}
for any $\vec{\phi} \in \mathbb{R}^{|\Gamma_{\V}|}$.

On the other end,
\begin{equation}
\begin{aligned}
 &\sum_{i\in \Gamma_{\V}} \uh(\vxv, t) p_0(\vxv) -\sum_{i \in \Gamma_{\V}} \uh(\vxv, 0)p_t(\vxv) \\
 =& \sum_{i \in \Gamma_{\V}} \int_0^t \frac{d}{ds} \left(\uh(\vxv, s)p_{t-s}\left(\vxv\right)\right)ds\\
= &\sum_{i \in \Gamma_{\V}}\int_0^t \partial_t \uh(\vxv, s)p_{t-s} \left(\vxv\right) - \uh(\vxv, s) \dot{p}_{t-s}(\vxv) ds\\
=&  \int_0^t \sum_{i \in \Gamma_{\V}}\sum_{k\in \Gamma_{\V}} \Phi\left(\vxv, \vec{x}_k\right)\left(e^\frac{\uh\left(\vec{x}_k, s\right)-\uh\left(\vxv, s\right)}{h}-1\right)p_{t-s} \left(\vxv\right)\\
&-\sum_{i \in \Gamma_{\V}}\sum_{k\in \Gamma_{\V}} \frac{1}{h} \uh(\vxv,s)\left(v_{t-s}(\vec{x}_k, \vxv) \Phi \left(\vec{x}_k,\vxv\right) p_{t-s}(\vec{x}_k) - v_{t-s}(\vxv, \vec{x}_k) \Phi \left(\vxv,\vec{x}_k\right) p_{t-s}(\vec{x}_i)\right) ds\\
=&\int_0^t \sum_{i \in \Gamma_{\V}}\sum_{k\in \Gamma_{\V}} \Phi\left(\vxv, \vec{x}_k\right)\left(e^\frac{\uh\left(\vec{x}_k, s\right)-\uh\left(\vxv, s\right)}{h}-1\right)p_{t-s} \left(\vxv\right)\\
&-\sum_{i \in \Gamma_{\V}}\sum_{k\in \Gamma_{\V}} \frac{1}{h} \uh(\vxv,s)v_{t-s}(\vec{x}_k, \vxv) \Phi \left(\vec{x}_k,\vxv\right) p_{t-s}(\vec{x}_k) - \sum_{i \in \Gamma_{\V}}\sum_{k\in \Gamma_{\V}} \frac{1}{h} \uh(\vxv,s) v_{t-s}(\vxv, \vec{x}_k) \Phi \left(\vxv,\vec{x}_k\right) p_{t-s}(\vec{x}_i) ds\\
=&\int_0^t \sum_{i \in \Gamma_{\V}}\sum_{k\in \Gamma_{\V}} \Phi\left(\vxv, \vec{x}_k\right)\left(e^\frac{\uh\left(\vec{x}_k, s\right)-\uh\left(\vxv, s\right)}{h}-1\right)p_{t-s} \left(\vxv\right)\\
&-\sum_{k \in \Gamma_{\V}}\sum_{i\in \Gamma_{\V}} \frac{1}{h} \uh(\vec{x}_k,s)v_{t-s}(\vxv, \vec{x}_k) \Phi \left(\vxv,\vec{x}_k\right) p_{t-s}(\vxv) - \sum_{i \in \Gamma_{\V}}\sum_{k\in \Gamma_{\V}}\frac{1}{h}  \uh(\vxv,s) v_{t-s}(\vxv, \vec{x}_k) \Phi \left(\vxv,\vec{x}_k\right) p_{t-s}(\vec{x}_i) ds\\
\end{aligned}
\end{equation}
where the last equality comes from exchanging the index name $i$ and $k$ for the second term. Moreover, since $\sum_{k\in \Gamma_{\V}} v_{t-s}(\vxv, \vec{x}_k) \Phi \left(\vxv,\vec{x}_k\right) =0$, we have
\begin{equation}\label{eqn:uhd}
    \begin{aligned}
&\sum_{i\in \Gamma_{\V}} \uh(\vxv, t) p_0(\vxv) -\sum_{i \in \Gamma_{\V}} \uh(\vxv, 0)p_t(\vxv)\\
=&\int_0^t \sum_{i \in \Gamma_{\V}}\sum_{k\in \Gamma_{\V}} \Phi\left(\vxv, \vec{x}_k\right)\left(e^\frac{\uh\left(\vec{x}_k, s\right)-\uh\left(\vxv, s\right)}{h}-1\right)p_{t-s} \left(\vxv\right)-\sum_{k \in \Gamma_{\V}}\sum_{i\in \Gamma_{\V}} \frac{1}{h}\uh(\vec{x}_k,s)v_{t-s}(\vxv, \vec{x}_k) \Phi \left(\vxv,\vec{x}_k\right) p_{t-s}(\vxv)ds,\\
=&\int_0^t \sum_{i \in \Gamma_{\V}}  \left( \sum_{k\in \Gamma_{\V}}\Phi\left(\vxv, \vec{x}_k\right)\left(e^\frac{\uh\left(\vec{x}_k, s\right)-\uh\left(\vxv, s\right)}{h}-1\right)-\sum_{k \in \Gamma_{\V}} \frac{1}{h}v_{t-s}(\vxv, \vec{x}_k) \Phi \left(\vxv,\vec{x}_k\right) \uh(\vec{x}_k,s)\right)p_{t-s}(\vxv)ds\\
\geq & - \int_0^t \sum_{i\in \Gamma_{\V}} \tilde{L}_{\V}(\vxv, \vec{v}_{t-s}) p_{t-s}(\vxv)  \ud s,
    \end{aligned}
\end{equation}
where the last inequality comes from \eqref{eqn:runningcost}. Rearranging the inequality and we obtain
\[
\uh(\vec{x}_l, t) \geq \sum_{i \in \Gamma_{\V}}f(\vxv) p_t(\vxv)- \int_0^t \sum_{i\in \Gamma_{\V}} \tilde{L}_{\V}(\vxv, \vec{v}_{t-s}) p_{t-s}(\vxv)  \ud s,
\]
which implies
\[
\uh(\vec{x}_l, t) \geq \gamma(\vec{x}_l, t).
\]

Moreover, by \eqref{eqn:v_tphi}, the equality in the last inequality of \eqref{eqn:uhd} is achieved when
\begin{equation}
    v_{t-s}(\vxv, \vec{x}_k) = \left\{\begin{aligned}
& e^\frac{\uh(\vec{x}_k,s)-\uh(\vxv, s)}{h}, \quad \qquad \qquad \qquad \qquad \qquad \qquad \qquad \qquad \,  \text{ if }\vec{x}_{k} \in \left\{\vxv \pm \vec{\nu}_jh: j= 1, \cdots, M \right\}, \\
&\frac{\sum_{l\in\Gamma_{\V}, \vec{x}_m \in \left\{\vxv \pm \vec{\nu}_jh: j= 1, \cdots, M \right\}}\Phi\left(\vxv, \vec{x}_m\right)e^\frac{\uh(\vec{x}_m,s)-\uh(\vxv, s)}{h}}{\sum_{l\in\Gamma_{\V}, \vec{x}_m \in \left\{\vxv \pm \vec{\nu}_jh: j= 1, \cdots, M \right\}} \Phi(\vxv, \vec{x}_m)}, \,\,\,\, \text{ if }  k=i,\\
& 0, \qquad \qquad \qquad \qquad  \qquad \qquad \qquad \qquad \qquad \qquad \qquad \quad \, \,\text{ otherwise}.
\end{aligned}
\right.
\end{equation}
Therefore, $\uh(\vec{x}_l, t) = \gamma(\vec{x}_l, t)$.
\end{proof}

\section{Convergence results for a single chemical reaction}\label{sec:restrict1d}
In this section, we examine the case of two species and a single chemical reaction, i.e., $N=2$ and $M=1$. In general, the same results also hold for $N \geq 2$ and $M=1$. Remarkably, after a suitable reparametrization, we show that the discrete state-constrained solution to the WKB reformulation of the backward equation converges to the solution of a continuous Hamilton--Jacobi equation with a Neumann boundary condition.

Consider the following chemical reaction with $N=2$ species $X_1$ and $X_2$, that is,
\begin{equation}
\nu^+_{11}X_{1} \quad \ce{<=>[k_1^+][k_1^-]} \quad \nu^{-}_{12}  X_2,
\end{equation}
where $k_1^\pm\geq 0$ are the reaction rates for the forward and backward reactions. In this case, $\vec{\nu}= (-\nu^+_{11}, \nu^-_{12})$. Let $\Omega \subset \mathbb{R}_+^2$ be an open bounded domain with Lipschitz boundary and $\overline{\Omega} \subset \mathbb{R}_+^2$. The chemical reaction process is restricted in $\overline{\Omega}$. Let $\vx_0 \in \overline{\Omega}$, which will serve as the limiting starting position for the rescaled reaction process.

We consider the intersection of $\overline{\Omega}$ with the line passing through $\vx_0$ in the direction of $\vec{\nu}$ and denote the maximal closed line segment in the direction $\vec{\nu}$ that contains $\vx_0$ by
\begin{equation}\label{eqn:defs0}
S_0:=  \left\{\vec{x}_0 + \alpha \vec{\nu}: a\leq \alpha \leq b\right\} \subset \left\{\vec{x}_0 + \alpha \vec{\nu}: \alpha \in \mathbb{R} \right\}\cap \overline{\Omega},
\end{equation}
for some constants $a, b \in \mathbb{R}$. Equivalently, we choose the largest interval $[a, b] \subset\mathbb{R}$ for which
\begin{equation}
   \left\{\vec{x}_0 + \alpha \vec{\nu}: a\leq \alpha \leq b\right\} \subset\overline{\Omega}.
\end{equation}
In particular, the endpoints $\vx_0+a\vec{\nu}$ and $\vx_0+b\vec{\nu}$ lie on the boundary $\partial \Omega$. Note that if the domain $\Omega$ is not convex, it is possible that $S_0 \subsetneq \left\{\vec{x}_0 + \alpha \vec{\nu}: \alpha \in \mathbb{R} \right\}\cap \overline{\Omega}$.

Recall from \eqref{eqn:defomegah} that the rescaled chemical reaction process $\cv(t)$ starts at
\[\vxzh \in \Omegah = \left\{ \vxi = \vec{i} h : \,\,  \vec{i} \in \mathbb{N}^N,\, \vxi \in \overline{\Omega}\right\}
\] and $\vxzh$ may or may not lie in $S_0$. We first consider the case $\vxzh \in S_0$, focusing on the rescaled chemical reaction process $\cv(t)$ whose state space is restricted to the line segment $S_0$. A more general setting, allowing arbitrary initial positions in $\overline{\Omega}$, will be addressed later in Section \ref{sec:ubetaconvergence}. Specifically, assume $\cv(0)=\vxzh=\vx_0+(r+kh)\vec{\nu} \in S_0$ for some $r \in \mathbb{R}$ small and $k\in \mathbb{Z}$. Then the process evolves within the discrete set
\begin{equation} \label{eqn:ohtilde}
 \horh:=\left\{\vec{x}_0 +(r+\alpha) \vec{\nu} \in S_0: \alpha =kh, k  \in \mathbb{Z}\right\}.
\end{equation}
By introducing a suitable reparametrization, we can reformulate the problem as a one-dimensional system, as detailed below.

Let $\vxv \in \horh$. By the previous discussion, the discrete backward Hamilton–Jacobi equation obtained via the WKB reformulation is given by \eqref{eqn:1duh}, namely,
\[
\begin{aligned}
\pt_t \uh(\vxv,t) = & \mathbbm{1}_{\left\{\vxv +h\vec{\nu} \,\in \,\horh\right\}}  \Phi^+(\vxv) \left( e^{\frac{\uh\left(\vxv+ \vec{\nu} h,t\right)-\uh\left(\vxv,t\right)}{h}} - 1\right)\\ &+ \mathbbm{1}_{\left\{\vxv -h\vec{\nu} \, \in \,\horh \right\}}\Phi^-(\vxv)\left( e^{  \frac{\uh\left(\vxv- \vec{\nu}h,t\right)-\uh\left(\vxv,t\right)}{h} } - 1\right),\\
\uh(\vxv, 0)=&u_0(\vxv),
\end{aligned}
\]
where $u_0$ is a given bounded and continuous function.

From \eqref{eqn:ohtilde}, $\vxv = \vec{x}_0 +(r+\alpha) \vec{\nu} $ for some $\alpha =kh $ with $k \in \mathbb{Z}$. Since $\horh \subset S_0$,
\[
\horh=\left\{\vec{x}_0 +(r+\alpha) \vec{\nu}: \alpha =kh, k  \in \mathbb{Z}, k_{\sa, \sr, \V} \leq k \leq k_{\scb, \sr, \V}\right\}
\]
for some integers $k_{\sa, \sr, \V} \leq k_{\scb, \sr, \V}$ that depend on $a, b, r$, $\vec{x}_0$ and $h$, and we only indicate the dependence on $a, b, r, h$ for simplicity. Note that for any fixed $r\in \mathbb{R}$, we have
\begin{equation}\label{eqn:endpts}
\lim_{h\to 0^+} k_{\sa, \sr, \V}h=a, \qquad \, \lim_{h\to 0^+} k_{\scb, \sr, \V}h=b .
\end{equation}
Denote
\[
\torh :=\{r+\aldh: \aldh=kh, k\in \mathbb{Z},  k_{\sa, \sr, \V} \leq k \leq k_{\scb, \sr, \V}\}
\]
and define $\wrh:\torh \times [0,\infty) \to \mathbb{R}$ by
\[\wrh \left(r+\aldh, t\right): = \uh(\vec{x}_0+(r+\aldh) \vec{\nu} , t).\]
Moreover, we define $\tilde{\Phi}^+(r+\aldh):= \Phi^+(\vec{x}_0+(r+\aldh) \vec{\nu})$, $\tilde{\Phi}^-(r+\aldh):= \Phi^-(\vec{x}_0+(r+\aldh) \vec{\nu})$, $w_0(r+\aldh):=u_0(\vec{x}_0 +(r+\aldh) \vec{\nu})$ for any $r+\aldh \in \torh$.
Then from \eqref{eqn:1duh}, $\wrh$ solves
\begin{equation}\label{eqn:1dv_h}
    \left\{\begin{aligned}
\pt_t \wrh(r+\aldh,t) = & \mathbbm{1}_{\{\aldh
\leq (k_{b,r, h}-1)h\}}  \tilde{\Phi}^+(r+\aldh) \left( e^{\frac{\wrh(r+\aldh+ h, t)-\wrh(r+\aldh, t)}{h}} - 1\right)\\ &+ \mathbbm{1}_{\{\aldh
\geq (k_{a,r, h}+1)h\}}\tilde{\Phi}^-(r+\aldh)\left( e^{  \frac{\wrh(r+\aldh- h, t)-\wrh(r+\aldh, t)}{h} } - 1\right), \\  &\qquad\qquad\qquad\qquad\qquad\qquad\qquad\text{ for }  (r+\aldh, t) \in \torh \times (0, \infty),\\
\wrh(r+\aldh, 0)=&w_0(r+\aldh),
    \end{aligned}
    \right.
\end{equation}
with the discrete Hamiltonian
\begin{equation}
\begin{aligned}
\tHrh: \ell^{\8}(\torh) \to \ell^{\8}(\torh); \qquad  ( w(r+\aldh) )_{r+\aldh \in\torh} \mapsto ( \tHrh(r+\aldh, w(r+\aldh), w(\cdot)) )_{r+\aldh \in\torh}
\end{aligned}
\end{equation}
defined by
\begin{equation}\label{eqn:defHtildeh}
\begin{aligned}
\tHrh\left(r+\aldh , w(r+\aldh),w\right) :=&\mathbbm{1}_{\{\aldh \leq (k_{\scb, \sr, \V}-1)h\}}  \tilde{\Phi}^+(r+\aldh) \left( e^{\frac{w(r+\aldh+ h)-w(r+\aldh)}{h}} - 1\right)\\ &+ \mathbbm{1}_{\{\aldh
\geq (k_{\sa, \sr, \V}+1)h\}}\tilde{\Phi}^-(r+\aldh)\left( e^{  \frac{w(r+\aldh- h)-w(r+\aldh)}{h} } - 1\right).
\end{aligned}
\end{equation}
We will show that $\wrh$ converges to $w$ as $r, h \to 0^+$, where $w$ solves the continuous Hamilton--Jacobi equation with a Neumann boundary condition given in \eqref{HJE_NM}:
\begin{equation}
    \left\{\begin{aligned}
     \pt_t w(\alpha, t) =&  \tilde{\Phi}^+(\alpha) \left( e^{\partial_\alpha w(\alpha,t)} - 1\right)+ \tilde{\Phi}^-(\alpha)\left( e^{-\partial_\alpha w(\alpha,t)} - 1\right) \quad \text{ in } (a ,b) \times (0, \infty),\\ -\partial_\alpha w(a, t) =& \partial_\alpha w(b, t)=0 \qquad \qquad \qquad\qquad\qquad \qquad \qquad \qquad \,\text{ for } t \in (0, \infty),\\w(\alpha, 0)=&w_0(\alpha) \qquad \qquad \qquad\qquad\qquad \qquad \qquad \qquad \qquad \quad \,\text{ on } [a, b],
    \end{aligned}
    \right.
\end{equation}
where $\tilde{\Phi}^+(\alpha)= \Phi^+(\vec{x}_0+ \alpha \vec{\nu})$, $\tilde{\Phi}^-(\alpha)= \Phi^-(\vec{x}_0+ \alpha\vec{\nu})$, $w_0(\alpha)=u_0(\vec{x}_0 +\alpha \vec{\nu})$ for $\alpha \in [a, b]$. This is a Cauchy problem with a Neumann boundary condition for $t >0$. Note that the Hamiltonian
\begin{equation}\label{eqn:Htilde}
\tilde{H}(\alpha, p):=\tilde{\Phi}^+(\alpha) \left( e^p - 1\right)+ \tilde{\Phi}^-(\alpha)\left( e^{-p} - 1\right)
\end{equation}
is strictly convex in $p$ and is coercive, i.e., $\lim_{|p|\to +\infty} \inf_{\alpha \in [a, b]}\tilde{H} (\alpha, p) =\infty$.

\subsection{Comparison Principle for continuous HJE with Neumann Boundary condition}
The comparison principle for Hamilton-Jacobi equations with Neumann Boundary condition is well established in the literature (see for instance  \cite{lions1985neumann, BARLES1991, DUPUIS19901123, Ishii2011}). We include it here for completeness.

\begin{prop}[{Restatement of \cite[Theorem 3.4]{Ishii2011}}] \label{prop:cpn} Let $T>0$. Suppose $v_1 \in {\rm USC} \left([a, b] \times [0, T)\right)$ and $v_2 \in {\rm LSC} \left([a, b] \times [0, T)\right)$, where ${\rm USC}$ and ${\rm LSC}$ denote the spaces of upper and lower semincontinuous functions, respectively. Assume that $v_1$ is a viscosity subsolution and $v_2$ is a viscosity supersolution of \eqref{HJE_NM} in $[a,b] \times [0, T)$. Then $v_1\leq v_2$ in $[a, b] \times [0, T)$.
\end{prop}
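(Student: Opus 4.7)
The plan is a standard doubling-of-variables argument adapted to the Neumann boundary condition, in the spirit of \cite{Ishii2011, BARLES1991}. Arguing by contradiction, suppose $M := \sup_{[a,b]\times[0,T)}(v_1 - v_2) > 0$. First I would reduce to a strict-subsolution setting: for small $\eta, \delta > 0$, replace $v_1$ by $v_1(\alpha, t) - \eta/(T-t) - \delta t$. The strict convexity of $p \mapsto \tilde H(\alpha, p)$ is not needed here; one only needs that the subtraction adds a strictly positive amount $\eta/(T-t)^2 + \delta$ to the left-hand side of the subsolution inequality. The penalty $\eta/(T-t)$ also ensures $v_1 - v_2 \to -\infty$ as $t \to T^-$, so that any positive supremum is attained at some $(\alpha^*, t^*) \in [a,b] \times [0,T)$.

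Next, for parameters $\varepsilon, \lambda > 0$, introduce the auxiliary function
\[
\Phi_{\varepsilon,\lambda}(\alpha, \beta, t, s) = v_1(\alpha, t) - v_2(\beta, s) - \frac{(\alpha-\beta)^2}{2\varepsilon} - \frac{(t-s)^2}{2\varepsilon} - \lambda\bigl(\psi(\alpha) + \psi(\beta)\bigr),
\]
on $[a,b]^2 \times [0,T)^2$, where $\psi \in C^2([a,b])$ is chosen with $\psi'(a) < 0$ and $\psi'(b) > 0$. By upper semicontinuity, $\Phi_{\varepsilon,\lambda}$ attains a maximum at some $(\alpha_\varepsilon, \beta_\varepsilon, t_\varepsilon, s_\varepsilon)$. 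Standard estimates then give $(\alpha_\varepsilon - \beta_\varepsilon)^2/\varepsilon, (t_\varepsilon - s_\varepsilon)^2/\varepsilon \to 0$ and, along a subsequence, $\alpha_\varepsilon, \beta_\varepsilon \to \hat\alpha$, $t_\varepsilon, s_\varepsilon \to \hat t$ as $\varepsilon \to 0^+$. The case $\hat t = 0$ is ruled out by the common initial data $v_1(\cdot, 0) \leq w_0 \leq v_2(\cdot, 0)$ combined with positivity of the supremum, so for small $\varepsilon$ we have $t_\varepsilon, s_\varepsilon \in (0, T)$.

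The main obstacle, and what distinguishes the Neumann setting from the whole-space or Dirichlet case, lies in handling the situation where $\alpha_\varepsilon$ or $\beta_\varepsilon$ lies on the boundary $\{a, b\}$. Consider for example $\alpha_\varepsilon = a$. The subsolution test function $\phi$ satisfies $\partial_\alpha \phi(a, t_\varepsilon) = (a-\beta_\varepsilon)/\varepsilon + \lambda\psi'(a)$, and the viscosity subsolution condition at the boundary requires either the PDE inequality to hold or $-\partial_\alpha\phi(a, t_\varepsilon) \leq 0$. The latter alternative would yield $(a-\beta_\varepsilon)/\varepsilon \geq -\lambda\psi'(a) > 0$, contradicting $\beta_\varepsilon \geq a$. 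Therefore the PDE inequality must hold at $\alpha_\varepsilon = a$. Symmetric arguments, using $\psi'(b) > 0$ for the right endpoint and the corresponding supersolution Neumann condition, rule out the purely boundary alternatives at $\alpha_\varepsilon = b$, $\beta_\varepsilon = a$, and $\beta_\varepsilon = b$. Thus the PDE inequalities for $v_1$ at $(\alpha_\varepsilon, t_\varepsilon)$ and for $v_2$ at $(\beta_\varepsilon, s_\varepsilon)$ are both active.

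Subtracting these two PDE inequalities, and using continuity of $\tilde H$ in $\alpha$ together with the fact that the gradient arguments $p_\varepsilon = (\alpha_\varepsilon-\beta_\varepsilon)/\varepsilon + \lambda\psi'(\alpha_\varepsilon)$ and $q_\varepsilon = (\alpha_\varepsilon-\beta_\varepsilon)/\varepsilon - \lambda\psi'(\beta_\varepsilon)$ differ by $O(\lambda)$, I expect an estimate of the form
\[
\frac{\eta}{(T-t_\varepsilon)^2} + \delta \leq \tilde H(\alpha_\varepsilon, p_\varepsilon) - \tilde H(\beta_\varepsilon, q_\varepsilon).
\]
Since $p_\varepsilon, q_\varepsilon$ remain bounded in $\varepsilon$ for fixed $\lambda$ (up to extracting a subsequence, using the standard boundedness of maximizing gradients coming from upper semicontinuity of $v_1$ and lower semicontinuity of $v_2$), passing $\varepsilon \to 0^+$ via uniform continuity of $\tilde H$ on compact sets and then $\lambda, \eta, \delta \to 0^+$ yields $0 < \delta \leq 0$, the desired contradiction. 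The step requiring the most care is verifying the PDE inequality at a boundary maximum point via the sign choice in the penalty $\psi$, which is precisely where the Neumann boundary condition enters.
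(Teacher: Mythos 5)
The paper does not actually prove this proposition: it is stated as a restatement of \cite[Theorem 3.4]{Ishii2011} and used as a black box, so you are supplying an argument where the authors supply only a citation. Your overall route --- reduction to a strict subsolution via the $\eta/(T-t)+\delta t$ penalty, doubling of variables with the boundary penalty $\lambda(\psi(\alpha)+\psi(\beta))$, and the sign analysis $\psi'(a)<0<\psi'(b)$ forcing the PDE alternative at boundary maximum points --- is the standard one, and the Neumann-specific part of your sketch (ruling out the boundary alternatives for both the sub- and supersolution) is carried out correctly.

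However, the concluding limit passage has a genuine gap. You assert that $p_\varepsilon=(\alpha_\varepsilon-\beta_\varepsilon)/\varepsilon+\lambda\psi'(\alpha_\varepsilon)$ and $q_\varepsilon$ remain bounded in $\varepsilon$ ``using the standard boundedness of maximizing gradients coming from upper semicontinuity of $v_1$ and lower semicontinuity of $v_2$.'' Semicontinuity only yields $(\alpha_\varepsilon-\beta_\varepsilon)^2/\varepsilon\to 0$; it does \emph{not} bound $(\alpha_\varepsilon-\beta_\varepsilon)/\varepsilon$, and for merely USC/LSC sub- and supersolutions this quantity can blow up. The usual way to close the argument without a gradient bound is the structural condition $|\tilde{H}(\alpha,p)-\tilde{H}(\beta,p)|\le \omega\bigl(|\alpha-\beta|(1+|p|)\bigr)$, but that condition fails for this Hamiltonian: with Lipschitz coefficients one only gets $|\tilde{H}(\alpha,p)-\tilde{H}(\beta,p)|\le L|\alpha-\beta|\,e^{|p|}$, which is not controlled by $|\alpha-\beta|(1+|p|)$. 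Consequently, ``uniform continuity of $\tilde{H}$ on compact sets'' cannot justify $\tilde{H}(\alpha_\varepsilon,p_\varepsilon)-\tilde{H}(\beta_\varepsilon,q_\varepsilon)\to O(\lambda)$ as written. To repair this you need an ingredient specific to the structure of $\tilde{H}$: for instance, use $\tilde{\Phi}^{\pm}\ge c_0>0$ to obtain the multiplicative estimate $|\tilde{H}(\alpha,p)-\tilde{H}(\beta,p)|\le \tfrac{L}{c_0}|\alpha-\beta|\bigl(\tilde{H}(\beta,p)+2c_0\bigr)$ and combine it with the one-sided bounds on $\tilde{H}$ at the two maximum points supplied by the sub-/supersolution inequalities (together with a separate device to control $(t_\varepsilon-s_\varepsilon)/\varepsilon$); or invoke convexity of $\tilde{H}$ in $p$ and $\tilde{H}(\alpha,0)=0$ and compare $\theta v_1$ with $v_2$ for $\theta<1$; or first reduce to Lipschitz sub-/supersolutions. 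Verifying that $\tilde{H}$ satisfies the hypotheses of the cited comparison theorem is precisely the step your sketch skips, and it is where the real work lies.
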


\begin{cor}\label{cor:1dcontcontract}
 Let $T>0$. Suppose $w \in C \left([a, b] \times [0, T)\right)$ is a viscosity solution to \eqref{HJE_NM} in $[a,b] \times [0, T)$ with initial value $w_0\in C([a, b])$, and $\tilde{w} \in C \left([a, b] \times [0, T)\right)$ is a viscosity solution to \eqref{HJE_NM} in $[a,b] \times [0, T)$ with initial value $\tilde{w}_0\in C([a, b])$. Then
 \[
 \left\|w-\tilde{w}\right\|_{L^\infty\left([a, b]\times[0,T)\right)} \leq \left\|w_0-\tilde{w}_0\right\|_{L^\infty\left([a,b]\right)}.
 \]
\end{cor}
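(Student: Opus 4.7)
The plan is to reduce the two-sided $L^\infty$ bound to two applications of the comparison principle (Proposition \ref{prop:cpn}), using the fact that the Hamiltonian $\tilde{H}(\alpha,p)$ and the Neumann boundary condition in \eqref{HJE_NM} depend on the solution only through its spatial derivative. Consequently, adding a constant to any viscosity (sub/super)solution produces another viscosity (sub/super)solution with the initial data shifted by the same constant.

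Let $C := \|w_0 - \tilde{w}_0\|_{L^\infty([a,b])}$ and set $W(\alpha,t) := w(\alpha,t) - C$ on $[a,b]\times[0,T)$. Since neither $\tilde{H}$ nor the Neumann boundary operator $-\partial_\alpha w(a,\cdot) = \partial_\alpha w(b,\cdot) = 0$ sees the additive constant $C$, $W$ remains a viscosity solution of the PDE in $(a,b)\times(0,T)$ with the same Neumann boundary condition, and its initial datum satisfies
\[
W(\alpha,0) \;=\; w_0(\alpha) - C \;\le\; \tilde{w}_0(\alpha) \quad \text{for all } \alpha\in[a,b].
\]
In particular, $W$ is a viscosity subsolution of \eqref{HJE_NM} whose initial trace lies pointwise below that of the viscosity supersolution $\tilde{w}$. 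Applying Proposition \ref{prop:cpn} in this setting (the standard doubling-of-variables argument for the Cauchy--Neumann problem accommodates ordered initial data without modification, since the penalty term controls comparisons at $t=0$), we obtain $W \le \tilde{w}$ on $[a,b]\times[0,T)$, i.e., $w - \tilde{w} \le C$.

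Exchanging the roles of $w$ and $\tilde{w}$ and repeating the argument with $\tilde{W} := \tilde{w} - C$ yields $\tilde{w} - w \le C$. Combining the two inequalities gives $|w - \tilde{w}| \le C$ pointwise, which is the claimed estimate. The only mildly delicate step is the invocation of the comparison principle with unequal initial data; this is standard for the Cauchy--Neumann problem under the coercivity and convexity of $\tilde{H}$ already noted after \eqref{eqn:Htilde}, and if one prefers to avoid relying on that extension, it can be bypassed by applying Proposition \ref{prop:cpn} with $v_1 := W$ and $v_2 := \tilde{w}$ after replacing the initial condition in \eqref{HJE_NM} by $\min\{W(\cdot,0),\tilde{w}(\cdot,0)\}$ for $v_1$ and $\tilde{w}_0$ for $v_2$, both of which remain admissible semicontinuous sub-/supersolutions on the closed cylinder.
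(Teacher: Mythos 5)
Your proof is correct and is essentially the paper's own argument: both exploit that adding a constant to a solution of \eqref{HJE_NM} preserves the (sub/super)solution property and then invoke the comparison principle, Proposition \ref{prop:cpn}, with ordered initial data. The only cosmetic difference is that you shift $w$ down by $\|w_0-\tilde{w}_0\|_{L^\infty}$ to make a subsolution below $\tilde{w}$, whereas the paper shifts $\tilde{w}$ up by the same constant to make a supersolution above $w$.
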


\begin{proof}
Define $v: [a, b] \times [0, \infty) \to \mathbb{R}$ by
\[
v(x, t):= \tilde{w}(x, t)+  \left\|w_0-\tilde{w}_0\right\|_{L^\infty\left([a,b]\right)}.
\]
Then $v$ is a supersolution of \eqref{HJE_NM} with initial condition $w_0$. By Proposition \ref{prop:cpn}, for any $x \in [a, b]$ and $t \geq 0$,
\[
w(x ,t) \leq v(x,t),
\]
which implies
\[
w(x, t) -\tilde{w}(x, t)\leq \left\|w_0-\tilde{w}_0\right\|_{L^\infty\left([a,b]\right)}.
\]
The proof of the other inequality,
\[
\tilde{w}(x, t)-w(x, t) \leq \left\|w_0-\tilde{w}_0\right\|_{L^\infty\left([a,b]\right)}
\]
follows by a similar argument.
\end{proof}

\subsection{Existence and uniqueness of continuous HJE with Neumann Boundary condition}
The uniqueness of \eqref{HJE_NM} follows directly from the comparison principle. As for existence, a representation formula derived from an optimal control problem provides a solution to \eqref{HJE_NM}. This is also well known in the literature (see \cite{Ishii2011}).

Define $w :[a, b] \times [0, \infty) \to \mathbb{R}$ by
\begin{equation}\label{eqn:1dvf}
    w(\alpha, t) := \sup\left\{\left.w_0(\eta(t))-\int_0^t \tilde{L}\left(\eta(s), v(s) \right) ds \right|\left(\eta, v, l\right) \in {\rm SP}(\alpha)\right\}
\end{equation}
where $\tilde{L} :  [a, b] \times \mathbb{R}  \to \mathbb{R}$ is the Legendre transform of $ \tilde{H}$ in \eqref{eqn:Htilde}, and for $ \alpha \in [a, b]$, ${\rm SP}(\alpha)$ denotes the family of solutions to the Skorokhod problem: for given $\alpha \in [a, b]$ and $v\in L^1_{\rm loc} \left([0, \infty), \mathbb{R}\right)$, find a pair $(\eta, l)\in {\rm AC}_{\rm loc} \left([0, \infty), \mathbb{R}\right) \times L^1_{\rm loc} \left([0, \infty), \mathbb{R}\right)$ such that
\begin{equation} \label{eqn:1dpath}
    \left\{
    \begin{aligned}
     \eta(0) = &\alpha,\\
     \eta(s) \in & [a, b] \qquad\qquad\qquad \text{ for all } s\in [0, \infty),\\
     \dot{\eta}(s)+l(s) n(\eta(s))=&v(s) \qquad \qquad\qquad\, \text{ for a.e. } s \in (0, \infty),\\
     l(s) \geq& 0 \qquad \qquad \qquad \quad \text{ for a.e. } s \in (0, \infty),\\
     l(s)= &0 \text{ if } \eta(s) \in (a, b) \quad\text{ for a.e. } s \in (0, \infty),
    \end{aligned}
    \right.
\end{equation}
where $n:\{a, b\} \to \{-1, 1\}$ is defined by
$n(a)=-1$ and $n(b)=1$, which can be viewed as the outward normal vector of the domain $[a, b]$.
\begin{prop}[{Restatement of \cite[Theorem 5.1]{Ishii2011}}] \label{thm:rf}
    The value funcion $w$ defined by \eqref{eqn:1dvf} is continuous on $[a, b] \times [0, \infty)$ and it is a viscosity solution to \eqref{HJE_NM}. Moreover, $\lim_{t\to 0^+}w(\alpha, t)=w_0(\alpha)$ uniformly for $\alpha \in [a,b]$.
\end{prop}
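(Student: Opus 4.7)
The plan is to follow the standard optimal-control/dynamic programming approach: establish a dynamic programming principle (DPP) for $w$, use it together with test-function expansions to verify the viscosity sub- and supersolution inequalities of \eqref{HJE_NM}, and separately derive continuity and the initial-time limit. First I would prove the DPP: for any $\alpha \in [a,b]$ and $0 \leq s \leq t$,
\begin{equation*}
w(\alpha, t) = \sup\left\{w(\eta(s), t-s) - \int_0^s \tilde{L}(\eta(\tau), v(\tau))\, d\tau : (\eta, v, l) \in \mathrm{SP}(\alpha) \text{ on } [0, s]\right\},
\end{equation*}
which follows from the concatenation and restriction properties of solutions to the Skorokhod problem \eqref{eqn:1dpath}: gluing a path from $\alpha$ to $\eta(s)$ on $[0,s]$ with a path from $\eta(s)$ on $[0, t-s]$ again solves \eqref{eqn:1dpath} on $[0, t]$, and conversely.

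For continuity of $w$, I would use the one-dimensional monotonicity of the Skorokhod reflection map on $[a, b]$: for two initial positions $\alpha_1 \leq \alpha_2$ driven by a common control $v$, the reflected trajectories satisfy $|\eta_1(s) - \eta_2(s)| \leq |\alpha_1 - \alpha_2|$ uniformly in $s$. Combined with the uniform continuity of $w_0$ on $[a,b]$ and of $\tilde{L}(\cdot, v)$, this yields equicontinuity of the cost functional in the initial position and hence continuity of $w$ in $\alpha$. Continuity in $t$ and the initial-time limit $\lim_{t \to 0^+} w(\alpha, t) = w_0(\alpha)$ then follow from the DPP together with the superlinear growth of $\tilde{L}$ inherited from the coercivity of $\tilde{H}(\alpha, \cdot)$: the control $v \equiv 0$ (with $\eta \equiv \alpha$, $l \equiv 0$) yields the lower bound $w(\alpha, t) \geq w_0(\alpha) - t\tilde{L}(\alpha, 0)$, while Jensen's inequality applied to the convex integrand $\tilde{L}$ shows that paths ending far from $\alpha$ incur running cost growing faster than linearly in $1/t$, so the supremum concentrates near $\alpha$ and hence near $w_0(\alpha)$ as $t \to 0^+$, uniformly in $\alpha \in [a,b]$.

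For the viscosity inequalities at an interior point $\alpha \in (a, b)$, the DPP combined with controls that keep $\eta$ away from $\{a, b\}$ (so that $l \equiv 0$) reduces to the classical Bellman argument, giving $\partial_t w = \tilde{H}(\alpha, \partial_\alpha w)$ via test-function expansion and the Legendre duality $\tilde{H}(\alpha, p) = \sup_v(pv - \tilde{L}(\alpha, v))$. The main obstacle is verifying the Neumann boundary inequalities at $\alpha \in \{a, b\}$ in the relaxed viscosity sense. For the subsolution property at $(b, t)$, if a smooth $\varphi$ touches $w$ from above with $\partial_\alpha \varphi(b, t) > 0$, I would take a near-maximizer $(\eta_\eps, v_\eps, l_\eps)$ in the DPP on $[0, s]$; since $\eta_\eps(\tau) \leq b$ and $\dot\eta_\eps = v_\eps - l_\eps$ with $l_\eps \geq 0$, Taylor expansion of $\varphi$ together with $w \leq \varphi$ yields
\[
s\,\partial_t \varphi(b, t) \leq \partial_\alpha \varphi(b, t)\bigl(\eta_\eps(s) - b\bigr) - \int_0^s \tilde{L}(\eta_\eps, v_\eps)\, d\tau + \eps s + o(s);
\]
using $\partial_\alpha \varphi(b,t) > 0$ and $\eta_\eps - b \leq 0$ to discard the reflection contribution, dividing by $s$, and taking the supremum over $v_\eps$ recovers $\partial_t \varphi \leq \tilde{H}(b, \partial_\alpha \varphi)$. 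When $\partial_\alpha \varphi(b, t) \leq 0$ the boundary part of the subsolution condition holds trivially; the case $\alpha = a$ is symmetric with $n(a) = -1$. For the supersolution property, I would run the dual argument: choose a specific admissible control (e.g., $v$ pointing inward so that $l \equiv 0$ on a short interval) in the DPP in the "for all paths" direction, and verify either the HJE or the boundary inequality $\partial_\alpha \varphi \geq 0$ at $b$ (resp. $-\partial_\alpha \varphi \geq 0$ at $a$). The hardest step I anticipate is precisely this supersolution boundary analysis, where the near-optimal trajectory can spend time on $\partial[a,b]$ and the contribution of the reflection term $l$ to the running cost must be tracked carefully as $s \to 0^+$ to ensure that neither a spurious Lagrangian penalty nor a spurious gradient sign is introduced.
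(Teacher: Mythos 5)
The paper does not actually prove this proposition: it is imported verbatim as a restatement of \cite[Theorem 5.1]{Ishii2011}, with the surrounding text only remarking that existence via the control representation ``is well known in the literature.'' Your proposal is therefore not an alternative to the paper's argument but a reconstruction of the cited one, and its skeleton --- dynamic programming principle from concatenation/restriction of Skorokhod solutions, barrier/coercivity bounds for the initial-time limit, Legendre duality for the interior viscosity inequalities, and a case analysis on the sign of $\partial_\alpha\varphi$ at $a,b$ for the relaxed Neumann conditions --- is exactly the strategy of Ishii's proof. Two places in your sketch are loose enough to matter. First, in the subsolution argument at $b$ you cannot discard the whole displacement term $\partial_\alpha\varphi(b,t)\bigl(\eta_\varepsilon(s)-b\bigr)$ just because it is nonpositive: that would only yield $\partial_t\varphi\le 0$, which is strictly weaker than $\partial_t\varphi\le\tilde H(b,\partial_\alpha\varphi)$ since $\tilde H(b,\cdot)$ attains the negative minimum $-\bigl(\sqrt{\tilde\Phi^+(b)}-\sqrt{\tilde\Phi^-(b)}\bigr)^2$. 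You must decompose $\eta_\varepsilon(s)-b=\int_0^s v_\varepsilon\,d\tau-\int_0^s l_\varepsilon\, n(\eta_\varepsilon)\,d\tau$, discard only the reflection integral (using $l_\varepsilon\ge0$, $n=+1$ near $b$, and $\partial_\alpha\varphi(b,t)>0$), and keep $\int_0^s\bigl(\partial_\alpha\varphi\, v_\varepsilon-\tilde L(\eta_\varepsilon,v_\varepsilon)\bigr)d\tau\le\int_0^s\tilde H(\eta_\varepsilon,\partial_\alpha\varphi)\,d\tau$ for the Fenchel step; your phrase ``taking the supremum over $v_\varepsilon$'' suggests this is what you intend, but as written the inequality chain is ambiguous. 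Symmetrically, in the supersolution case at $b$ with $\partial_\alpha\varphi(b,t)<0$, inward constant controls alone only give $\partial_t\varphi\ge\sup_{v\le0}\bigl(\partial_\alpha\varphi\, v-\tilde L(b,v)\bigr)$, which can fall short of $\tilde H(b,\partial_\alpha\varphi)$ when the maximizing velocity is positive; you also need the outward constant controls $v_0>0$ (for which $\eta\equiv b$ and $l\equiv v_0$), combined with $0\ge\partial_\alpha\varphi\, v_0$, to recover the full supremum. Finally, the equicontinuity step in your continuity argument uses uniform continuity of $\tilde L(\cdot,v)$ in the state, which fails uniformly over unbounded $v$; this is repaired by the a priori velocity bound on near-optimal controls that the paper records separately in Lemma \ref{lem:velbd}.
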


\subsection{Convergence of $\wrh$ to $w$ as $h\to 0^+$}
Let $h>0 $ and $r \in \mathbb{R}$ be small. Suppose $\wrh : \torh  \times [0, \infty) \to \mathbb{R}$ is the solution to  \eqref{eqn:1dv_h} with initial data $\wrh(r+\aldh, 0)=w_0(r+\aldh)$ for any $r+\aldh \in \torh$. Define the upper semicontinuous (USC) envelope of $\wrh$ by
\begin{equation}\label{eqn:defubar}
\overline{w}(\alpha, t): = \limsup_{h \to 0^+, \, r \to 0, \, \alhi \to \alpha, \, r+\alhi \in \torh , \,\ti\to t} \wrh(r+\alhi, \ti )
\end{equation}
for $(\alpha, t) \in [a, b] \times [0, \infty)$, and the lower semicontinuous (LSC) envelope of $\wrh$ by
\begin{equation}\label{eqn:defuunder}
\underline{w}(\alpha, t): = \liminf_{h \to 0^+, \, r \to 0, \, \alhi \to \alpha, \, r+\alhi \in \torh , \,\ti\to t} \wrh(r+\alhi, \ti)
\end{equation}
for $(\alpha, t) \in [a, b] \times [0, \infty)$.

\begin{thm}\label{thm:wbarsubsln}
Let $u_0 \in  C(\overline{\Omega})$, and let $\wrh : \torh  \times [0,\infty) \to \mathbb{R}$ be the solution to \eqref{eqn:1dv_h}. Then, the USC envelope $\overline{w}:  [a, b] \times [0, \infty) \to \mathbb{R}$ is a subsolution to \eqref{HJE_NM}, and the LSC envelope $\underline{w}: [a, b] \times [0, \infty) \to \mathbb{R}$ is a supersolution to \eqref{HJE_NM}.
\end{thm}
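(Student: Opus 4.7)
The plan is to verify the Ishii viscosity conditions for the Neumann problem \eqref{HJE_NM} directly from the discrete monotone scheme \eqref{eqn:1dv_h}, following the Barles--Souganidis convergence paradigm adapted to a one-sided boundary. The supersolution statement for $\underline{w}$ is entirely parallel to the subsolution statement for $\overline{w}$ (reverse every inequality and replace the Ishii min condition by the max condition), so I will sketch only the latter.

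First, I would reduce to a discrete maximum. The contraction in Proposition \ref{prop:propdis}(ii) gives a uniform $L^\infty$ bound, so $\overline{w}$ is a bounded USC function on $[a,b]\times[0,\infty)$. Fix $\varphi\in C^1([a,b]\times(0,\infty))$ and assume $\overline{w}-\varphi$ attains a strict local maximum at $(\alpha_0,t_0)\in[a,b]\times(0,\infty)$. By the definition \eqref{eqn:defubar} together with a standard penalty perturbation of $\varphi$ by $|\alpha-\alpha_0|^2+|t-t_0|^2$, I obtain sequences $h_n\to 0^+$, $r_n\to 0$ and $(r_n+\alpha_n,t_n)\to(\alpha_0,t_0)$ with $r_n+\alpha_n\in\tornhn$ such that $(r_n+\alpha_n,t_n)$ is a local maximizer of $\wrh-\varphi$ on $\tornhn\times(0,\infty)$ and $\wrh(r_n+\alpha_n,t_n)\to\overline{w}(\alpha_0,t_0)$. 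Using Proposition \ref{prop:propdis}(iii) to equate $\partial_t\wrh=\partial_t\varphi$ at the maximizer, and the maximizer relation together with the monotonicity of $x\mapsto e^x$ to dominate the divided differences of $\wrh$ by those of $\varphi$, the discrete equation \eqref{eqn:1dv_h} yields
\begin{equation*}
\partial_t\varphi(r_n+\alpha_n,t_n)\leq \mathbbm{1}^+_n\,\tilde{\Phi}^+(r_n+\alpha_n)\left(e^{D^+_n\varphi}-1\right)+\mathbbm{1}^-_n\,\tilde{\Phi}^-(r_n+\alpha_n)\left(e^{D^-_n\varphi}-1\right),
\end{equation*}
where $\mathbbm{1}^\pm_n$ are the indicators in \eqref{eqn:1dv_h} and $D^\pm_n\varphi=(\varphi(r_n+\alpha_n\pm h_n,t_n)-\varphi(r_n+\alpha_n,t_n))/h_n$.

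Next I pass $n\to\infty$ and split into cases. When $\alpha_0\in(a,b)$, both indicators equal $1$ for large $n$, Taylor expansion gives $D^\pm_n\varphi=\pm\partial_\alpha\varphi(\alpha_0,t_0)+O(h_n)$, and the limit is the pointwise PDE inequality $\partial_t\varphi(\alpha_0,t_0)\leq\tilde{H}(\alpha_0,\partial_\alpha\varphi(\alpha_0,t_0))$. At $\alpha_0=b$, along a subsequence either $\alpha_n$ stays at least one grid step inside (and the interior argument applies) or $\alpha_n$ is the right-most grid point of $\tornhn$; in the latter case the forward-difference indicator vanishes and the limit reads
\[
\partial_t\varphi(b,t_0)\leq\tilde{\Phi}^-(b)\left(e^{-\partial_\alpha\varphi(b,t_0)}-1\right).
\]
If $\partial_\alpha\varphi(b,t_0)\leq 0$ this is precisely the Ishii Neumann sub-condition at $b$ (outward normal $+1$). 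If $\partial_\alpha\varphi(b,t_0)>0$, then $\tilde{\Phi}^+(b)(e^{\partial_\alpha\varphi(b,t_0)}-1)>0$ and adding this nonnegative quantity to the right-hand side shows $\partial_t\varphi(b,t_0)<\tilde{H}(b,\partial_\alpha\varphi(b,t_0))$, so the PDE inequality holds. Either way $\min\{\partial_t\varphi-\tilde{H},\,\partial_\alpha\varphi\}\leq 0$ at $(b,t_0)$, and the symmetric analysis at $a$ (outward normal $-1$) gives $\min\{\partial_t\varphi-\tilde{H},\,-\partial_\alpha\varphi\}\leq 0$.

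The main obstacle is precisely this boundary dichotomy: at a boundary grid point one of the two divided-difference terms drops from the scheme, and it is not a priori obvious which limiting boundary condition should emerge. The key observation, specific to the exponential reaction Hamiltonian, is that the missing term $\tilde{\Phi}^\pm(b)(e^{\pm\partial_\alpha\varphi}-1)$ is nonnegative exactly in the regime where the corresponding Ishii Neumann sub-condition fails, so the surviving one-sided term automatically forces the PDE inequality in the limit. This sign trick is what converts the discrete state-constraint into the Neumann boundary condition and is what distinguishes this argument from the unbounded-domain analysis in \cite{GaoLiu2023}. A minor technical point, routine but worth verifying, is that the penalized maximizers can be chosen inside $\tornhn\times(0,\infty)$; this follows from strictness of the maximum together with the uniform-in-$h$ estimates of Propositions \ref{prop:propdis}--\ref{prop:unibduhpt}.
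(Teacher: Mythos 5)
Your argument for interior times $t_0>0$ is essentially the paper's own proof: the same reduction to discrete local maxima of $\wrnhn-\varphi$, the same use of the first-order condition in $t$ and of scheme monotonicity to replace divided differences of $\wrnhn$ by those of $\varphi$, and the same case analysis at the boundary. Your "sign trick" is the correct mechanism and is logically the same as the paper's: the paper argues by contraposition (if the PDE sub-inequality fails at $a$, then the surviving one-sided inequality forces $-\tilde{\Phi}^-(a)(e^{-\partial_\alpha\varphi}-1)>0$, hence $-\partial_\alpha\varphi(a,t_0)<0$), whereas you split on the sign of $\partial_\alpha\varphi$ directly; both yield the Ishii condition $\min\{\partial_t\varphi-\tilde H,\,\partial\varphi/\partial n\}\le 0$. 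The existence of the discrete maximizing sequence is proved in the paper's appendix using only the definition of the USC envelope, strictness of the maximum and compactness; it does not need Propositions \ref{prop:propdis}--\ref{prop:unibduhpt} as you suggest, but your route would also work.

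The genuine omission is the case $t=0$. The theorem asserts that $\overline w$ is a subsolution of \eqref{HJE_NM} on $[a,b]\times[0,\infty)$, and the comparison principle (Proposition \ref{prop:cpn}) that is applied afterwards in Corollary \ref{cor:1dconverge} is stated on $[a,b]\times[0,T)$ including $t=0$; one therefore must show $\overline w(\alpha,0)=w_0(\alpha)$ (and likewise for $\underline w$). This is not automatic from the definition \eqref{eqn:defubar}, since the limsup is also taken over $t_i\to 0^+$ and the discrete solutions could a priori move away from the data on an $o(1)$ time scale. The paper's Step~1 closes this: for $u_0\in W^{1,\infty}(\Omega)\cap C(\overline\Omega)$ the uniform-in-$h$ bound $|\partial_t\wrh|\le C_0$ (the analogue of Proposition \ref{prop:unibduhpt}, via \cite[Lemma 2.2]{brezis_pazy_1970}) gives $|\wrh(r+\alpha_{h,i},t_i)-w_0(\alpha)|\le C_0|t_i|+\|Du_0\|_{L^\infty}|r+\alpha_{h,i}-\alpha|$, and the general case $u_0\in C(\overline\Omega)$ follows by approximating with Lipschitz data and using the nonexpansive property of Proposition \ref{prop:propdis}(i). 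You should add this step; without it the subsolution/supersolution pair cannot be fed into the comparison principle.
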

\begin{proof}
We prove that the USC envelope $\overline{w}$ is a subsolution to \eqref{HJE_NM} in $[a, b] \times[0, \infty)$. The proof of the fact that $\underline{w}$ is a supersolution to \eqref{HJE_NM} in $[a, b]\times[0, \infty)$ is similar.

    Step 1: For $t=0$, we claim that $\overline{w}(\alpha, 0)=w_0(\alpha)$ for any $\alpha \in [a, b]$. We first prove the claim under the assumption that $u_0 \in W^{1,\infty} (\Omega)\cap C(\overline{\Omega})$, and later extend the result to the case $u_0 \in  C(\overline{\Omega})$.

Suppose $u_0 \in W^{1,\infty} (\Omega)\cap C(\overline{\Omega})$. Then $w_0 \in \mathrm{Lip}([a, b])$ with the Lipschitz constant $\left\|Du_0\right\|_{L^\infty(\Omega)}$. Fix $\alpha \in [a, b]$, and let $r, h >0$. By an argument similar to the proof of Proposition \eqref{prop:unibduhpt}, for any $r+\aldh \in \torh$ and $t\geq 0$,
\begin{equation}\label{eqn:ptbound}
\begin{aligned}
\left|\pt_t \wrh(r+\aldh,t)\right|&\leq \left|\tHrh\left(r+\aldh, w_0(\vxi),w_0\right)\right|\\
&\leq C_0
\end{aligned}
\end{equation}
where the second inequality follows from $w_0\in \mathrm{Lip}([a, b])$, with a constant
\[
C_0=C_0(\left\|Du_0\right\|_{L^\infty(\Omega)}, \Omega, \Phi^+, \Phi^-)>0
\]
independent of $h$. Hence, for any $r+\alhi \in \torh$ and $\ti\geq 0$,
\begin{equation}
\begin{aligned}\label{eqn:wlipt0}
    \left|\wrh(r+\alhi, \ti) -w_0(\alpha)\right|\leq &\left|\wrh(r+\alhi, \ti)  - w_0(r+\alhi)\right| \\
    &+\left|w_0(r+\alhi)-w_0(\alpha)\right|\\
    \leq &C_0|\ti|+\left\|Du_0\right\|_{L^\infty(\Omega)}|r+\alhi-\alpha|,
\end{aligned}
\end{equation}
where $C_0$ comes from \eqref{eqn:ptbound}. Letting $h \to 0^+, r \to 0, \alhi\to \alpha$, and $\ti \to 0$, we obtain
\[
\begin{aligned}
\overline{w}(\alpha,0)&= \limsup_{h \to 0^+, \, r \to 0, \, \alhi \to \alpha, \, r+\alhi \in \torh , \,\ti\to 0} \wrh(r+\alhi, \ti)\\
&= \lim_{h \to 0^+, \, r \to 0, \, \alhi \to \alpha, \, r+\alhi \in \torh , \,\ti\to 0} \wrh(r+\alhi, \ti)\\
&=w_0(\alpha).
\end{aligned}
\]

Now suppose $u_0 \in C(\overline{\Omega})$. Then there exists a sequence $\left\{\uzk\right\}_{\sk\in \mathbb{N}} \subset W^{1,\infty}\cap C(\overline{\Omega})$ such that $|\uzk(x)-u_0(x)|\leq \frac{1}{k}$ for all $x \in \overline{\Omega}$. For $\alpha \in [a, b]$, define $\wzk(\alpha):=\uzk(\vec{x}_0 +\alpha\vec{\nu})$. Let $\wrhk:\torh \to \mathbb{R}$ be the solution to \eqref{eqn:1dv_h} with initial data $\wzk\big|_{\torh}$, and $w_{\sk}:[a, b] \to \mathbb{R}$ be the solution to \eqref{HJE_NM} with initial data $\wzk$.

Since $\left|\uzk(x)-u_0(x)\right|\leq \frac{1}{k}$ for all $x \in \overline{\Omega}$, we have $|\wzk(r+\aldh)-w_0(r+\aldh)|\leq\frac{1}{k}$ for any $r+\aldh \in \torh$. Then, by Proposition \ref{prop:propdis}, it follows that for any $h>0$,
\begin{equation}\label{eqn:pdistbd}
    \left| \wrhk(r+\alhi, \ti)-\wrh(r+\alhi, \ti) \right| \leq \left\|\wzk -w_0\right\|_{L^\infty\left(\torh\right)} \leq \frac{1}{k}.
\end{equation}

Therefore, for any $r+\alhi \in \torh$ and $\ti\geq 0$,
\begin{equation}
\begin{aligned}
    &\left|\wrh(r+\alhi, \ti)-w_0(\alpha)\right| \\
    \leq & \left|\wrh(r+\alhi, \ti) - \wrhk (r+\alhi, \ti)\right|+\left|\wrhk(r+\alhi, \ti)-\wzk(\alpha)\right|+\left|\wzk(\alpha)-w_0(\alpha)\right|\\
    \leq & \frac{2}{k}+ C_0|\ti|+\left\|D\uzk\right\|_{L^\infty(\Omega)}|r+\alhi-\alpha|,
\end{aligned}
\end{equation}
where the last inequality uses \eqref{eqn:wlipt0} and \eqref{eqn:pdistbd}. Taking the limsup as $h \to 0^+, r \to 0, \alhi\to \alpha$, and $\ti \to 0$, we find
\[
\limsup_{h \to 0^+, \, r \to 0, \, \alhi \to \alpha, \, r+\alhi \in \torh , \,\ti\to 0}\left|\wrh(r+\alhi, \ti)-w_0(\alpha)\right| \leq\frac{2}{k}
\]
for any $k\in \mathbb{N}$. Hence, the claim follows.

    Step 2: For $t>0$, let $\varphi \in C^1\left([a, b] \times [0, \infty)\right)$ such that $\overline{w}-\varphi$ attains a strict local max at $(\alpha_0, t_0) \in [a, b] \times (0, \infty)$. We claim that there exists a sequence $\{h_{\sn}\}_{\sn=1}^{\infty}$ with $\lim_{n \to \infty} h_{\sn} = 0$ and a sequence $\{(r_{\sn}+\alpha_{\sn}, t_{\sn}) \}_{\sn=1}^{\infty} \subset \tornhn \times (0, \infty)$ with $\lim_{n \to \infty} r_{\sn} =0$, $\lim_{n \to \infty} \alpha_{\sn} = \alpha_0$ and $\lim_{n \to \infty} t_{\sn} = t_0$ such that
    \[
    \wrnhn\left(r_{\sn}+\alpha_{\sn}, t_{\sn}\right) \to \overline{w} (\alpha_0, t_0)
    \]
 and $(r_{\sn}+\alpha_{\sn},t_{\sn})$ is a local maximum of $\wrnhn-\varphi$. A proof is provided in \ref{sec:localmax} for the reader's convenience.
Hence, we have
\[
\partial_t \varphi \left(r_{\sn}+\alpha_{\sn}, t_{\sn}\right)=\partial_t \wrnhn \left(r_{\sn}+\alpha_{\sn}, t_{\sn}\right)
\]
and
\[
\wrnhn(r_{\sn}+\alpha_{\sn} \pm h_{\sn}, t_{\sn})-\varphi(r_{\sn}+\alpha_{\sn}\pm h_{\sn}, t_{\sn}) \leq \wrnhn(r_{\sn}+\alpha_{\sn}, t_{\sn}) -\varphi(r_{\sn}+\alpha_{\sn}, t_{\sn})
\]
if $r_{\sn}+\alpha_{\sn} \pm h_{\sn} \in \tornhn$. This implies
\begin{equation}\label{eqn:1dphi}
\begin{aligned}
&\partial_t \varphi \left(r_{\sn}+\alpha_{\sn}, t_{\sn}\right)-\mathbbm{1}_{\{\alpha_{\sn}
\leq (k_{\scb,{\sr}_{\sn}, {\V}_{\sn}}-1)h_{\sn}\}}  \tilde{\Phi}^+(r_{\sn}+\alpha_{\sn}) \left( e^{\frac{\varphi(r_{\sn}+\alpha_{\sn}+ h_{\sn}, t_{\sn})-\varphi(r_{\sn}+\alpha_{\sn}, t_{\sn})}{h_{\sn}}} - 1\right)\\ &\qquad  \qquad\qquad \qquad-\mathbbm{1}_{\{\alpha_{\sn}
\geq (k_{\sa, {\sr}_{\sn}, {\V}_{\sn}}+1)h_{\sn}\}}\tilde{\Phi}^-(r_{\sn}+\alpha_{\sn})\left( e^{  \frac{\varphi(r_{\sn}+\alpha_{\sn}- h_{\sn}, t_{\sn})-\varphi(r_{\sn}+\alpha_{\sn}, t_{\sn})}{h_{\sn}} } - 1\right)\\
\leq&\partial_t \wrnhn \left(r_{\sn}+\alpha_{\sn}, t_{\sn}\right)-\mathbbm{1}_{\{\alpha_{\sn}
\leq (k_{\scb, {\sr}_{\sn}, {\V}_{\sn}}-1)h_{\sn}\}}  \tilde{\Phi}^+(r_{\sn}+\alpha_{\sn}) \left( e^{\frac{\wrnhn(r_{\sn}+\alpha_{\sn}+ h_{\sn}, t_{\sn})-\wrnhn(r_{\sn}+\alpha_{\sn}, t_{\sn})}{h_{\sn}}} - 1\right)\\ &\qquad \qquad \qquad\qquad\quad -\mathbbm{1}_{\{\alpha_{\sn}
\geq (k_{\sa, {\sr}_{\sn},{\V}_{\sn}}+1)h_{\sn}\}}\tilde{\Phi}^-(r_{\sn}+\alpha_{\sn})\left( e^{  \frac{\wrnhn(r_{\sn}+\alpha_{\sn}- h_{\sn}, t_{\sn})-\wrnhn(r_{\sn}+\alpha_{\sn}, t_{\sn})}{h_{\sn}} } - 1\right)\\
=&0.
\end{aligned}
\end{equation}

There are two cases to be considered.
\begin{enumerate}
    \item Suppose $\alpha_0 \in (a, b)$. Then it follows from \eqref{eqn:endpts} that for $n$ large enough, we have  $ (k_{\sa, \sr_{\sn}, \V_{\sn}} +1)h_{\sn}\leq \alpha_{\sn} \leq (k_{\scb, \sr_{\sn}, \V_{\sn}}-1)h_{\sn}$, and \eqref{eqn:1dphi} becomes
\[
\begin{aligned}
\partial_t \varphi \left(r_{\sn}+\alpha_{\sn}, t_{\sn}\right)- &\tilde{\Phi}^+(r_{\sn}+\alpha_{\sn}) \left( e^{\frac{\varphi(r_{\sn}+\alpha_{\sn}+ h_{\sn}, t_{\sn})
-\varphi(r_{\sn}+\alpha_{\sn}, t_{\sn})}{h_{\sn}}} - 1\right)\\
& \qquad \qquad -\tilde{\Phi}^-(r_{\sn}+\alpha_{\sn})\left( e^{  \frac{\varphi(r_{\sn}+\alpha_{\sn}- h_{\sn}, t_{\sn})-\varphi(r_{\sn}+\alpha_{\sn}, t_{\sn})}{h_{\sn}} } - 1\right) \leq 0.
\end{aligned}
\]
    Therefore, by sending $n \to \infty$, we obtain

\[
\partial_t \varphi \left(\alpha_0, t_0\right) - \tilde{\Phi}^+(\alpha_0)\left(e^{ \partial_\alpha\varphi(\alpha_0,t_0)}- 1 \right)- \tilde{\Phi}^-(\alpha_0)\left(e^{- \partial_\alpha\varphi(\alpha_0,t_0)}- 1 \right) \leq 0,
\]
which shows that $\overline{w}$ is a subsolution to \eqref{HJE_NM} in $(a, b) \times (0, \infty)$.
\item Suppose $\alpha_0 = a$ or $\alpha_0 = b$. Without loss of generality, we can assume $\alpha_0=a$ and the proof for the case where $\alpha_0=b$ is similar. By the definition of viscosity subsolution to \eqref{HJE_NM}, we need to show
\begin{equation}\label{eqn:1dsubsln}
\min \left\{\partial_t \varphi \left(a, t_0\right) - \tilde{\Phi}^+(a)\left(e^{ \partial_\alpha\varphi(a,t_0)}- 1 \right)- \tilde{\Phi}^-(a)\left(e^{- \partial_\alpha\varphi(a,t_0)}- 1 \right), -\partial_\alpha \varphi(a, t_0)\right\} \leq 0.
\end{equation}
Taking a subsequence if needed, there are two subcases to be considered.
\begin{enumerate}
    \item If  $(k_{\sa, \sr_{\sn}, \V_{\sn}}+1)h_{\sn} \leq \alpha_{\sn} \leq (k_{\scb, \sr_{\sn}, \V_{\sn}}-1)h_{\sn}$ for $n$ large enough, then like in Case (1), we have
\[
\partial_t \varphi \left(a, t_0\right) - \tilde{\Phi}^+(a)\left(e^{ \partial_\alpha\varphi(a,t_0)}- 1 \right)- \tilde{\Phi}^-(a)\left(e^{- \partial_\alpha\varphi(a,t_0)}- 1 \right) \leq 0,
\]
which implies \eqref{eqn:1dsubsln} holds.
     \item If $\alpha_{\sn} = k_{\sa, \sr_{\sn}, \V_{\sn}} h_{\sn}$ for $n$ large enough, then \eqref{eqn:1dphi} becomes
\[
\partial_t \varphi \left(r_{\sn}+\alpha_{\sn}, t_{\sn}\right)- \tilde{\Phi}^+(r_{\sn}+\alpha_{\sn}) \left( e^{\frac{\varphi(r_{\sn}+\alpha_{\sn}+ h_{\sn}, t_{\sn})-\varphi(r_{\sn}+\alpha_{\sn}, t_{\sn})}{h_{\sn}}} - 1\right) \leq 0.
\]
Send $n \to \infty$ to obtain
\begin{equation} \label{eqn:1d1side}
    \begin{aligned}
    \partial_t \varphi \left(a, t_0\right) - \tilde{\Phi}^+(a)\left(e^{ \partial_\alpha\varphi(a,t_0)}- 1 \right) \leq 0.
    \end{aligned}
\end{equation}
If $\partial_t \varphi \left(a, t_0\right) - \tilde{\Phi}^+(a)\left(e^{ \partial_\alpha\varphi(a,t_0)}- 1 \right)- \tilde{\Phi}^-(a)\left(e^{- \partial_\alpha\varphi(a,t_0)}- 1 \right) \leq 0$, then we are done. Suppose not, that is, we have
\begin{equation}
\partial_t \varphi \left(a, t_0\right) - \tilde{\Phi}^+(a)\left(e^{ \partial_\alpha\varphi(a,t_0)}- 1 \right)- \tilde{\Phi}^-(a)\left(e^{- \partial_\alpha\varphi(a,t_0)}- 1 \right) > 0.
\end{equation}
Then by \eqref{eqn:1d1side}, we have $- \tilde{\Phi}^-(a)\left(e^{- \partial_\alpha\varphi(a,t_0)}- 1 \right) > 0$, which implies $-\partial_\alpha \varphi(a, t_0)<0$ and hence \eqref{eqn:1dsubsln} holds.
\end{enumerate}
\end{enumerate}

\end{proof}

By applying the comparison principle and the representation formula for solutions to \eqref{HJE_NM}, as stated in Proposition \ref{prop:cpn} and Proposition \ref{thm:rf}, we establish the convergence of the discrete HJE solutions to the continuous HJE solution. This result is summarized in the following corollary.
\begin{cor}\label{cor:1dconverge}
Let $\overline{w}$ be as defined in \eqref{eqn:defubar} and $\underline{w}$ as defined in \eqref{eqn:defuunder}. Let $w$ be the solution to \eqref{HJE_NM}. Then
\[
\overline{w}=\underline{w}=w.
\]
\end{cor}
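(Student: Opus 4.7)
The plan is to deduce the corollary directly from the comparison principle (Proposition \ref{prop:cpn}), the existence/representation result (Proposition \ref{thm:rf}), and the sub-/super-solution property (Theorem \ref{thm:wbarsubsln}), together with the matching of initial data that was already established in Step 1 of the proof of Theorem \ref{thm:wbarsubsln}.

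First I would note that, directly from the definitions \eqref{eqn:defubar} and \eqref{eqn:defuunder}, one has $\underline{w} \leq \overline{w}$ on $[a,b] \times [0,\infty)$, with $\overline{w} \in \mathrm{USC}([a,b]\times[0,\infty))$ and $\underline{w} \in \mathrm{LSC}([a,b]\times[0,\infty))$. Next, Step 1 in the proof of Theorem \ref{thm:wbarsubsln} actually produces a two-sided limit at $t=0$, i.e.\ $\overline{w}(\alpha,0) = \underline{w}(\alpha,0) = w_0(\alpha)$ for every $\alpha \in [a,b]$; the argument there is symmetric in $\limsup$ and $\liminf$, so the same bound gives both envelopes the value $w_0(\alpha)$ at $t=0$.

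Now let $w$ be the viscosity solution to \eqref{HJE_NM} produced by the representation formula \eqref{eqn:1dvf} in Proposition \ref{thm:rf}; in particular, $w \in C([a,b]\times[0,\infty))$ and $w(\cdot,0)=w_0(\cdot)$. Since $\overline{w}$ is a USC subsolution of \eqref{HJE_NM} (by Theorem \ref{thm:wbarsubsln}) and $w$ is a continuous (hence LSC) supersolution with $\overline{w}(\cdot,0) = w_0(\cdot) = w(\cdot,0)$, the comparison principle of Proposition \ref{prop:cpn}, applied on $[a,b]\times[0,T)$ for arbitrary $T>0$, yields $\overline{w} \leq w$ on $[a,b]\times[0,\infty)$. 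Symmetrically, $\underline{w}$ is a LSC supersolution and $w$ is a USC subsolution with matching initial data, so Proposition \ref{prop:cpn} also gives $w \leq \underline{w}$ on $[a,b]\times[0,\infty)$.

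Combining the chain $\overline{w} \leq w \leq \underline{w} \leq \overline{w}$ forces equality throughout, which is exactly the claim $\overline{w} = \underline{w} = w$. I do not anticipate a serious obstacle here: the comparison principle (Proposition \ref{prop:cpn}) is the workhorse, and the only point that requires any care is the initial-trace matching, which is already handled inside the proof of Theorem \ref{thm:wbarsubsln}. As a byproduct, since $\overline{w} = \underline{w}$ is simultaneously USC and LSC, the pointwise limit $\lim \wrh(r+\alhi,\ti) = w(\alpha,t)$ exists along any admissible sequence $(h, r, \alhi, \ti)$ with $h, r \to 0^+$, $\alhi \to \alpha$, $\ti \to t$, and $r+\alhi \in \torh$, giving the convergence of the discrete scheme to the Neumann-type continuous HJE.
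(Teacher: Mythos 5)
Your argument is correct and is exactly the route the paper intends: the paper proves this corollary in one line by invoking the comparison principle (Proposition \ref{prop:cpn}) together with the representation formula (Proposition \ref{thm:rf}), which is precisely the sandwich $\overline{w}\le w\le \underline{w}\le \overline{w}$ you carry out, with the initial-trace matching supplied by Step 1 of Theorem \ref{thm:wbarsubsln}. Your write-up simply makes explicit the details the paper leaves implicit, including the correct observation that Step 1 yields a two-sided limit at $t=0$.
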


\bigskip

\section{Large deviation principle and macroscopic mean-field limit}\label{sec:1dLDP}

In this section, we continue with the same setting as in the previous one, namely $N = 2$ and $M = 1$. We establish a large deviation principle at a single time, assuming the rescaled reaction processes start at positions restricted to the line segment $S_0$. The same result remains valid for general $N \geq 2$ with $M = 1$.

From \eqref{eqn:1dvf}, \eqref{eqn:1dpath} and Proposition \ref{thm:rf}, the viscosity solution $w:[a,b] \times [0, \infty) \to \mathbb{R}$ to \eqref{HJE_NM} can be represented by
\begin{equation}
w(\alpha, t) = \sup_{y\in[a, b]}\left\{w_0(y)-I\left(y;\alpha,t\right) \right\},
\end{equation}
where
\begin{equation}\label{eqn:defIratef}
    \quad I\left(y;\alpha,t\right):=\inf \left\{\left.\int_0^t  \tilde{L}\left(\eta(s), v(s) \right)ds\right| \eta(t)=y, (\eta, v, l) \in {\rm SP}(\alpha) \right\},
\end{equation}
and $\eta, v, l$ solves \eqref{eqn:1dpath}. Recall here $\tilde{L}$ is the Legendre transform of $ \tilde{H}$ in \eqref{eqn:Htilde}.

\begin{thm}\label{thm:1dLDP}
Let $\vx_0 \in \overline{\Omega}$ and $h>0$. Suppose $\vxzh = \vx_0+(\rh+\alzh)\vec{\nu}
\in \overline{\Omega}$ where $\rh \in \mathbb{R}$ satisfies $|\rh| < h$, $\alzh = \kzh h$ for some $\kzh \in \mathbb{Z}$, and
\[
\lim_{h \to 0^+}\rh=0, \quad \lim_{h\to 0^+}\alzh = 0.
\] Then the chemical reaction process $\cv(t)$ with $\cv(0)=\vxzh$ at each time $t$ satisfies the large deviation principle with the good rate function $\tilde{I}\left(\vec{y}; \vx_0, t\right):= I\left(\frac{(\vec{y}-\vx_0)\cdot \vec{\nu}}{|\vec{\nu}|^2};0,t\right)$, where $I$ is defined in \eqref{eqn:defIratef}.
\end{thm}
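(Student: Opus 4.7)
The plan is to deduce the LDP from Varadhan's inverse (Bryc-type) lemma. The two required inputs are: (i) exponential tightness of $\cv(t)$, already established in \eqref{eq:exp_tight} as a consequence of Lemma \ref{lem_mass}; and (ii) existence of the logarithmic moment generating limit
\[
\Lambda(f):=\lim_{h\to 0^+} h\log \bE^{\vxzh}\!\bigl[e^{f(\cv(t))/h}\bigr]
\]
for every $f\in C_b(\overline\Omega)$. Fixing such an $f$ and taking $u_0=f$ as the initial datum in \eqref{upwind0}, the WKB substitution $\zh=\exp(\uh/h)$ linearises the equation to the backward Kolmogorov equation, identifying
\[
\uh(\vxv,t)=h\log \bE^{\vxv}\!\bigl[e^{f(\cv(t))/h}\bigr],\qquad \vxv\in\Omegah.
\]

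Since $M=1$ and $\cv$ only jumps along $\pm\vec\nu$, starting at $\vxzh=\vx_0+(\rh+\alzh)\vec\nu\in S_0$ confines the trajectory to the one-dimensional lattice $\horh\subset S_0$ for all $t\ge 0$. Reparametrising along this line reduces \eqref{upwind0} to the 1D discrete equation \eqref{eqn:1dv_h} for $\wrh$, with $\uh(\vxzh,t)=\wrh(\rh+\alzh,t)$. Because $\rh\to 0$ and $\alzh\to 0$, Corollary \ref{cor:1dconverge} yields $\wrh(\rh+\alzh,t)\to w(0,t)$, where $w$ is the unique viscosity solution of \eqref{HJE_NM} with initial condition $w_0(\alpha)=f(\vx_0+\alpha\vec\nu)$. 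The Lax--Oleinik representation in Proposition \ref{thm:rf} together with the definition \eqref{eqn:defIratef} of $I$ then gives
\[
\Lambda(f)=\sup_{y\in[a,b]}\!\bigl\{f(\vx_0+y\vec\nu)-I(y;0,t)\bigr\}.
\]

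With both inputs in hand, Varadhan's inverse lemma produces the LDP with rate function $\tilde I(\vec y;\vx_0,t)=\sup_{f\in C_b(\overline\Omega)}\{f(\vec y)-\Lambda(f)\}$. To match the expression in the theorem, I would split on whether $\vec y\in S_0$. If $\vec y\notin S_0$, separating $\vec y$ from the closed set $S_0$ by a bounded continuous bump of arbitrary height gives $\tilde I(\vec y;\vx_0,t)=+\infty$. If $\vec y=\vx_0+\beta\vec\nu\in S_0$ with $\beta=(\vec y-\vx_0)\cdot\vec\nu/|\vec\nu|^2\in[a,b]$, the bound $\tilde I(\vec y;\vx_0,t)\le I(\beta;0,t)$ is immediate by taking $y=\beta$ inside $\Lambda(f)$; the reverse inequality is obtained by testing against $f$ depending only on the $\vec\nu$-coordinate and sharply peaked at $\vec y$, invoking lower semicontinuity of $I(\cdot;0,t)$ on the compact interval $[a,b]$. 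Goodness of $\tilde I$ follows from lower semicontinuity and compactness of the sublevel sets of $I(\cdot;0,t)$.

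The main non-routine step has already been dispatched by Corollary \ref{cor:1dconverge}: establishing that the nonlinear discrete semigroup, despite carrying a state-constraint boundary, converges in the thermodynamic limit to the continuous Lax--Oleinik semigroup associated with the Neumann problem \eqref{HJE_NM}. Once this is available, the LDP identification is essentially assembly. The only delicate point is the behaviour at the endpoints $\beta\in\{a,b\}$, where the Skorokhod reflection term $l$ in \eqref{eqn:1dpath} is active; however this enters only through the already-known variational characterisation of $w$ and causes no difficulty in the Fenchel inversion, since $I(\cdot;0,t)$ is lower semicontinuous up to the boundary.
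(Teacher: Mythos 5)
Your proposal follows essentially the same route as the paper's proof: exponential tightness from \eqref{eq:exp_tight}, the WKB identification $\uh(\vxzh,t)=h\log\bE^{\vxzh}[e^{u_0(\cv(t))/h}]$, reduction to the one-dimensional quantity $\wrh(\rh+\alzh,t)$, convergence to $w(0,t)$ via Corollary \ref{cor:1dconverge}, the Lax--Oleinik representation of $w$, and Bryc's inverse Varadhan lemma to conclude. The only difference is that you spell out the Legendre--Fenchel identification of the rate function (including the $\vec y\notin S_0$ case) more explicitly than the paper, which simply invokes the equivalence between the LDP and the limit formula for all bounded continuous test functions; this is a harmless refinement, not a different argument.
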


\begin{proof}
    For $h>0$ small enough, the state space for $\cv(t)$ is
    \[
    \hat{\Omega}_{\sr_{\V}, \V}=\left\{\vec{x}_0 +(\rh+\alpha) \vec{\nu}: \alpha =kh, k  \in \mathbb{Z}, k_{\sa, {\sr}_{\V}, \V} \leq k \leq k_{\scb, {\sr}_{\V}, \V}\right\} \subset S_0,
    \]
    where $k_{\sa, {\sr}_{\V}, \V}$ and $k_{\scb, {\sr}_{\V}, \V}$ are some integers and $S_0 \subset \overline{\Omega}$ is the maximal closed line segment in the direction $\vec{\nu}$ that contains $\vx_0$ defined in \eqref{eqn:defs0}. Hence $\cv(t)=\vx_0+(\rh+\aluh(t))\vec{\nu}$ for some $\aluh(t) = k^{\V}(t)h$ where $k^{\V}(t) \in \mathbb{Z}$ and $\aluh(0)=\alzh=\kzh h$.

We now prove the large deviation principle for process $\cv(t)$ at a single time through Varadhan's inverse lemma \cite{Bryc_1990}. Note that the exponential tightness of $\cv(t)$ for each $t\in[0,T]$ is given in \eqref{eq:exp_tight}. Then we only need to verify the convergence of
the discrete Varadhan's nonlinear semigroup to the variational representation of the continuous viscosity solution.


From Varadhan's lemma \cite{Varadhan_1966} on the necesssary condition of the large deviation principle and Brysc's theorem \cite{Bryc_1990} on the sufficient condition, we know $\cv$ satisfies the large deviation principle with the good rate function $\tilde{I}(y)$ if and only if for any bounded continuous function $u_0$,

\begin{equation}\label{eqn:LDP1dw0}
\lim_{h \to 0^+}h \log \mathbb{E}^{\vxzh}\left(e^\frac{u_0\left(\cv(t)\right)}{h}\right) =\sup_{\vec{y}\in S_0}\left\{u_0(\vec{y})-\tilde{I}\left(\vec{y}; \vx_0, t\right) \right\}.
\end{equation}
Hence, it suffices to prove \eqref{eqn:LDP1dw0}.

    From the WKB reformulation for the backward equation, we know
    \[
    \uh(\vxzh, t) = h \log \mathbb{E}^{\vxzh}\left(e^\frac{u_0\left(\cv(t)\right)}{h}\right),
    \]
    and hence by the definition of $\wrh$, we have
    \[
    \wrh(\rh+\alzh, t)=h\log \mathbb{E}^{\rh+\alzh}\left(e^\frac{w_0\left(\rh+\aluh(t)\right)}{h}\right).
    \]
    Since $\lim_{h \to 0^+}\rh = 0$ and $\lim_{h\to 0^+}\alzh = 0$, from Corollary \ref{cor:1dconverge},
    \[
    \lim_{h\to 0^+}{\wrh(\rh+\alzh,t)}=w(0,t),
    \]
    that is,
    \[
    \begin{aligned}
\lim_{h \to 0^+}h \log \mathbb{E}^{\vxzh}\left(e^\frac{u_0\left(\cv(t)\right)}{h}\right)&= \lim_{h \to 0^+} h\log \mathbb{E}^{\rh+\alzh}\left(e^\frac{w_0\left(\rh+\aluh(t)\right)}{h}\right)\\&=w(0, t)\\& = \sup_{\alpha\in[a, b]}\left\{w_0(\alpha)-I\left(\alpha;0,t\right) \right\}\\&=\sup_{\alpha\in[a, b]}\left\{u_0(\vx_0+\alpha\vec{\nu})-I\left(\alpha;0,t\right) \right\}\\
&=\sup_{\vec{y}\in S_0}\left\{u_0(\vec{y})-I\left(\frac{(\vec{y}-\vx_0)\cdot \vec{\nu}}{|\vec{\nu}|^2};0,t\right) \right\},
    \end{aligned}
    \]
where $S_0 =  \left\{\vec{x}_0 + \alpha \vec{\nu}: a\leq \alpha \leq b\right\}$ is the maximal closed line segment in the direction $\vec{\nu}$ that contains $\vx_0$.
\end{proof}

Next, we show the uniqueness of the mean-field limit.

\begin{lem}\label{lem:lzero}
    For fixed $\alpha \in [a, b]$, $\tilde{L}(\alpha, s)=0$ if and only if $s=\partial_p\tilde{H}(\alpha, 0) = \tilde{\Phi}^+(\alpha)-\tilde{\Phi}^-(\alpha).$
\end{lem}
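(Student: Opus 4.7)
My plan is to use the standard Fenchel--Young equality-case characterization, exploiting the fact that $\tilde{H}(\alpha, 0) = 0$ and that $\tilde{H}(\alpha, \cdot)$ is smooth, strictly convex and coercive in $p$ (by \eqref{eq:lower} both $\tilde{\Phi}^+(\alpha), \tilde{\Phi}^-(\alpha) \geq c_0 > 0$).

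First, I would compute $\tilde{H}(\alpha, 0) = \tilde{\Phi}^+(\alpha)(e^0 - 1) + \tilde{\Phi}^-(\alpha)(e^{-0} - 1) = 0$. Since $\tilde{L}(\alpha, s) = \sup_p \{sp - \tilde{H}(\alpha, p)\}$, plugging in the test point $p = 0$ immediately gives the lower bound $\tilde{L}(\alpha, s) \geq s \cdot 0 - \tilde{H}(\alpha, 0) = 0$ for every $s \in \mathbb{R}$, so $\tilde{L}(\alpha, \cdot) \geq 0$.

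Next, I would identify when equality holds. Because $p \mapsto \tilde{\Phi}^+(\alpha) e^p + \tilde{\Phi}^-(\alpha) e^{-p}$ is strictly convex and coercive, so is $\tilde{H}(\alpha, \cdot)$, and for each $s$ the supremum defining $\tilde{L}(\alpha, s)$ is attained at a unique critical point $p^*(s)$ characterized by $s = \partial_p \tilde{H}(\alpha, p^*) = \tilde{\Phi}^+(\alpha) e^{p^*} - \tilde{\Phi}^-(\alpha) e^{-p^*}$. Therefore $\tilde{L}(\alpha, s) = 0$ iff the maximizer $p^*(s)$ equals $0$ (otherwise, by strict convexity of the map $p \mapsto \tilde H(\alpha, p) - sp$ and since $p=0$ is not the minimizer, we have $\tilde{L}(\alpha, s) = s \cdot p^*(s) - \tilde{H}(\alpha, p^*(s)) > s\cdot 0 - \tilde{H}(\alpha, 0) = 0$). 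Equivalently, $p^*(s) = 0$ iff $s = \partial_p \tilde{H}(\alpha, 0) = \tilde{\Phi}^+(\alpha) - \tilde{\Phi}^-(\alpha)$, which proves both implications.

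There is essentially no hard step here; the only thing to be careful about is the strict inequality when $s \neq \tilde{\Phi}^+(\alpha) - \tilde{\Phi}^-(\alpha)$, which I would handle either by the above strict-convexity argument or, equivalently, by noting that $\tilde{L}(\alpha, \cdot)$ is itself strictly convex (as the Legendre transform of a smooth strictly convex coercive function) and attains its unique minimum value $0$ at the single point $s = \partial_p \tilde{H}(\alpha, 0)$.
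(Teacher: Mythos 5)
Your proof is correct and follows essentially the same route as the paper's: both arguments rest on the Fenchel--Young first-order characterization of the maximizer (a point $p^\ast$ attains the supremum defining $\tilde{L}(\alpha,s)$ iff $s=\partial_p\tilde{H}(\alpha,p^\ast)$), the fact that $\tilde{H}(\alpha,0)=0$, and strict convexity of $\tilde{H}(\alpha,\cdot)$ to pin down the maximizer uniquely. Your version is marginally more careful in that it explicitly verifies attainment of the supremum via coercivity and spells out the strict inequality $\tilde{L}(\alpha,s)>0$ when $s\neq\tilde{\Phi}^+(\alpha)-\tilde{\Phi}^-(\alpha)$, but there is no substantive difference in approach.
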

\begin{proof}
    We know $\tilde{L}(\alpha, s) = p^\ast \cdot s-\tilde{H}(\alpha, p^\ast)$ for some $p^\ast \in \mathbb{R}$ if and only if $s=\partial_p\tilde{H}(\alpha, p^\ast)$. Moreover, this $p^\ast$ is unique. Indeed, suppose $\tilde{L}(\alpha, s) = p^\ast \cdot s-\tilde{H}(\alpha, p^\ast)= \tilde{p}^\ast \cdot s-\tilde{H}(\alpha, \tilde{p}^\ast)$ for some $\tilde{p}^\ast, p^\ast \in \mathbb{R}$. Then $s= \partial_p\tilde{H}(\alpha, p^\ast) = \partial_p\tilde{H}(\alpha, \tilde{p}^\ast)$. Since $H$ is strictly convex, it follows that $ p^\ast= \tilde{p}^\ast$.

    If $s=\partial_p\tilde{H}(\alpha, 0)$, then $\tilde{L}(\alpha, s) = 0 \cdot s - \tilde{H}(\alpha, 0) = 0$. Conversely, if $\tilde{L}(\alpha, s) =0$, note that $\tilde{L}(\alpha, s) = p^\ast \cdot s-\tilde{H}(\alpha, p^\ast)$ for $p^\ast =0$. Therefore, $s=\partial_p\tilde{H}(\alpha, 0)$.
\end{proof}
\begin{lem}
Let $t>0$ and $\alpha \in [a, b]$. Then there exists a unique path $\eta:[0,t]\to [a,b]$ such that $I(\eta(t);\alpha,t)=0$.
\end{lem}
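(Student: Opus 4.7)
The plan is to reduce the claim to existence and uniqueness of solutions of a one-dimensional reflected ODE. By Lemma \ref{lem:lzero}, $\tilde{L}(\alpha,s)\geq 0$ with equality iff $s=b(\alpha):=\tilde{\Phi}^+(\alpha)-\tilde{\Phi}^-(\alpha)$. Since $\tilde{L}\geq 0$, the infimum in \eqref{eqn:defIratef} can equal zero only when some triple $(\eta,v,l)\in \mathrm{SP}(\alpha)$ satisfies $\tilde{L}(\eta(s),v(s))=0$ for a.e.\ $s\in[0,t]$, which by Lemma \ref{lem:lzero} forces $v(s)=b(\eta(s))$ a.e. Substituting into \eqref{eqn:1dpath}, this is equivalent to $(\eta,l)$ solving the reflected ODE
\begin{equation*}
\dot\eta(s)=b(\eta(s))-l(s)\,n(\eta(s)),\qquad \eta(0)=\alpha,\qquad \eta(s)\in[a,b],
\end{equation*}
with $l(s)\geq 0$ and $l(s)=0$ whenever $\eta(s)\in(a,b)$.

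For existence I would apply the classical one-dimensional Skorokhod reflection mapping to the free flow of $b$ on $[a,b]$ (see, e.g., \cite{lions1985neumann, DUPUIS19901123}). By the LMA \eqref{lma}, $\tilde{\Phi}^{\pm}$ are polynomials in $\alpha$, hence smooth and in particular Lipschitz on the compact interval $[a,b]$. Standard results then produce a unique absolutely continuous $\eta:[0,t]\to[a,b]$ together with a nonnegative $l\in L^{1}_{\mathrm{loc}}([0,t])$ solving the reflected ODE. Setting $v(s):=b(\eta(s))$ yields a triple $(\eta,v,l)\in \mathrm{SP}(\alpha)$ whose running cost vanishes, so $I(\eta(t);\alpha,t)=0$.

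Uniqueness follows because any other path $\tilde{\eta}$ with $I(\tilde{\eta}(t);\alpha,t)=0$ must, by the first paragraph, also solve the same reflected ODE with initial datum $\alpha$. Lipschitz regularity of $b$ together with the standard uniqueness theory for reflected ODEs then forces $\tilde{\eta}\equiv\eta$ on $[0,t]$.

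The main obstacle I anticipate is the boundary behavior of $l$: the reflection term is only defined implicitly and need not be continuous in time, so the uniqueness step requires either a Gronwall-type estimate that absorbs the signed reflection, or an explicit one-dimensional argument using that at $\eta(s)=a$ the nonnegative reflection must equal $(-b(a))_+$ (and symmetrically at $b$). Both approaches are classical in the reflected-ODE literature and close the proof, since the drift is smooth and the reflecting domain is a bounded interval.
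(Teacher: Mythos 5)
Your proof is correct, but it takes a genuinely different route from the paper's. You observe that zero running cost forces $v(s)=\tilde{\Phi}^+(\eta(s))-\tilde{\Phi}^-(\eta(s))$ a.e.\ (via Lemma \ref{lem:lzero}) and then recognize the resulting system as the classical one-dimensional Skorokhod problem with Lipschitz drift on $[a,b]$, for which existence and uniqueness are standard (convexity of the interval makes the reflection term monotone, so a Gronwall estimate closes uniqueness exactly as you anticipate). The paper instead argues by hand: it splits into the cases $\alpha\in(a,b)$ versus $\alpha\in\{a,b\}$, tracks the first hitting time of the boundary, shows the path solves the unreflected ODE \eqref{eqn:meanode} in the interior, and rules out the path re-entering the interior after touching the boundary by a contradiction with ODE uniqueness; the reflection density $l$ is then recovered explicitly from $\dot{\eta}=0=v-l$. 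Your approach is more unified and avoids the case analysis at the cost of invoking the reflected-ODE well-posedness machinery; it also has the minor advantage of explicitly constructing the zero-cost path (the paper's proof assumes one exists and only proves uniqueness). One small point worth making explicit in your write-up: $I(\eta(t);\alpha,t)=0$ is a statement about the infimum over all admissible paths ending at $\eta(t)$, so to conclude that $\eta$ itself has vanishing running cost you should either note that the infimum is attained (e.g.\ by \cite[Theorem 7.2]{Ishii2011}) or phrase the lemma as uniqueness of the zero-cost path, which is the interpretation the paper also adopts.
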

\begin{proof}
Let $(\eta, v, l)$ be a solution to ${\rm SP}(\alpha)$ such that $I(\eta(t);\alpha,t)=0$. Note in \eqref{eqn:1dpath}, we have $l(s)=0$ and $\dot{\eta}(s)=v(s)$ whenever $\eta(s)\in (a, b)$.
\begin{itemize}
    \item[(a)] Suppose $\alpha \in (a, b)$. Define $t_0$ as the first time $\eta$ reaches the boundary of the domain, that is,
\[
t_0 :=\inf \left\{s \in [0, \infty):\eta(s)\in \{a, b\}\right\}.
\]

If $t_0 \geq t$, then $\eta(s) \in (a, b)$ for all $s \in [0, t]$. Since $\tilde{L}\geq 0$ and $I(\eta(t);\alpha,t)=0$, it follows that $\tilde{L}(\eta(s), \dot{\eta}(s)) = 0$ on $[0,t]$. By Lemma \ref{lem:lzero}, we know
\begin{equation}\label{eqn:meanode}
\dot{\eta}(s) = \partial_p\tilde{H}(\eta(s), 0) = \tilde{\Phi}^+(\eta(s))-\tilde{\Phi}^-(\eta(s))
\end{equation}
for $s \in [0, t]$. Since $\tilde{\Phi}^+$ and $\tilde{\Phi}^-$ are locally Lipschitz, and $\eta(0)=\alpha$, hence the solution $\eta$ to \eqref{eqn:meanode} is unique in $AC([0,t], \mathbb{R})$.

If $t_0 < t$, then $\eta (s) \in (a , b)$ for $s \in [0, t_0)$ and $\eta$ again solves \eqref{eqn:meanode}, hence is unique up to time $t_0$, where it reaches a boundary point, say $\eta(t_0) = b$. We claim that $\eta(s) \equiv \eta(t_0)$ for $s\in (t_0, t]$. Suppose on the contrary that for some $t^\ast \in (t_0, t]$, $\eta(t^\ast) \in (a, b)$. Define the time $\eta$ leaves the endpoint $b$ by
\[
t_1 := \sup \left\{s \in [t_0, t^\ast] : \eta(s) \in \{a, b\} \right\}.
\]
Then $\eta(s) \in (a, b)$ for $s \in (t_1, t^\ast)$, with $\eta(t_1) = b$. By Lemma \ref{lem:lzero}, $\eta$ solves \eqref{eqn:meanode} on $(t_1, t^\ast)$ with initial condition $\eta(t_1) = b$. This contradicts the uniqueness of solutions to \eqref{eqn:meanode} near the endpoint $b$, as $\eta$ solves \eqref{eqn:meanode} on $(0, t_0)$ with $\eta(t_0)=b$. Therefore, it must be that $\eta(s) \equiv \eta(t_0)$ for $s \in (t_0, t]$ and thus $\dot{\eta}(s)=0$ for $s \in (t_0, t] $.

Furthermore, since $I(\eta(t);\alpha,t)=0$, Lemma \ref{lem:lzero} implies that $v$ satisfies
\[
v(s)= \tilde{\Phi}^+(\eta(t_0))-\tilde{\Phi}^-(\eta(t_0))
\]
for almost everwhere $s \in (t_0, t] $, and thus $l$ is uniquely defined via $\dot{\eta}(s)=0=v(s)-l(s)$.  Thus, in this case as well, the solution $\eta$ is unique in $AC([0,t], \mathbb{R})$.

\item[(b)] Suppose $\alpha = a$ or $\alpha = b$. Without loss of generality, assume $\alpha = b$.
If $\tilde{\Phi}^+(b) - \tilde{\Phi}^-(b) \geq 0$, then to ensure $I(\eta(t); \alpha, t) = 0$, the path must remain constant: $\eta(s) \equiv b$ for all $s \in [0, t]$. In this case, we have $v(s) = l(s) = \tilde{\Phi}^+(b) - \tilde{\Phi}^-(b)$, so that $\dot{\eta}(s) = v(s) - l(s) = 0$, as required.

On the other hand, if $\tilde{\Phi}^+(b) - \tilde{\Phi}^-(b) < 0$, then the dynamics push $\eta$ away from the boundary into the interior $(a, b)$. While $\eta(s) \in (a, b)$, the path solves
\[
\dot{\eta}(s) = \partial_p \tilde{H}(\eta(s), 0) = \tilde{\Phi}^+(\eta(s)) - \tilde{\Phi}^-(\eta(s)).
\]
If $\eta$ reaches the opposite endpoint $a$ at some time $t_0 < t$, the situation can be analyzed similarly to case (a) above.
\end{itemize}
\end{proof}

Finally, we state the law of large numbers below as a consequence of the large deviation principle.
\begin{cor}
Let $\vx_0 \in \overline{\Omega}$, and for each small $h>0$, define $\vxzh = \vx_0+(\rh+\alzh)\vec{\nu}
\in \overline{\Omega}$ where $\rh \in \mathbb{R}$ satisfies $|\rh| < h$, $\alzh = \kzh h$ for some $\kzh \in \mathbb{Z}$, and
\[
\lim_{h \to 0^+}\rh=0, \quad \lim_{h\to 0^+}\alzh = 0.
    \] Let $\eta^\ast \in AC([0,t];\mathbb{R})$ be the unique path such that $I(\eta^\ast(t);0,t)=0$. Then $\vx^\ast(t):=\vx_0+\eta^\ast(t)\vec{\nu}$ is the mean-field limit path in the sense that the weak law of large numbers for process $\cv(t)$ holds at any time $t$. Moreover, for any $t>0$ and $\varepsilon>0$, there exists $h_0>0$ such that if $h<h_0$, then
    \[
    \mathbb{P}\left\{\left|\cv(t)-\vx^\ast(t)\right|\geq\varepsilon\right\} \leq e^{-\frac{\beta(\varepsilon, t)}{2h}},
    \]
    where $\beta(\varepsilon, t): = \inf_{\left|\vy-\vx^\ast(t)\right|\geq \varepsilon, \, \vy \in S_0} I\left(\frac{(\vec{y}-\vx_0)\cdot \vec{\nu}}{|\vec{\nu}|^2};0,t\right) >0$ with $I$ denoting the rate function defined in \eqref{eqn:defIratef} and $S_0 =  \left\{\vec{x}_0 + \alpha \vec{\nu}: a\leq \alpha \leq b\right\}$ is the maximal closed line segment in the direction $\vec{\nu}$ that contains $\vx_0$.
\end{cor}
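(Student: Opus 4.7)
The plan is to deduce both claims directly from the large deviation principle established in Theorem \ref{thm:1dLDP}, together with the uniqueness of the zero of the rate function proved in the preceding lemma. First I would observe that because $\vxzh \in \overline{\Omega}$ with $\vxzh = \vx_0 + (\rh + \alzh)\vec{\nu}$, the state space of $\cv(t)$ is contained in $S_0$, so the sample $\cv(t)$ always lies on the line segment $S_0$; in particular, $\vx^\ast(t) = \vx_0 + \eta^\ast(t)\vec{\nu} \in S_0$ since $\eta^\ast(t) \in [a,b]$ by the Skorokhod problem \eqref{eqn:1dpath}.

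Next I would argue that $\beta(\varepsilon,t) > 0$. Since $\tilde{I}(\cdot\,; \vx_0,t) = I\!\left(\tfrac{(\cdot - \vx_0)\cdot \vec{\nu}}{|\vec{\nu}|^2};0,t\right)$ is a good rate function on the compact set
\[
K_\varepsilon := \{\vy \in S_0 : |\vy - \vx^\ast(t)| \geq \varepsilon\},
\]
lower semicontinuity together with the compactness of $K_\varepsilon$ ensures the infimum defining $\beta(\varepsilon,t)$ is attained at some $\vy_\varepsilon \in K_\varepsilon$. The preceding lemma shows that $\eta \mapsto I(\eta(t);0,t)$ vanishes only along the unique path $\eta^\ast$, so $I(\eta;0,t) = 0$ forces $\eta = \eta^\ast(t)$, and hence $\vy = \vx^\ast(t) \notin K_\varepsilon$. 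Therefore $\beta(\varepsilon,t) = \tilde{I}(\vy_\varepsilon;\vx_0,t) > 0$.

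Then I would apply the large deviation upper bound from Theorem \ref{thm:1dLDP}. For the closed set $K_\varepsilon \subset S_0$,
\[
\limsup_{h \to 0^+} h \log \mathbb{P}\!\left\{ |\cv(t) - \vx^\ast(t)| \geq \varepsilon \right\} \leq -\inf_{\vy \in K_\varepsilon} \tilde{I}(\vy; \vx_0, t) = -\beta(\varepsilon,t).
\]
Hence for all sufficiently small $h$ (say $h < h_0$ for some $h_0 = h_0(\varepsilon,t) > 0$),
\[
h \log \mathbb{P}\!\left\{ |\cv(t) - \vx^\ast(t)| \geq \varepsilon \right\} \leq -\tfrac{\beta(\varepsilon,t)}{2},
\]
which gives the stated exponential concentration bound. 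The weak law of large numbers follows by letting $h \to 0^+$, since the exponential bound forces $\mathbb{P}\{|\cv(t) - \vx^\ast(t)| \geq \varepsilon\} \to 0$ for every $\varepsilon > 0$.

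The main obstacle I anticipate is the verification that the zero of the rate function is exactly $\vx^\ast(t)$ and is isolated from $K_\varepsilon$ in the relevant sense; however, this is already supplied by the uniqueness lemma above, so the proof reduces to a careful bookkeeping argument that matches the two characterizations of zero-cost paths on the line segment $S_0$.
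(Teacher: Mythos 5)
Your argument is correct and follows exactly the route the paper intends: the corollary is stated there without proof as a direct consequence of Theorem \ref{thm:1dLDP} and the preceding uniqueness lemma, and your derivation (LDP upper bound on the closed set $K_\varepsilon \subset S_0$, plus positivity of the good rate function on $K_\varepsilon$ via lower semicontinuity, compactness, and the fact that its unique zero $\vx^\ast(t)$ lies outside $K_\varepsilon$) is the standard way to fill in that gap. The only step worth making explicit is that concluding $\vy = \vx^\ast(t)$ from $I\bigl((\vy-\vx_0)\cdot\vec{\nu}/|\vec{\nu}|^2;0,t\bigr)=0$ uses attainment of the infimum defining $I$ by an optimal Skorokhod triple, which the paper already supplies via the cited result of Ishii.
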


\section{Convergence results on the whole domain}\label{sec:ubetaconvergence}

Previously, we established that for sufficiently small $h>0$, if the initial position
\[
\vxzh \in \Omegah = \left\{ \vxi = \vec{i} h; \,\,  \vec{i} \in \mathbb{N}^N,\, \vxi \in \overline{\Omega}\right\}
\]
of the rescaled stochastic process $\cv(t)$ lies on $S_0$—the maximal closed line segment in the direction $\vec{\nu}$ that contains $\vx_0$—then the desired large deviation principle holds. We now turn to the more general case where the starting point $\vxzh$ of $\cv(t)$ lies in $\overline{\Omega}$, but not necessarily on $S_0$, and satisfies $\lim_{h \to 0^+} \vxzh = \vx_0$. Throughout this discussion, we continue to assume $N = 2$ and $M = 1$. In general, the same results also hold for $N \geq 2$ and $M=1$. Additionally, we require that $\overline{\Omega}$ be convex; in general, the convergence result may fail to hold in nonconvex domains. (See Remark \ref{rem:convex})

Let $\vx_0 \in \overline{\Omega}$ and $\beta \in \mathbb{R}$. Define $\vec{\nu}_\perp := (\nu_{11}^+, -\nu_{12}^-)$, and note that $\vec{\nu} \cdot \vec{\nu}_{\perp} = 0$. Consider the line passing through $\vx_0 + \beta \vec{\nu}_{\perp}$ in the direction of $\vec{\nu}$, and denote the intersection of the line and $\overline{\Omega}$ as
\[
S_{\sbe}:=\{\vx_0+\beta\vec{\nu}_{\perp}+\alpha \vec{\nu}:\alpha\in \mathbb{R}\} \cap \overline{\Omega}=\left\{\vec{x}_0 +\beta\vec{\nu}_{\perp}+ \alpha \vec{\nu}: a_{\sbe}\leq \alpha \leq b_{\sbe}\right\},
\]
for some $a_{\sbe}, b_{\sbe} \in \mathbb{R}$. Note that it could be the case that $\vec{x}_0 +\beta\vec{\nu}_{\perp} \notin \overline{\Omega}$.

For a fixed $\beta$, we can deduce the same result as in the previous section. More specifically, assume $\cv(0)=\vxzh=\vx_0+\beta \vec{\nu}_\perp + (r+kh)\vec{\nu} \in \overline{\Omega}$ for some $r, \beta \in \mathbb{R}$ small, and $k \in \mathbb{Z}$. Then the process evolves within the discrete set
\begin{equation}
\begin{aligned}
 \hobrh:&=\left\{\vec{x}_0 +\beta \vec{\nu}_\perp+(r+\alpha) \vec{\nu} \in \overline{\Omega}: \alpha =kh, k  \in \mathbb{Z}\right\} \\ &=\left\{\vec{x}_0 +\beta \vec{\nu}_\perp+(r+\alpha) \vec{\nu}: \alpha =kh, k\in \mathbb{Z}, k_{\sa_{\sbe}, \sr, \V}\leq k\leq k_{\scb_{\sbe}, \sr, \V} \right\}
\end{aligned}
\end{equation}
for some integers $k_{\sa_{\sbe}, \sr, \V} \leq k_{\scb_{\sbe}, \sr, \V}$ that depend on $a_{\sbe}, b_{\sbe}, \beta, r$, $\vec{x}_0$ and $h$. For fixed $\beta, r$, we have $\lim_{h\to 0^+} k_{\sa_{\sbe}, \sr, \V}h=a_{\sbe}$ and $ \lim_{h\to 0^+} k_{\scb_{\sbe},\sr, \V}h=b_{\sbe}$. Denote
\[
\tobrh :=\{r+\aldh: \aldh=kh, k\in \mathbb{Z},  k_{\sa_{\sbe}, \sr, \V} \leq k \leq k_{\scb_{\sbe}, \sr, \V}\},
\]
and define $\wrhb:\tobrh \times [0,\infty) \to \mathbb{R}$ by
\[\wrhb \left(r+\aldh, t\right): = \uh(\vec{x}_0+\beta \vec{\nu}_\perp +(r+\aldh) \vec{\nu} , t).\]

Also define $\tilde{\Phi}^+_{\sbe}(r+\aldh):= \Phi^+(\vec{x}_0+\beta \vec{\nu}_\perp+(r+\aldh) \vec{\nu})$, $\tilde{\Phi}^-_{\sbe}(r+\aldh):= \Phi^-(\vec{x}_0+\beta \vec{\nu}_\perp+(r+\aldh) \vec{\nu})$, $w_{\zero}^{\sbe}(r+\aldh):=u_0\left(\vec{x}_0 +\beta \vec{\nu}_\perp+(r+\aldh) \vec{\nu}\right)$ for any $r+\aldh \in \tobrh$.

Then $\wrhb$ solves
\begin{equation}\label{eqn:1ddistbeta}
    \left\{\begin{aligned}
\pt_t \wrhb(r+\aldh,t) = & \mathbbm{1}_{\{\aldh
\leq (k_{\scb_{\sbe}, \sr, \V}-1)h\}}  \tilde{\Phi}^+_{\sbe}
(r+\aldh) \left( e^{\frac{\wrhb(r+\aldh+ h, t)-\wrhb(r+\aldh, t)}{h}} - 1\right)\\ &+ \mathbbm{1}_{\{\aldh
\geq (k_{\sa_{\sbe}, \sr, \V}+1)h\}}\tilde{\Phi}^-_{\sbe}(r+\aldh)\left( e^{  \frac{\wrhb(r+\aldh- h, t)-\wrhb(r+\aldh, t)}{h} } - 1\right), \\  &\qquad\qquad\qquad\qquad\qquad\qquad\qquad\text{ for }  (r+\aldh, t) \in \tobrh \times (0, \infty),\\
\wrhb(r+\aldh, 0)=&w_{\zero}^{\sbe}(r+\aldh),
    \end{aligned}
    \right.
\end{equation}
with the discrete Hamiltonian
\begin{equation}
\begin{aligned}
\tHrhb: \ell^{\8}(\tobrh) \to \ell^{\8}(\tobrh); \quad  ( w(r+\aldh) )_{r+\aldh \in\tobrh} \mapsto ( \tHrhb(r+\aldh, w(r+\aldh), w(\cdot)) )_{r+\aldh \in\tobrh}
\end{aligned}
\end{equation}
defined by
\begin{equation}\label{eqn:defHtildehbeta}
\begin{aligned}
\tHrhb\left(r+\aldh , w(r+\aldh),w\right) :=&\mathbbm{1}_{\{\aldh \leq (k_{\scb_{\sbe},\sr, \V}-1)h\}}  \tilde{\Phi}_{\sbe}^+(r+\aldh) \left( e^{\frac{w(r+\aldh+ h)-w(r+\aldh)}{h}} - 1\right)\\ &+ \mathbbm{1}_{\{\aldh
\geq (k_{\sa_{\sbe}, \sr, \V}+1)h\}}\tilde{\Phi}_{\sbe}^-(r+\aldh)\left( e^{  \frac{w(r+\aldh- h)-w(r+\aldh)}{h} } - 1\right).
\end{aligned}
\end{equation}

Define the upper semicontinuous envelope of $\wrhb$ by
\[
\overline{w}^{\sbe}(\alpha, t): = \limsup_{h \to 0^+, \, r \to 0, \, \alhi \to \alpha, \, r+\alhi \in \tobrh , \,\ti\to t} \wrhb(r+\alhi, \ti)
\]
for $(\alpha, t) \in [a_{\sbe}, b_{\sbe}] \times [0, \infty)$, and the lower semicontinuous envelope of $\wrhb$ by

\[
\underline{w}^{\sbe}(\alpha, t): = \liminf_{h \to 0^+, \, r \to 0, \, \alhi \to \alpha, \, r+\alhi \in \tobrh , \,\ti\to t} \wrhb(r+\alhi, \ti)
\]
for $(\alpha, t) \in [a_{\sbe}, b_{\sbe}] \times [0, \infty)$.


Analogous to Theorem \ref{thm:wbarsubsln} and Corollary \ref{cor:1dconverge}, $\wrhb$ converges to $w^{\sbe}$ as $r,h \to 0^+$, where $w^{\sbe}$ is the solution of a Cauchy problem with a Neumann boundary condition. This is summarized in the following corollary.

\begin{cor}
For fixed $\beta \in \mathbb{R}$,  $\overline{w}^{\sbe} = \underline{w}^{\sbe} =w ^{\sbe}$ where $w^{\sbe} : \left[a_{\sbe}, b_{\sbe}\right] \times [0, \infty) \to \mathbb{R}$ solves the following Cauchy problem with the Neumann boundary condition:
\begin{equation} \label{eqn:limitbeta}
    \left\{\begin{aligned}
     \pt_t w^{\sbe}(\alpha, t) =&  \tilde{\Phi}^+_{\sbe}(\alpha) \left( e^{\partial_\alpha w^{\sbe}(\alpha,t)} - 1\right)+ \tilde{\Phi}^-_{\sbe}(\alpha)\left( e^{-\partial_\alpha w^{\sbe}(\alpha,t)} - 1\right) \quad \text{ in } \left(a_{\sbe} ,b_{\sbe} \right) \times (0, \infty),\\ -\partial_\alpha w^{\sbe}(a, t) =& \partial_\alpha w^{\sbe}(b, t)=0 \qquad \qquad \qquad\qquad\qquad \qquad \qquad \qquad \,\text{ for } t \in (0, \infty),\\w^{\sbe}(\alpha, 0)=&w_{\zero}^{\sbe} (\alpha) \qquad \qquad \qquad\qquad\qquad \qquad \qquad \qquad \qquad \quad \,\text{ on } \left[a_{\sbe}, b_{\sbe}\right],
    \end{aligned}
    \right.
\end{equation}
where $\tilde{\Phi}^+_{\sbe}(\alpha)= \Phi^+(\vec{x}_0+ \beta \vec{\nu}_\perp+\alpha \vec{\nu})$, $\tilde{\Phi}^-_{\sbe}(\alpha)= \Phi^-(\vec{x}_0+ \beta \vec{\nu}_\perp+\alpha\vec{\nu})$, $w_{\zero}^{\sbe}(\alpha)=u_0 \left(\vec{x}_0+\beta\vec{\nu}_\perp +\alpha \vec{\nu}\right)$ for $\alpha \in \left[a_{\sbe}, b_{\sbe}\right]$.
\end{cor}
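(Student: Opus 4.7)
My plan is to mirror the proof of Theorem \ref{thm:wbarsubsln} and Corollary \ref{cor:1dconverge} essentially verbatim, because for fixed $\beta$ the translated segment $S_\sbe = \vec{x}_0 + \beta \vec{\nu}_\perp + [a_\sbe, b_\sbe]\vec{\nu}$ plays exactly the role that $S_0$ played there. The discrete equation \eqref{eqn:1ddistbeta} has the same structure as \eqref{eqn:1dv_h} with intensities $\tilde{\Phi}^\pm_\sbe$, endpoints $a_\sbe, b_\sbe$, and lattice thresholds $k_{\sa_\sbe, \sr, \V}, k_{\scb_\sbe, \sr, \V}$ in place of the unsubscripted quantities, and these thresholds satisfy $k_{\sa_\sbe, \sr, \V} h \to a_\sbe$ and $k_{\scb_\sbe, \sr, \V} h \to b_\sbe$ as $h \to 0^+$ with $r, \beta$ fixed. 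Convexity of $\overline{\Omega}$ is used here only to guarantee that $S_\sbe$ is a single closed segment of a line, so that the one-dimensional framework of the previous section applies without modification.

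First, I would verify that $\overline{w}^\sbe(\alpha, 0) = \underline{w}^\sbe(\alpha, 0) = w_0^\sbe(\alpha)$ for every $\alpha \in [a_\sbe, b_\sbe]$, repeating Step 1 of Theorem \ref{thm:wbarsubsln}. Under the temporary assumption $u_0 \in W^{1,\infty}(\Omega) \cap C(\overline{\Omega})$, Proposition \ref{prop:unibduhpt} supplies a uniform bound $\|\partial_t \wrhb\|_\infty \leq C_0$ independent of $h$; combined with the Lipschitz continuity of $w_0^\sbe$ along $S_\sbe$, this yields an estimate of the form $|\wrhb(r + \alhi, \ti) - w_0^\sbe(\alpha)| \leq C_0 |\ti| + L |r + \alhi - \alpha|$. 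The general continuous $u_0$ then follows by sup-norm approximation by Lipschitz initial data and the discrete contraction from Proposition \ref{prop:propdis}(i).

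Second, I would perform the viscosity verification for $t > 0$. Given $\varphi \in C^1([a_\sbe, b_\sbe] \times [0, \infty))$ such that $\overline{w}^\sbe - \varphi$ attains a strict local max at $(\alpha_0, t_0) \in [a_\sbe, b_\sbe] \times (0, \infty)$, the same perturbation argument used in the proof of Theorem \ref{thm:wbarsubsln} produces a diagonal sequence $(r_n + \alpha_n, t_n)$ at which $\wrhb - \varphi$ attains a discrete local max with $h_n, r_n \to 0$, $\alpha_n \to \alpha_0$, and $t_n \to t_0$. Substitution into \eqref{eqn:1ddistbeta} gives $\partial_t \varphi - \tHrhb(\cdot, \varphi, \varphi) \leq 0$ at $(r_n + \alpha_n, t_n)$. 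For $\alpha_0 \in (a_\sbe, b_\sbe)$, both indicators in \eqref{eqn:defHtildehbeta} are eventually equal to $1$ and the limit yields the PDE inequality. For $\alpha_0 \in \{a_\sbe, b_\sbe\}$, the two subcases treated in Case (2) of Theorem \ref{thm:wbarsubsln} recur unchanged: the grid-interior subcase yields the PDE inequality directly, while the grid-boundary subcase retains only one exponential term, and if the full PDE inequality fails then the missing term forces the Neumann inequality $-\partial_\alpha \varphi(a_\sbe, t_0) \leq 0$ (respectively $\partial_\alpha \varphi(b_\sbe, t_0) \leq 0$). The symmetric argument shows that $\underline{w}^\sbe$ is a supersolution of \eqref{eqn:limitbeta}.

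The comparison principle (Proposition \ref{prop:cpn}) applied on $[a_\sbe, b_\sbe]$ to the subsolution $\overline{w}^\sbe$ and the supersolution $\underline{w}^\sbe$, which share initial data $w_0^\sbe$, then gives $\overline{w}^\sbe \leq \underline{w}^\sbe$, and the trivial reverse inequality from the definitions yields equality; this common function is the unique viscosity solution $w^\sbe$ of \eqref{eqn:limitbeta} provided by Proposition \ref{thm:rf}. No substantively new difficulty appears beyond the $\beta = 0$ case already handled in Section \ref{sec:restrict1d}; the only point needing care is the bookkeeping at the shifted lattice thresholds $k_{\sa_\sbe, \sr, \V}$ and $k_{\scb_\sbe, \sr, \V}$, which dictates exactly when the indicators in \eqref{eqn:defHtildehbeta} switch off and hence which one-sided inequality the boundary analysis extracts.
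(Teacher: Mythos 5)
Your proposal is correct and follows exactly the route the paper intends: the paper gives no separate proof of this corollary, stating only that it is "analogous to Theorem \ref{thm:wbarsubsln} and Corollary \ref{cor:1dconverge}," and your write-up is precisely that analogy carried out (initial-layer estimate via Proposition \ref{prop:unibduhpt} and approximation, viscosity sub/supersolution verification with the boundary-indicator case analysis, then the comparison principle and the representation formula on $[a_{\sbe}, b_{\sbe}]$). No discrepancies with the paper's argument.
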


Note that the Hamiltonian
\begin{equation} \label{eqn:Hbeta}
\tilde{H}_{\sbe}(\alpha, p):=\tilde{\Phi}^+_{\sbe} (\alpha) \left( e^p - 1\right)+ \tilde{\Phi}^-_{\sbe} (\alpha)\left( e^{-p} - 1\right)
\end{equation}
is strictly convex in $p$ and is coercive, i.e., $\lim_{|p|\to +\infty} \inf_{\alpha \in \left[a_{\sbe}, b_{\sbe}\right]}\tilde{H}_{\sbe} (\alpha, p) =\infty$.

From \eqref{eqn:1dvf}, \eqref{eqn:1dpath},and Proposition \ref{thm:rf}, the viscosity solution $w^{\sbe}:\left[a_{\sbe},b_{\sbe}\right] \times [0, \infty) \to \mathbb{R}$ to \eqref{eqn:limitbeta} can be represented by
\begin{equation}\label{eqn:wbetaocf}
w^{\sbe}(\alpha, t) = \sup_{y\in[a_{\sbe}, b_{\sbe}]}\left\{w_{\zero}^{\sbe}(y)-I_{\sbe}\left(y;\alpha,t\right) \right\},
\end{equation}
where
\begin{equation}
    \quad I_{\sbe} \left(y;\alpha,t\right):=\inf \left\{\left.\int_0^t  \tilde{L}_{\sbe} \left(\eta(s), v(s) \right)ds\right| \eta(t)=y, (\eta, v, l) \in {\rm SP}_{\sbe}(\alpha) \right\},
\end{equation}
$\tilde{L}_{\sbe}$ is the Legendre transform of $\tilde{H}_{\sbe}$ defined in \eqref{eqn:Hbeta}, and ${\rm SP}_{\sbe}(\alpha)$ denotes the family of solutions to the Skorokhod problem: for given $\alpha \in \left[a_{\sbe}, b_{\sbe}\right]$ and $v\in L^1_{\rm loc} \left([0, \infty), \mathbb{R}\right)$, find a pair $(\eta, l)\in {\rm AC}_{\rm loc} \left([0, \infty), \mathbb{R}\right) \times L^1_{\rm loc} \left([0, \infty), \mathbb{R}\right)$ such that
\begin{equation}
    \left\{
    \begin{aligned}
     \eta(0) = &\alpha,\\
     \eta(s) \in & \left[a_{\sbe}, b_{\sbe}\right] \qquad\qquad\qquad \text{ for all } s\in [0, \infty),\\
     \dot{\eta}(s)+l(s) n_{\sbe}(\eta(s))=&v(s) \qquad \qquad\qquad\quad\,\,\, \text{ for a.e. } s \in (0, \infty),\\
     l(s) \geq& 0 \qquad \qquad \qquad \qquad\,\,\,\, \text{ for a.e. } s \in (0, \infty),\\
     l(s)= &0 \text{ if } \eta(s) \in (a_{\sbe}, b_{\sbe}) \quad \,\, \text{ for a.e. } s \in (0, \infty),
    \end{aligned}
    \right.
\end{equation}
where $n_{\sbe}:\{a_{\sbe}, b_{\sbe}\} \to \{-1, 1\}$ is defined by
$n(a_{\sbe})=-1$ and $n(b_{\sbe})=1$.

The solution $w^{\sbe}$ to the limiting equation \eqref{eqn:limitbeta} is actually close to the solution $w$ to the limiting equation \eqref{HJE_NM} pointwise in the interior of $[a,  b]$ when $\beta$ is small, as summarized in the following proposition.


\begin{prop}
   Assume that $\Omega$ is convex, and let $u_0 \in C(\overline{\Omega})$. For $\alpha \in [a, b]$, define
   \[
   w_0(\alpha):=u_0\left(\vx_0+\alpha\vec{\nu}\right).
   \]
   Let $\beta>0$. For $\alpha \in [a_{\sbe}, b_{\sbe}]$, define
   \[
   w_{\zero}^{\sbe}(\alpha):=u_0\left(\vx_0+\alpha \vec{\nu}+\beta \vec{\nu}_\perp\right).
   \]
   Let $w$ be the solution to \eqref{HJE_NM} with the initial condition $w_0$, and let $w^{\sbe}$ be the solution to \eqref{eqn:limitbeta} with the initial condition $w_{\zero}^{\sbe}$. Then, for any $\alpha \in (a, b)$ and $t \in [0, \infty)$,
   \[
   \lim_{\beta \to 0} w^{\sbe}(\alpha,t)=w(\alpha,t).
   \]
\end{prop}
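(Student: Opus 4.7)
The plan is to leverage the variational representations, namely \eqref{eqn:wbetaocf} for $w^{\sbe}$ and its analogue for $w$, together with stability of the data as $\beta\to 0$. I first record the convergences of the data, which follow from elementary arguments: continuity of $\Phi^\pm$ on $\overline{\Omega}$ yields $\tilde\Phi^\pm_{\sbe}\to\tilde\Phi^\pm$, hence $\tilde H_{\sbe}\to\tilde H$ and their convex duals $\tilde L_{\sbe}\to\tilde L$ locally uniformly; uniform continuity of $u_0$ on $\overline{\Omega}$ implies $w_{\zero}^{\sbe}\to w_0$ uniformly on compact sets; convexity of $\Omega$ together with continuity of $\partial\Omega$ yields $a_{\sbe}\to a$ and $b_{\sbe}\to b$; and the positive lower bound $\Phi^\pm\ge c_0>0$ from \eqref{eq:lower} provides uniform superlinearity of $\tilde L_{\sbe}$ in its velocity argument, uniform in $\beta$ near $0$.

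For the upper bound $\limsup_{\beta\to 0}w^{\sbe}(\alpha,t)\le w(\alpha,t)$, I would take $\beta_n\to 0^+$ and, using \eqref{eqn:wbetaocf}, extract near-maximizing endpoints $y_n\in[a_{\sbe_n},b_{\sbe_n}]$ together with $1/n$-optimal Skorokhod triples $(\eta_n,v_n,l_n)\in\mathrm{SP}_{\sbe_n}(\alpha)$ with $\eta_n(t)=y_n$. Uniform boundedness of $w^{\sbe_n}$ (by contraction) bounds the running costs, and the uniform superlinearity of $\tilde L_{\sbe_n}$ yields uniform integrability of $\{v_n\}$ via de la Vall\'ee--Poussin. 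Along a subsequence, $v_n\rightharpoonup v^*$ in $L^1([0,t])$, $\eta_n\to\eta^*$ uniformly on $[0,t]$, and the reflection terms $l_n$ pass to a limit $l^*$, so that $(\eta^*,v^*,l^*)\in\mathrm{SP}(\alpha)$ with $\eta^*(t)=y^*\in[a,b]$. Lower semicontinuity of the running-cost functional (Ioffe's theorem) combined with the locally uniform convergence $\tilde L_{\sbe_n}\to\tilde L$ gives $\liminf_n I_{\sbe_n}(y_n;\alpha,t)\ge I(y^*;\alpha,t)$, and together with $w_{\zero}^{\sbe_n}(y_n)\to w_0(y^*)$ this yields $\limsup_n w^{\sbe_n}(\alpha,t)\le w_0(y^*)-I(y^*;\alpha,t)\le w(\alpha,t)$.

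For the lower bound $\liminf_{\beta\to 0}w^{\sbe}(\alpha,t)\ge w(\alpha,t)$, I fix $\eps>0$ and select an $\eps$-maximizer $y^*\in[a,b]$ together with an $\eps$-optimal triple $(\eta^*,v^*,l^*)\in\mathrm{SP}(\alpha)$ with $\eta^*(t)=y^*$. If $\eta^*$ stays strictly inside $(a,b)$, then for all sufficiently small $\beta>0$ the same triple is admissible in $\mathrm{SP}_{\sbe}(\alpha)$, and the locally uniform convergence $\tilde L_{\sbe}\to\tilde L$ combined with dominated convergence gives $\int_0^t\tilde L_{\sbe}(\eta^*,v^*)\,ds\to\int_0^t\tilde L(\eta^*,v^*)\,ds$ and $w_{\zero}^{\sbe}(y^*)\to w_0(y^*)$, yielding the desired inequality up to $\eps$. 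To reduce to this case when $\eta^*$ visits $\{a,b\}$, I would first approximate $(\eta^*,v^*,l^*)$ by a perturbed triple $(\eta^{\eps'},v^{\eps'},0)$ with no reflection, whose trajectory stays in $[a+\eps',b-\eps']$, with cost increase $O(\eps')$; this is done by shifting the boundary-pinned portions of $\eta^*$ inward by $\eps'$ and adjusting the velocity using the spatial Lipschitz continuity of $\tilde L$ on compact velocity sets. For $\beta$ small enough one has $[a+\eps',b-\eps']\subset(a_{\sbe},b_{\sbe})$, and the interior argument applies; sending $\eps',\eps\to 0$ concludes.

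The main obstacle I expect is the approximation step in the lower bound: replacing a reflected trajectory by an unreflected one that stays strictly inside a slightly smaller interval, with cost increase controlled by the inward offset. A careful quantitative estimate is required, combining the uniform superlinearity of $\tilde L$ (to bound the additional velocity needed for the inward displacement) with its uniform spatial continuity (to bound the pointwise change in the running cost). Convexity of $\Omega$ enters essentially here, since it ensures that the perturbed intervals $[a_{\sbe},b_{\sbe}]$ shrink in a controlled manner relative to $[a,b]$; in nonconvex domains the geometry of $S_{\sbe}$ may change discontinuously, which is consistent with the remark that the result can fail in general.
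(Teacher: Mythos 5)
Your overall strategy (variational representations plus stability of the data as $\beta\to 0$) is in the same spirit as the paper's, and your upper bound via weak $L^1$ compactness of near-optimal controls and lower semicontinuity of the action is a legitimate, if more technical, alternative to what the paper does. However, there is a genuine gap in your lower bound.

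The problematic step is the claim that a reflected $\eps$-optimal triple $(\eta^*,v^*,l^*)\in\mathrm{SP}(\alpha)$ can be replaced by an unreflected triple $(\eta^{\eps'},v^{\eps'},0)$ confined to $[a+\eps',b-\eps']$ with cost increase $O(\eps')$. The running cost in \eqref{eqn:1dvf} is $\tilde L(\eta(s),v(s))$, evaluated at the \emph{control} $v$, not at $\dot\eta=v-l\,n(\eta)$. Suppose the optimal trajectory is pinned at $\eta^*(s)=b$ on a time interval of length $\tau$ with $\tilde\Phi^+(b)-\tilde\Phi^-(b)>0$: then the choice $v^*(s)=l^*(s)=\tilde\Phi^+(b)-\tilde\Phi^-(b)>0$ keeps $\dot\eta^*=0$ at \emph{zero} running cost, since $\tilde L(b,\cdot)$ vanishes exactly at $\tilde\Phi^+(b)-\tilde\Phi^-(b)$ by Lemma \ref{lem:lzero}. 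Any unreflected trajectory that stays near $b-\eps'$ must instead take $v^{\eps'}=\dot\eta^{\eps'}\approx 0$, incurring cost rate $\tilde L(b-\eps',0)\to\tilde L(b,0)=-\inf_p\tilde H(b,p)>0$. The cost increase over that interval is therefore $\tau\,\tilde L(b,0)+o(1)$, an $O(1)$ quantity that does not vanish as $\eps'\to 0$. The reflection term is essential to achieving the value, and spatial Lipschitz continuity of $\tilde L$ on compact velocity sets cannot repair this, because the velocity itself must change from $v^*>0$ to $0$.

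The paper's proof circumvents exactly this issue: it keeps the \emph{same} control $v$ and re-solves the Skorokhod problem in the perturbed interval $[a_{\sbe},b_{\sbe}]$ (via \cite[Theorem 4.1]{Ishii2011}), obtaining a triple $(\eta_{\sbe},v,l_{\sbe})\in\mathrm{SP}_{\sbe}(\alpha)$, and then proves by a boundary case analysis that $|\eta(s)-\eta_{\sbe}(s)|\le 2\max\{|a_{\sbe}-a|,|b_{\sbe}-b|\}$ for all $s$. Since $v$ is unchanged and uniformly bounded (Lemma \ref{lem:velbd}, which requires first reducing to $u_0\in W^{1,\infty}(\Omega)\cap C(\overline{\Omega})$ and then passing to general $u_0\in C(\overline{\Omega})$ by the contraction in Corollary \ref{cor:1dcontcontract}), the cost comparison only involves the modulus of continuity of $\tilde L$ in its first argument on a bounded velocity set. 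If you replace your inward-shift approximation by this re-reflection argument (or otherwise quantify the cost of the reflection), the lower bound goes through; as written, it does not.
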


\begin{proof}
We prove the result in two steps.

Step 1: We first establish the result under the assumption that $u_0 \in W^{1, \infty}(\Omega) \cap C(\overline{\Omega})$. Later, we will only require $u_0 \in C(\overline{\Omega})$. When $u_0 \in W^{1, \infty}(\Omega) \cap C(\overline{\Omega})$, it follows that
    \begin{equation}\label{eqn:w0lip}
    w_0 \in \mathrm{Lip}([a, b]), \quad w_{\zero}^{\sbe} \in \mathrm{Lip}([a_{\sbe}, b_{\sbe}]),
    \end{equation}
    with the same Lipschitz constant $\left\| Du_0\right\|_{L^\infty(\Omega)}$.

    Since $\overline{\Omega}$ is convex, it follows that
    \begin{equation}\label{eqn:endpointsdif}
    |a_{\sbe}-a|\leq \omega_0(|\beta|), \qquad |b_{\sbe}-b|\leq \omega_0(|\beta|),
    \end{equation}
    for some modulus of continuity $\omega_0$. In particular, for $|\beta|$ sufficiently small, we have $\alpha \in (a_{\sbe}, b_{\sbe})$ since $\alpha \in (a, b)$. For any $\alpha\in [a, b]$ and $\alpha'\in [a_{\sbe}, b_{\sbe}]$, we estimate
    \[
    \left|\tilde{\Phi}^+ (\alpha)-\tilde{\Phi}^+_{\sbe}\left(\alpha'\right)\right| = \left|\Phi^+\left(\vec{x}_0+\alpha \vec{\nu}\right) -\Phi^+\left(\vec{x}_0+ \beta \vec{\nu}_\perp+\alpha' \vec{\nu}\right)\right| \leq \omega^+(|\beta|,  \left|\alpha-\alpha'\right|),
    \]
    where $\omega^+: [0, \infty)\to [0, \infty)$ is a modulus of continuity.
    Similarly,
    \[
    \left|\tilde{\Phi}^- (\alpha)-\tilde{\Phi}^-_{\sbe}\left(\alpha'\right)\right| = \left|\Phi^-\left(\vec{x}_0+\alpha \vec{\nu}\right) -\Phi^-\left(\vec{x}_0+ \beta \vec{\nu}_\perp+\alpha' \vec{\nu}\right)\right| \leq \omega^-(|\beta|,  \left|\alpha-\alpha'\right|),
    \]
    where $\omega^-: [0, \infty)\to [0, \infty)$ is a modulus of continuity. Hence, for any $\alpha\in [a, b]$, $\alpha'\in [a_{\sbe}, b_{\sbe}]$, and any $p \in \mathbb{R}$ with $|p| \leq C$ for some constant $C>0$, we have
    \begin{equation}
    \left|\tilde{H}(\alpha, p) - \tilde{H}_{\sbe}(\alpha', p)\right| \leq \omega_C(|\beta|,  \left|\alpha-\alpha'\right|),
    \end{equation}
    where $\omega_C:[0, \infty) \to [0, \infty)$ is some modulus of continuity. Furthermore, since $\tilde{H}$ and $\tilde{H}_{\sbe}$ are superlinear in $p$, for any $\alpha\in [a, b]$, $\alpha'\in [a_{\sbe}, b_{\sbe}]$, and any $v \in \mathbb{R}$ with $|v| \leq C_1$ for some constant $C_1>0$, we have
    \[
    \begin{aligned}
    \tilde{L}(\alpha, v)&=\sup_{p\in \mathbb{R}}p\cdot v-\tilde{H}(\alpha,p) = \sup_{|p|\leq \tilde{C}} p\cdot v-\tilde{H}(\alpha,p),\\
    \tilde{L}_{\sbe}(\alpha, v)&=\sup_{p\in \mathbb{R}}p\cdot v-\tilde{H}_{\sbe}(\alpha,p) = \sup_{|p|\leq \tilde{C}} p\cdot v-\tilde{H}_{\sbe} (\alpha,p),
    \end{aligned}
    \]
    where $\tilde{C}=\tilde{C}\left(\Phi^+, \Phi^-, C_1\right)>0$ is independent of $\beta$. Therefore, for any $\alpha\in [a, b]$, $\alpha'\in [a_{\sbe}, b_{\sbe}]$, and any $v \in \mathbb{R}$ with $|v| \leq C_1$,
    \begin{equation}\label{eqn:Ldif}
    \left|\tilde{L}(\alpha, v) - \tilde{L}_{\sbe}(\alpha', v)\right| \leq \omega_{\tilde{C}}(|\beta|,  \left|\alpha-\alpha'\right|).
    \end{equation}
    Moreover, for any $\alpha\in [a, b]$, $\alpha'\in [a_{\sbe}, b_{\sbe}]$,
    \begin{equation} \label{eqn:w0dif}
    \left|w_0(\alpha)-w_{\zero}^{\sbe}(\alpha')\right| =\left|u_0(\vec{x}_0 +\alpha \vec{\nu})-u_0 \left(\vec{x}_0+\beta\vec{\nu}_\perp +\alpha' \vec{\nu}\right)\right| \leq \omega \left(|\beta|, |\alpha-\alpha'|\right),
    \end{equation}
    where $\omega$ is some modulus of continuity.

    By \cite[Theorem 7.2]{Ishii2011}, for any $t \geq 0$, there exists a triple $(\eta, v, l) \in \text{SP}(0)$ such that
    \begin{equation}\label{eqn:wpath}
    w(\alpha, t)=w_0(\eta(t)) - \int_0^t  \tilde{L} \left(\eta(s), v(s) \right)ds.
    \end{equation}

    In particular, $(\eta, v, l) \in L^1_{\rm loc} \left([0, \infty), \mathbb{R}\right) \times {\rm AC}_{\rm loc} \left([0, \infty), \mathbb{R}\right) \times L^1_{\rm loc} \left([0, \infty), \mathbb{R}\right)$ solves
    \begin{equation}
    \left\{
    \begin{aligned}
     \eta(0) = &\alpha,\\
     \eta(s) \in & [a, b] \qquad\qquad\qquad \text{ for all } s\in [0, \infty),\\
     \dot{\eta}(s)+l(s) n(\eta(s))=&v(s) \qquad \qquad\qquad\, \text{ for a.e. } s \in (0, \infty),\\
     l(s) \geq& 0 \qquad \qquad \qquad \quad \text{ for a.e. } s \in (0, \infty),\\
     l(s)= &0 \text{ if } \eta(s) \in (a, b) \quad\text{ for a.e. } s \in (0, \infty),
    \end{aligned}
    \right.
    \end{equation}
    where $n:\{a, b\} \to \{-1, 1\}$ is defined by $n(a)=-1$ and $n(b)=1$. From \eqref{eqn:w0lip}, it follows that $v\in  L^\infty \left([0, \infty) \right)$ and
    \begin{equation}\label{eqn:velbd}
    \|v\|_\infty \leq C_1,
    \end{equation}
    for some constant $C_1=C_1\left(\Omega, H, \left\|Du_0\right\|_{L^\infty(\Omega)}\right)>0$. The proof is standard in the literature and is included in the Appendix for the reader's convenience (see Lemma \ref{lem:velbd}).

    Next, we show that
    \[
    w(\alpha, t) - w^{\sbe}(\alpha, t) \leq \tilde{w}\left(|\beta|\right),
    \]
    for some modulus of continuity $\tilde{\omega}$.

    \begin{enumerate}
        \item If $a=a_{\sbe}$ and $b=b_{\sbe}$, then $\left(\eta, v, l\right)$ is an admissible triple for $w^\beta(\alpha, t)$, and
    \begin{equation}
    \begin{aligned}
     w^{\sbe}(\alpha, t) &\geq w_{\zero}^{\sbe}(\eta(t)) - \int_0^t  \tilde{L}_{\sbe} \left(\eta(s), v(s) \right)ds\\
     &\geq w_0(\eta(t))-\omega\left(|\beta|\right)-\int_0^t  \tilde{L} \left(\eta(s), v(s) \right)ds -t\omega_{\tilde{C}}\left(|\beta|\right)\\
     &\geq w(\alpha, t)-\tilde{w}(|\beta|),
    \end{aligned}
    \end{equation}
    where the second inequality follows from \eqref{eqn:w0dif} and \eqref{eqn:Ldif}, and $\tilde{w}$ is some modulus of continuity. Hence, in this case,
    \[
    w(\alpha, t) - w^{\sbe}(\alpha, t) \leq \tilde{w}\left(|\beta|\right).
    \]
        \item Assume $\max \left\{|a_{\sbe}-a|,|b_{\sbe}-b|\right\} >0$. We modify $(\eta, v, l)$ to construct an admissible triple for $w^{\sbe}(\alpha, t)$. Since $a_{\sbe}$ is close to $a$ and $b_{\sbe}$ is close to $b$ for $|\beta|$ sufficiently small, without loss of generality, we can assume
    \[
    a_{\sbe} \leq a < b_{\sbe} \leq b \quad \text{and} \quad \max\{|a - a_{\sbe}|, |b - b_{\sbe}|\} < \tfrac{1}{16}(b- a).
    \]
    The analysis for other orderings of $a_{\sbe}, b_{\sbe}, a, b$ is similar.

    By \cite[Theorem 4.1]{Ishii2011}, there exists a pair $(\eta_{\sbe}, l_{\sbe})\in {\rm AC}_{\rm loc} \left([0, \infty), \mathbb{R}\right) \times L^1_{\rm loc} \left([0, \infty), \mathbb{R}\right)$ satisfying
    \begin{equation}
    \left\{
    \begin{aligned}
     \eta_{\sbe} (0) = &\alpha,\\
     \eta_{\sbe}(s) \in\, & [a_{\sbe}, b_{\sbe}] \qquad\qquad\qquad \,\,\, \text{ for all } s \in [0, \infty),\\
     \dot{\eta}_{\sbe} (s)+l_{\sbe}(s) n_{\sbe}(\eta_{\sbe}(s))=&v(s) \qquad \qquad\qquad \,\,\quad \text{ for a.e. } s \in (0, \infty),\\
     l_{\sbe}(s) \geq& 0 \qquad \qquad \qquad \qquad \quad \text{ for a.e. } s \in (0, \infty),\\
     l_{\sbe}(s)= &0 \text{ if } \eta_{\sbe}(s) \in (a_{\sbe}, b_
     \beta) \quad\text{ for a.e. } s \in (0, \infty),
    \end{aligned}
    \right.
    \end{equation}
    where $n_{\sbe}:\{a_{\sbe}, b_{\sbe}\} \to \{-1, 1\}$ is defined by $n_{\sbe}(a_{\sbe})=-1$ and $n_{\sbe}(b_{\sbe})=1$, that is, $\left(\eta_{\sbe}, v, l_{\sbe}\right) \in \text{SP}_{\sbe}(\alpha)$, and
    \begin{equation}\label{eqn:wbetapath}
    w^{\sbe}(\alpha, t) \geq w_{\zero}^{\sbe}(\eta_{\sbe}(t)) - \int_0^t  \tilde{L}_{\sbe} \left(\eta_{\sbe}(s), v(s) \right)ds.
    \end{equation}

    We claim $|\eta(s) - \eta_{\sbe}(s)| \leq 2 \max \left\{|a_{\sbe}-a|,|b_{\sbe}-b|\right\}$ for $s \in [0, \infty)$. Define
    \[
    t_0:= \inf\left\{s\geq0: |\eta(s) - \eta_{\sbe}(s)| > 2 \max \left\{|a_{\sbe}-a|,|b_{\sbe}-b|\right\} \right\}
    \]
    Suppose $t_0<\infty$. By the definition of $t_0$ and the continuity of $\eta, \eta_{\sbe}$, we have $|\eta(t_0) - \eta_{\sbe}(t_0)| = 2 \max \left\{|a_{\sbe}-a|,|b_{\sbe}-b|\right\}$.

    There are several cases to be considered.
    \begin{enumerate}
    \item If $\eta(t_0) \in (a, b)$ and $\eta_{\sbe}(t_0) \in (a_{\sbe}, b_{\sbe})$, then there exists a constant $\varepsilon>0$ such that $\left|\dot{\eta}(s) - \dot{\eta}_{\sbe}(s)\right|= |v(s)-v(s)|=0$ for all $s\in[t_0, t_0+\varepsilon]$. Hence,$|\eta(s) - \eta_{\sbe}(s)| \leq 2 \max \left\{|a_{\sbe}-a|,|b_{\sbe}-b|\right\}$ for $ s \in [0, t_0+\varepsilon]$.
    \item If $\eta_{\sbe}(t_0)=a_{\sbe}$, then $\eta(t_0) \in (a, b)$.  There exists a constant $\varepsilon>0$ such that $\eta(s) \in (a, b)$ and $\eta_{\sbe}(s) \in[a_{\sbe}, \eta(s))$ for all $s\in[t_0, t_0+\varepsilon]$. For $s\in[t_0, t_0+\varepsilon]$, we have
    \[
    \begin{aligned}
    |\eta(s)-\eta_{\sbe}(s)|=& \eta(s)-\eta_{\sbe}(s)\\
    =& \eta(t_0) - \eta_{\sbe}(t_0)+ \int_{t_0}^s \dot{\eta} (\tau)- \dot{\eta}_{\sbe}(\tau)d\tau\\
    =&\eta(t_0) - \eta_{\sbe}(t_0) +\int_{t_0}^sv(s)-\left(v(s)-l_{\sbe}(s) n_{\sbe}(\eta_{\sbe}(s)\right)ds \\
    =&\eta(t_0) - \eta_{\sbe}(t_0)+ \int_{t_0}^sl_{\sbe}(s) n_{\sbe}(\eta_{\sbe}(s)) d\\
   \leq & \eta(t_0) - \eta_{\sbe}(t_0).
    \end{aligned}
    \]
    Hence, $|\eta(s) - \eta_{\sbe}(s)| \leq 2 \max \left\{|a_{\sbe}-a|,|b_{\sbe}-b|\right\}$ for $ s \in [0, t_0+\varepsilon]$.
     \item If $\eta_{\sbe}(t_0)=b_{\sbe}$, then $\eta(t_0) \in (a, b)$. Note that we must have    $\eta(t_0)
     < \eta_{\sbe}(t_0)$.  Similarly, there exists a constant $\varepsilon>0$ such that $\eta(s) \in (a, b)$ and $\eta_{\sbe}(s) \in(\eta(s), b_{\sbe}]$ for all $s\in[t_0, t_0+\varepsilon]$. For $s\in[t_0, t_0+\varepsilon]$, we have
    \[
    \begin{aligned}
    |\eta(s)-\eta_{\sbe}(s)|=& \eta_{\sbe}(s)-\eta(s)\\
    =&\eta_{\sbe}(t_0)- \eta(t_0) + \int_{t_0}^s \dot{\eta}_{\sbe} (\tau)- \dot{\eta}(\tau)d\tau\\
    =&\eta_{\sbe}(t_0)- \eta(t_0) +\int_{t_0}^s\left(v(s)-l_{\sbe}(s) n_{\sbe}(\eta_{\sbe}(s)\right)-v(s)ds \\
    =&\eta_{\sbe}(t_0)- \eta(t_0)- \int_{t_0}^sl_{\sbe}(s) n_{\sbe}(\eta_{\sbe}(s)) d\\
   \leq &\eta_{\sbe}(t_0)- \eta(t_0).
    \end{aligned}
    \]
    Hence, $|\eta(s) - \eta_{\sbe}(s)| \leq 2 \max \left\{|a_{\sbe}-a|,|b_{\sbe}-b|\right\}$ for $ s \in [0, t_0+\varepsilon]$.
    \item[(d)]If $\eta(t_0)=a$, then $\eta_{\sbe}(t_0) \in \left(a_{\sbe}, b_{\sbe}\right)$. Similar to case {\rm (c)},  we must have $\eta(t_0)
     < \eta_{\sbe}(t_0)$ and there exists a constant $\varepsilon>0$ such that $|\eta(s) - \eta_{\sbe}(s)| \leq 2 \max \left\{|a_{\sbe}-a|,|b_{\sbe}-b|\right\}$ for $ s \in [0, t_0+\varepsilon]$.
    \item[(e)] If $\eta(t_0)=b$, then $\eta_{\sbe}(t_0) \in (a_{\sbe}, b_{\sbe})$. Similar to case {\rm (b)}, there exists a constant $\varepsilon>0$ such that $|\eta(s) - \eta_{\sbe}(s)| \leq 2 \max \left\{|a_{\sbe}-a|,|b_{\sbe}-b|\right\}$ for $ s \in [0, t_0+\varepsilon]$.
    \end{enumerate}

    In each case, there always exists $\varepsilon>0$ such that $|\eta(s) - \eta_{\sbe}(s)| \leq 2 \max \left\{|a_{\sbe}-a|,|b_{\sbe}-b|\right\}$ for $ s \in [0, t_0+\varepsilon]$, which contradicts the definition of $t_0$. Hence, $|\eta(s) - \eta_{\sbe}(s)| \leq 2 \max \left\{|a_{\sbe}-a|,|b_{\sbe}-b|\right\}$ for $s \in [0, \infty)$.

    From \eqref{eqn:Ldif}, \eqref{eqn:w0dif}, \eqref{eqn:wpath}, \eqref{eqn:wbetapath}, and \eqref{eqn:velbd}, we deduce that
    \[
    \begin{aligned}
    &w(\alpha, t)-w^{\sbe}(\alpha, t)\\
    \leq &w_0(\eta(t)) - w_{\zero}^{\sbe}(\eta_{\sbe}(t)) + \int_0^t  \left(\tilde{L}_{\sbe} \left(\eta_{\sbe}(s), v(s) \right) - \tilde{L} \left(\eta(s), v(s) \right) \right) ds,\\
    \leq &\omega \left(|\beta|, 2\max \left\{|a_{\sbe}-a|,|b_{\sbe}-b|\right\}\right) + t\omega_{\tilde{C}}\left(|\beta|, 2\max \left\{|a_{\sbe}-a|,|b_{\sbe}-b|\right\}\right),\\
    \leq &\tilde{\omega}\left(|\beta|\right),
    \end{aligned}
    \]
    where $\tilde{\omega}$ is a modulus of continuity and the last inequailty follows from \eqref{eqn:endpointsdif}.
    \end{enumerate}

     A symmetric argument yields
    \[
    w^{\sbe}(\alpha, t) - w(\alpha, t) \leq \hat{\omega}\left(|\beta|\right),
    \]
    for some modulus of continuity $\hat{w}$. The key point is for any $\alpha \in [a_{\sbe}, b_{\sbe}]$, the velocity bound $C_1>0$ in \eqref{eqn:velbd} still holds for any $v_{\sbe}$ in triple $\left(\eta_{\sbe}, l_{\sbe}, v_{\sbe}\right)\in \mathrm{SP}_{\sbe} (\alpha)$  such that
    \[
    w^{\sbe} (\alpha, t)=w_{\zero}^{\sbe}(\eta_{\sbe}(t)) - \int_0^t  \tilde{L}_{\sbe} \left(\eta_{\sbe}(s), v_{\sbe}(s) \right)ds.
    \]
    That is, each optimal trajectory satisfying the Skorokhod problem under the $\beta$-perturbation maintains the same uniform velocity bound as in the unperturbed setting (See Lemma \ref{lem:velbd}). Hence, the conclusion holds for $u_0 \in W^{1, \infty}(\Omega) \cap C(\overline{\Omega})$.

    Step 2: If $u_0 \in C(\overline{\Omega})$, we can choose a sequence $\left\{\uzk\right\}_{k\in \mathbb{N}} \subset W^{1,\infty}\cap C(\overline{\Omega})$ such that $|\uzk(x)-u_0(x)|\leq \frac{1}{k}$ for all $x \in \overline{\Omega}$. For $\alpha \in [a, b]$, define $\wzk(\alpha):=\uzk(\vec{x}_0 +\alpha\vec{\nu})$, and for $\alpha'\in [a_{\sbe}, b_{\sbe}]$, define $\wzk^{\sbe}(\alpha'):=\uzk\left(\vec{x}_0+\beta\vec{\nu}_\perp +\alpha' \vec{\nu}\right)$. Let $w_{\sk}:[a, b] \to \mathbb{R}$ be the solution to \eqref{HJE_NM} with initial data $\wzk$, and $w_{\sk}^{\sbe}:[a_{\sbe}, b_{\sbe}] \to \mathbb{R}$ be the solution to \eqref{eqn:limitbeta} with initial data $\wzk^{\sbe}$.

    We know
    \[
    w_{\sk}(\alpha, t) = \sup\left\{\left.\wzk(\eta(t))-\int_0^t \tilde{L}\left(\eta(s), v(s) \right) ds \right|\left(\eta, v, l\right) \in {\rm SP}(\alpha)\right\},
    \]
    By comparison with \eqref{eqn:1dvf}, we deduce that for any $\alpha \in [a, b]$,
    \begin{equation}
    \left| w_{\sk}(\alpha, t) -w(\alpha,t)\right| \leq \max_{\alpha \in [a, b]} \left|\wzk(\alpha)-w_0(\alpha) \right| \leq \frac{1}{k}.
    \end{equation}
    Similarly, for any $\alpha' \in [a_{\sbe}, b_{\sbe}]$,
    \begin{equation}
    \left| w_{\sk}^{\sbe}(\alpha', t) -w^{\sbe}(\alpha',t)\right|\leq \frac{1}{k}.
    \end{equation}
    Now, for each $k\in \mathbb{N}$, since $\uzk \in W^{1,\infty}(\Omega) \cap C(\overline{\Omega})$, it follows from Step 1 that for any $\alpha \in (a, b)$ and $t\geq 0$,
    \begin{equation}
    \lim_{\beta \to 0} w_{\sk}^{\sbe}(\alpha,t) =w_{\sk}(\alpha, t).
    \end{equation}
    Therefore, for sufficiently small $|\beta|$, we have
    \[
    \begin{aligned}
    \left|w^{\sbe} (\alpha, t) -w(\alpha, t)\right|&\leq\left|w^{\sbe} (\alpha, t) -w_{\sk}^{\sbe}(\alpha, t)\right| + \left|w_{\sk}^{\sbe} (\alpha, t) -w_{\sk}(\alpha, t)\right|+\left|w_{\sk} (\alpha, t) -w(\alpha, t)\right|\\
    &\leq \left|w_{\sk}^{\sbe} (\alpha, t) -w_{\sk}(\alpha, t)\right| + \frac{2}{k}
    \end{aligned}
    \]
    Sending $\beta \to 0$ and then $k \to \infty$, we obtain the desired conclusion.
\end{proof}

We have shown that the solutions $w^{\sbe}$ and $w$ to the limiting equations are close when $|\beta|$ is small. Moreover, the discrete solutions $\wrh$ and $\wrhpb$ are close when their domains $\torh, \tobrhp$ have the same number of grid points and the mesh sizes $h'$ and $h$ are comparable. A more precise statement is given in the following lemma.

\begin{lem}\label{lem:endisbeta} Let $u_0 \in W^{1, \infty}(\Omega) \cap C(\overline{\Omega})$, and let the mesh sizes $h, h' > 0$ satisfy $|h' - h| = \mathcal{O}(h^2)$. Suppose $r, \beta \in \mathbb{R}$ are small. Fix $\vx_0 \in \overline{\Omega}$. Let $\wrh: \torh \to \mathbb{R}$ be the solution to \eqref{eqn:1dv_h} with initial condition
\[
w_0(r + \aldh) = u_0\left(\vx_0 + (r + \aldh)\vec{\nu}\right), \quad \text{for all } r + \aldh \in \torh.
\]
Let $\wrhpb: \tobrhp \to \mathbb{R}$ be the solution to \eqref{eqn:1ddistbeta} with initial condition
\[
w_{\zero}^{\sbe}(r + \alpha_{h'}) = u_0\left(\vx_0 + (r + \alpha_{h'})\vec{\nu} + \beta \vec{\nu}_\perp\right), \quad \text{for all } r + \alpha_{h'} \in \tobrhp.
\]
Assume both meshes have the same cardinality, i.e. $\left|\torh\right|=\left|\tobrhp\right|=n$ for some $n\in \mathbb{N}$. Label the points in $\torh$ by increasing order as
        \[
        a\leq r+k_1 h < \cdots <r+k_i h< \cdots < r+k_n h\leq b, \quad i=1, 2, \cdots, n,
        \]
        and likewise label those in  $\tobrhp$ as
        \[
        a_{\sbe}\leq r+k'_1 h' < \cdots <r+k'_i h'< \cdots < r+k'_n h'\leq b_{\sbe}, \quad i=1, 2, \cdots, n,
        \]
where $k_i, k'_i \in \mathbb{Z}$, $i=1, 2, \cdots, n$. Note $k_{i+1}=k_i+1$ and $k'_{i+1}=k_i'+1$ for $i\in\{1,2, \cdots, n-1\}$.

Then, for any $i \in \left\{1, 2, \cdots, n\right\}$ and $t \geq 0$,
\[
\left|\wrh(r+k_ih, t)-\wrhpb(r+k'_ih',t)\right|\leq \omega_t(|\beta|, |k_ih-k'_ih'|, h),
\]
where $\omega_t$ is a modulus of continuity that depends on $t$.
\end{lem}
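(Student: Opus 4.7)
The plan is to pull $\wrhpb$ back to the reference lattice $\torh$ via the index-preserving bijection $r+k_ih\leftrightarrow r+k_i'h'$ and then compare with $\wrh$ using the monotonicity structure of $\tHrh$ together with the uniform derivative estimates of Section~2. Concretely, I define $\bar w(r+k_ih,t):=\wrhpb(r+k_i'h',t)$ on $\torh\times[0,\infty)$. Because the two lattices share the same cardinality $n$ and the same indexing structure, the boundary indicators in \eqref{eqn:1ddistbeta} transfer to the same indicators $\mathbbm{1}_{\{i\ge 2\}}$ and $\mathbbm{1}_{\{i\le n-1\}}$ under $\phi$; consequently $\bar w$ satisfies a discrete HJE $\pt_t\bar w=\tilde G\bar w$ on $\torh$ whose Hamiltonian $\tilde G$ has the same form as $\tHrh$ except that the coefficients $\tilde\Phi^\pm(r+k_ih)$ are replaced by $\tilde\Phi_{\sbe}^\pm(r+k_i'h')$ and the mesh size $h$ in the exponents is replaced by $h'$.

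The second step is a discrete maximum-principle / Gronwall argument. With $\varepsilon_i(t):=\wrh(r+k_ih,t)-\bar w(r+k_ih,t)$ and $M(t):=\max_i\varepsilon_i(t)$, the maximum is attained at some index $i^\ast(t)$, at which $\wrh_{i^\ast\pm1}-\wrh_{i^\ast}\le \bar w_{i^\ast\pm1}-\bar w_{i^\ast}$. Since $\tHrh$ has positive coefficients $\tilde\Phi^\pm$ and depends monotonically (via $e^{\cdot/h}-1$) on these forward and backward differences, it follows that $\tHrh(\wrh)|_{i^\ast}-\tHrh(\bar w)|_{i^\ast}\le 0$, and the boundary indicators do not alter this sign. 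Hence
\[
\dot M(t)\le \bigl(\tHrh(\bar w)-\tilde G(\bar w)\bigr)\bigl|_{i^\ast(t)}\le \|(\tHrh-\tilde G)\bar w(\cdot,t)\|_{\ell^\8(\torh)},
\]
and the symmetric estimate for $-\min_i\varepsilon_i$ yields
\[
\|\wrh(\cdot,t)-\bar w(\cdot,t)\|_{\ell^\8(\torh)}\le \|w_0-\bar w(\cdot,0)\|_{\ell^\8(\torh)}+\int_0^t\!\|(\tHrh-\tilde G)\bar w(\cdot,s)\|_{\ell^\8(\torh)}\,ds.
\]

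I then estimate the two terms on the right-hand side. The initial-data difference is bounded by $\|Du_0\|_{L^\8(\Omega)}\bigl(|k_ih-k_i'h'|\,|\vec\nu|+|\beta|\,|\vec\nu_\perp|\bigr)$ directly from $u_0\in W^{1,\infty}$. For the Hamiltonian discrepancy, setting $p_i:=\bar w(r+k_{i+1}h,s)-\bar w(r+k_ih,s)$ and decomposing
\[
\tilde\Phi^+(r+k_ih)(e^{p_i/h}-1)-\tilde\Phi_{\sbe}^+(r+k_i'h')(e^{p_i/h'}-1)=\tilde\Phi^+(r+k_ih)\bigl(e^{p_i/h}-e^{p_i/h'}\bigr)+\bigl(\tilde\Phi^+(r+k_ih)-\tilde\Phi_{\sbe}^+(r+k_i'h')\bigr)\bigl(e^{p_i/h'}-1\bigr),
\]
the second summand is $\mathcal O(|\beta|+\max_j|k_jh-k_j'h'|)$ by the Lipschitz continuity of $\Phi^+$, and the backward term is handled identically. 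For the first summand, $|p_i/h-p_i/h'|=|p_i|\,|h'-h|/(hh')$, so the hypothesis $|h'-h|=\mathcal O(h^2)$ together with a uniform-in-$h$ bound $|p_i|/h\le L$ gives $|e^{p_i/h}-e^{p_i/h'}|=\mathcal O(h)$.

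The main obstacle is securing this uniform-in-$h$ spatial Lipschitz bound $|p_i|/h\le L$. I derive it by combining the uniform time-derivative bound $\|\pt_t\bar w\|_{\ell^\8}\le C_0$ from Proposition~\ref{prop:unibduhpt} with the coercivity of $\tilde G$: at each interior site the equation gives $|\tilde\Phi_{\sbe}^+(e^{q_i^+}-1)+\tilde\Phi_{\sbe}^-(e^{-q_{i-1}^+}-1)|\le C_0$ with $q_i^+:=p_i/h'$, and the uniform positive lower bound \eqref{eq:lower} on $\tilde\Phi_{\sbe}^\pm$ converts this into a two-sided bound $|q_i^+|\le L$ independent of $h$; at the two endpoints only one exponential term is present, and the matching complementary one-sided bound is propagated from the adjacent interior site through its own equation. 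Assembling everything produces
\[
\|\wrh(\cdot,t)-\bar w(\cdot,t)\|_{\ell^\8(\torh)}\le C\bigl(|\beta|+\max_j|k_jh-k_j'h'|\bigr)+Ct\bigl(|\beta|+\max_j|k_jh-k_j'h'|+h\bigr),
\]
and since $\max_j|k_jh-k_j'h'|$ differs from any individual $|k_ih-k_i'h'|$ by at most $n|h-h'|=\mathcal O(h)$, this is of the stated form $\omega_t(|\beta|,|k_ih-k_i'h'|,h)$.
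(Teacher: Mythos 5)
Your argument is correct, and it reaches the estimate by the same three core ingredients as the paper — a max-point comparison, the splitting of the Hamiltonian discrepancy into a coefficient-difference part and an exponent-difference part, and a uniform-in-$h$ bound on the discrete difference quotients combined with $|h'-h|=\mathcal{O}(h^2)$ — but the implementation differs in where the time variable is handled. The paper never differentiates the maximum in time: it works with the backward-Euler resolvents $\left(I-\Delta t\,\tHrh\right)^{-1}$ and $\left(I-\Delta t\,\tHrhpb\right)^{-1}$, performs the max-point argument algebraically on a single implicit Euler step (picking $i^\ast$ maximizing $\twrh-\twrhpb$, so that the $h'$-weighted part of the exponent difference is nonpositive), accumulates $\Delta t\,\omega(\cdots)$ per step over $\lfloor t/\Delta t\rfloor$ steps, and then sends $\Delta t\to 0$ via the Crandall--Liggett representation from Proposition \ref{prop:propdis}; the uniform difference-quotient bound is obtained for the resolvent iterates from Proposition \ref{prop:uhproperty} together with \eqref{eq:lower}. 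You instead pull $\wrhpb$ back to $\torh$, differentiate $M(t)=\max_i\varepsilon_i(t)$ directly (which requires the standard a.e.-differentiability/Danskin remark for the max of finitely many $C^1$ functions — worth stating explicitly, though unproblematic since Proposition \ref{prop:propdis}(iii) gives $C^1$ regularity in time), and obtain the gradient bound from the time-derivative bound of Proposition \ref{prop:unibduhpt} plus coercivity, including the correct one-sided bookkeeping at the two endpoints. What the paper's route buys is that the max-point inequality is purely algebraic and no differentiation of a maximum is needed; what your route buys is that the comparison is done once at the PDE level rather than per Euler step, and the $\tHrh(\wrh)-\tHrh(\bar w)$ term is killed outright by monotonicity before any mean-value-theorem manipulation. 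Your closing observation that $\max_j|k_jh-k_j'h'|$ differs from each individual $|k_ih-k_i'h'|$ by at most $n|h-h'|=\mathcal{O}(h)$ correctly reconciles your uniform bound with the pointwise form of the stated modulus.
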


\begin{proof}
For $t=0$, the result follows from the fact that $u_0 \in W^{1, \infty}(\Omega) \cap C(\overline{\Omega})$.

Fix $t>0$. Let $\Delta t>0$. Consider the resolvents associated with $\wrh$ and $\wrhpb$, denoted respectively by
\[
\tilde{J}_{\Delta t, h} := \left(I-\Delta t \tHrh\right)^{-1}, \qquad \tilde{J}_{\Delta t, h'}^{\sbe} := \left(I-\Delta t\tHrhpb\right)^{-1},
\]
where $\tHrh$ and $\tHrhpb$ are defined in \eqref{eqn:defHtildeh} and \eqref{eqn:defHtildehbeta}, respectively.
Recall that by Proposition \ref{prop:propdis},
\[
\wrh(\cdot, t) = \lim_{\Delta t \to 0}\left(I-\Delta t \tHrh\right)^{-\left[\frac{t}{\Delta t}\right]}w_0, \qquad \wrhpb(\cdot, t) = \lim_{\Delta t \to 0}\left(I-\Delta t\tHrhpb\right)^{-\left[\frac{t}{\Delta t}\right]}w_{\zero}^{\sbe}.
\]

Let $\twrh$ solve $\twrh = \left(I-\Delta t \tHrh \right)^{-1}w_0$, and let $\twrhpb$ solve $\twrhpb = \left(I-\Delta t\tHrhpb\right)^{-1}w_{\zero}^{\sbe}$.

Since $u_0 \in W^{1, \infty}(\Omega) \cap C(\overline{\Omega})$, we have $ w_0 \in \mathrm{Lip}([a, b])$ and $w_{\zero}^{\sbe} \in \mathrm{Lip}([a_{\sbe}, b_{\sbe}])$ with the same Lipschitz constant $\left\| Du_0\right\|_{L^\infty(\Omega)}$. Then for any $i \in \left\{1, 2, \cdots, n\right\}$,
\begin{equation} \label{eqn:Hhw0bound}
    \begin{aligned}
\left|\tHrh\left(r+k_ih , w_0(r+k_ih),w_0\right) \right|=&\left|\mathbbm{1}_{\{i \neq n\}}  \tilde{\Phi}^+(r+k_ih) \left( e^{\frac{w_0(r+k_ih+ h)-w_0(r+k_ih)}{h}} - 1\right)\right.\\ &+ \left. \mathbbm{1}_{\{i \neq 1\}}\tilde{\Phi}^-(r+k_ih)\left( e^{  \frac{w_0(r+k_ih- h)-w_0(r+k_ih)}{h} } - 1\right)\right|\\
\leq C_2&
    \end{aligned}
\end{equation}
for some constant $C_2=C_2(\left\| Du_0\right\|_{L^\infty(\Omega)}, \Omega, \Phi^+, \Phi^-)>0$. From Proposition \ref{prop:uhproperty}, for any $i \in \left\{1, 2, \cdots, n\right\}$,
\begin{equation}\label{eqn:unibdHhwh}
\begin{aligned}
\left|\tHrh\left(r+k_ih , \twrh(r+k_ih),\twrh\right) \right|&\leq
\left\| \tHrh \twrh\right\|_{L^\infty \left(\torh\right)}
\leq \left\| \tHrh w_0\right\|_{L^\infty \left(\torh\right)}
\leq C_2,
\end{aligned}
\end{equation}
where the last inequality comes from \eqref{eqn:Hhw0bound}. Similarly,
\begin{equation}
\left\| \tHrhpb  \twrhpb\right\|_{L^\infty\left(\tobrhp\right)} \leq \left\|\tHrhpb w_{\zero}^{\sbe}\right\|_{L^\infty\left(\tobrhp\right)} \leq C_2.
\end{equation}
From \eqref{eqn:unibdHhwh}  and \eqref{eq:lower}, there exists a constant $C_3=C_3(\left\| Du_0\right\|_{L^\infty(\Omega)}, \Omega, \Phi^+, \Phi^-)>0$ such that for any $i \in \left\{1, 2, \cdots, n-1\right\}$,
\[
\frac{\twrh \left(r+k_ih+ h, t\right)-\twrh \left(r+k_ih, t\right)}{h} \leq C_3,\]
and for any $i \in \left\{2, \cdots,n \right\}$
\[\frac{\twrh \left(r+k_ih- h, t\right)-\twrh \left(r+k_ih, t\right)}{h} \leq C_3,
\]
which implies for any $i \in \left\{1, 2, \cdots,n-1 \right\}$,
\begin{equation}
    \left|\frac{\twrh \left(r+k_i h+h, t\right)-\twrh \left(r+k_ih, t\right)}{h} \right|\leq C_3,
\end{equation}

and
\begin{equation}\label{eqn:unibdewh}
    \left|e^{ \frac{\twrh \left(r+k_ih+ h, t\right)-\twrh \left(r+k_ih, t\right)}{h} } -1 \right|\leq C_4,
\end{equation}
for some constant $C_4=C_4\left(\left\| Du_0\right\|_{L^\infty(\Omega)}, \Omega, \Phi^+, \Phi^-\right)>0$.

Similarly for $\twrhpb$, it holds that for any $i \in \left\{1, 2, \cdots,n-1 \right\}$,
\begin{equation}\label{eqn:ubdwhbdif}
\left|\frac{\twrhpb\left(r+k_ih'+ h', t\right)-\twrhpb\left(r+k_ih', t\right)}{h'} \right|\leq C_3.
\end{equation}

Now, choose ${i^\ast} \in \left\{1,2, \cdots, n\right\}$ such that
\begin{equation}\label{eqn:defistar}
\twrh(r+k_{i^\ast} h, t)-\twrhpb(r+k'_{i^\ast}h',t)=\max\left\{\twrh(r+k_ih, t)-\twrhpb(r+k'_ih',t):i=1, 2, \cdots, n\right\}.
\end{equation}
We assume $i^\ast \neq 1, n$; the boundary cases are analogous.

For $i^* \in \{2, \dots, n-1\}$,
\begin{equation}\label{eqn:whwhbdiff}
    \begin{aligned}
 \twrh(r+k_{i^\ast} h, t)-\twrhpb(r+k'_{i^\ast}h',t)=&\Delta t \tilde{\Phi}^+(r+k_{i^\ast}h) \left( e^{\frac{\twrh \left(r+k_{i^\ast}h+ h, t\right)-\twrh \left(r+k_{i^\ast}h, t\right)}{h}} - 1\right)\\&-\Delta t \tilde{\Phi}^+_{\sbe}\left(r+k'_{i^\ast}h'\right) \left( e^{\frac{\twrhpb \left(r+k'_{i^\ast}h'+ h', t\right)-\twrhpb\left(r+k'_{i^\ast}h', t\right)}{h'}} - 1\right)\\ &+\Delta t \tilde{\Phi}^-(r+k_{i^\ast}h)\left( e^{  \frac{\twrh \left(r+k_{i^\ast}h- h, t\right)-\twrh \left(r+k_{i^\ast}h, t\right)}{h} } - 1\right) \\
 &-\Delta t \tilde{\Phi}^-_{\sbe}(r+k'_{i^\ast}h')\left( e^{  \frac{\twrhpb \left(r+k'_{i^\ast}h'- h', t\right)-\twrhpb\left(r+k'_{i^\ast}h', t\right)}{h'} } - 1\right) \\
 &+w_0(r+k_{i^\ast} h)-w_{\zero}^{\sbe}(r+k'_{i^\ast} h')\\
=& \mathrm{I+II}+w_0(r+k_{i^\ast} h)-w_{\zero}^{\sbe}(r+k'_{i^\ast} h'),
\end{aligned}
\end{equation}
where
\[
    \begin{aligned}
\mathrm{I}:=& \Delta t\tilde{\Phi}^+(r+k_{i^\ast}h) \left( e^{\frac{\twrh \left(r+k_{i^\ast}h+ h, t\right)-\twrh \left(r+k_{i^\ast}h, t\right)}{h}} - 1\right)\\&-\Delta t\tilde{\Phi}^+_{\sbe}\left(r+k'_{i^\ast}h'\right) \left( e^{\frac{\twrhpb \left(r+k'_{i^\ast}h'+ h', t\right)-\twrhpb\left(r+k'_{i^\ast}h', t\right)}{h'}} - 1\right),
    \end{aligned}
\]

\[
    \begin{aligned}
\mathrm{II}:=&\Delta t\tilde{\Phi}^-(r+k_{i^\ast}h)\left( e^{  \frac{\twrh \left(r+k_{i^\ast}h- h, t\right)-\twrh \left(r+k_{i^\ast}h, t\right)}{h} } - 1\right) \\
 &-\Delta t\tilde{\Phi}^-_{\sbe}(r+k'_{i^\ast}h')\left( e^{  \frac{\twrhpb \left(r+k'_{i^\ast}h'- h', t\right)-\twrhpb\left(r+k'_{i^\ast}h', t\right)}{h'} } - 1\right).
    \end{aligned}
\]

We estimate $\mathrm{I}$ first. Write
\begin{equation}\label{eqn:I}
    \mathrm{I} =A+B,
\end{equation}
where
\[
A:=\Delta t\left(\tilde{\Phi}^+(r+k_{i^\ast}h)-\tilde{\Phi}^+_{\sbe}\left(r+k'_{i^\ast}h'\right)\right)\left( e^{\frac{\twrh \left(r+k_{i^\ast}h+ h, t\right)-\twrh \left(r+k_{i^\ast}h, t\right)}{h}} - 1\right)
\]

and
\[
B:=\Delta t\tilde{\Phi}^+_{\sbe}\left(r+k'_{i^\ast}h'\right)\left( e^{\frac{\twrh \left(r+k_{i^\ast}h+ h, t\right)-\twrh \left(r+k_{i^\ast}h, t\right)}{h}} - e^{\frac{\twrhpb \left(r+k'_{i^\ast}h'+ h', t\right)-\twrhpb\left(r+k'_{i^\ast}h', t\right)}{h'}} \right).
\]

From \eqref{eqn:unibdewh}, we know

\[
|A|\leq\Delta t C_4 \omega^+\left(\max_{i\in\{1, 2, \cdots, n\}} |k_ih-k'_ih'|, \left|\beta\right|\right),
\]
where $\omega^+$ is a modulus of continuity of $\Phi^+$ and $C_4$ comes from \eqref{eqn:unibdewh}.

For $B$, by the mean value theorem, there exists a constant $\xi \leq C_3$ such that
\begin{equation}\label{eqn:B}
\begin{aligned}
B=&\Delta t\tilde{\Phi}^+_{\sbe}\left(r+k'_{i^\ast}h'\right) e^\xi \left(\frac{\twrh \left(r+k_{i^\ast}h+ h, t\right)-\twrh \left(r+k_{i^\ast}h, t\right)}{h} \right.\\
&\left.\qquad \qquad \qquad \qquad \qquad \qquad \qquad\qquad\qquad-\frac{\twrhpb \left(r+k'_{i^\ast}h'+ h', t\right)-\twrhpb\left(r+k'_{i^\ast}h', t\right)}{h'}\right), \\
\end{aligned}
\end{equation}
where
\[
\begin{aligned}
&\frac{\twrh \left(r+k_{i^\ast}h+ h, t\right)-\twrh \left(r+k_{i^\ast}h, t\right)}{h}-\frac{\twrhpb \left(r+k'_{i^\ast}h'+ h', t\right)-\twrhpb\left(r+k'_{i^\ast}h', t\right)}{h'}\\
=&\frac{h'\left(\twrh \left(r+k_{i^\ast}h+ h, t\right)-\twrhpb \left(r+k'_{i^\ast}h'+ h', t\right)-\left(\twrh \left(r+k_{i^\ast}h, t\right)-\twrhpb\left(r+k'_{i^\ast}h', t\right)\right)\right)}{hh'}\\
&+\frac{\left(h'-h\right)\left(\twrhpb \left(r+k'_{i^\ast}h'+ h', t\right)-\twrhpb\left(r+k'_{i^\ast}h', t\right)\right)}{hh'}\\
\leq & \frac{\left(h'-h\right)\left(\twrhpb \left(r+k'_{i^\ast}h'+ h', t\right)-\twrhpb\left(r+k'_{i^\ast}h', t\right)\right)}{hh'}\\
\leq &Ch,
\end{aligned}
\]
for some constant $C>0$. The last two inequality come from \eqref{eqn:defistar}, \eqref{eqn:ubdwhbdif} and the assumption that $\left|h'-h\right|\leq O(h^2)$. Hence we have $B \leq \Delta t Ch$ for some constant $C=C\left(\left\| Du_0\right\|_{L^\infty(\Omega)}, \Omega, \Phi^+, \Phi^-\right)>0$. Therefore, from \eqref{eqn:I}, we have
\[
\mathrm{I} \leq \Delta t \omega \left(\max_{i\in\{1, 2, \cdots, n\}} |k_ih-k'_ih'|,\left|\beta\right|, h\right),
\]
for some modulus of continuity $\omega$. By the same argument, we can show
\[
\mathrm{II}\leq \Delta t \omega \left( \max_{i\in\{1, 2, \cdots, n\}}|k_ih-k'_ih'|,\left|\beta\right|, h\right)
\]
for some modulus of continuity $\omega$.

Therefore, from \eqref{eqn:whwhbdiff}, for any $i \in \left\{1, 2, \cdots, n\right\}$,
\begin{equation}
\begin{aligned}
&\max_{i\in\{1, 2, \cdots, n\}} \left(\twrh(r+k_ih, t)-\twrhpb(r+k'_ih',t)\right) \\
\leq &  \max_{i\in\{1, 2, \cdots, n\}}\left(w_0(r+k_ih)-w_{\zero}^{\sbe}(r+k'_ih')\right)+\Delta t \omega \left( \max_{i\in\{1, 2, \cdots, n\}}|k_ih-k'_ih'|,\left|\beta\right|, h\right)
\end{aligned}
\end{equation}
for some modulus of continuity $\omega$.
Similarly, we can also show
\begin{equation}
\begin{aligned}
&\max_{i\in\{1, 2, \cdots, n\}}\left(\twrhpb(r+k'_ih',t)-\twrh(r+k_ih, t)\right)\\  \leq &\max_{i\in\{1, 2, \cdots, n\}}\left(w_{\zero}^{\sbe}(r+k_ih)-w_0(r+k'_ih')\right)+\Delta t \omega \left( \max_{i\in\{1, 2, \cdots, n\}}|k'_ih'-k_ih|,\left|\beta\right|, h\right).
\end{aligned}
\end{equation}
Therefore,
\begin{equation}
\begin{aligned}
    &\max_{i\in\{1, 2, \cdots, n\}}\left|\twrh(r+k_ih, t)-\twrhpb(r+k'_ih',t)\right|\\
    \leq &  \max_{{i\in\{1, 2, \cdots, n\}}}\left|w_0(r+k'_ih')-w_{\zero}^{\sbe}(r+k_ih)\right| +\Delta t \omega \left( \max_{i\in\{1, 2, \cdots, n\}}|k'_ih'-k_ih|,\left|\beta\right|, h\right).
\end{aligned}
\end{equation}
For any $\Delta t>0$, by iterating the resolvent operators, we obtain
\begin{equation}
\begin{aligned}
    &\max_{i\in\{1, 2, \cdots, n\}}\left|\left(I-\Delta t \tilde{H}_{h'}\right)^{-\left[\frac{t}{\Delta t}\right]}w_0-\left(I-\Delta t \tHrhpb \right)^{-\left[\frac{t}{\Delta t}\right]}w_{\zero}^{\sbe}\right|\\
    \leq &  \max_{{i\in\{1, 2, \cdots, n\}}}\left|w_0(r+k'_ih')-w_{\zero}^{\sbe}(r+k_ih)\right| +\left[\frac{t}{\Delta t}\right]\Delta t \omega \left( \max_{i\in\{1, 2, \cdots, n\}}|k'_ih'-k_ih|,\left|\beta\right|, h\right)\\
    \leq &  \max_{{i\in\{1, 2, \cdots, n\}}}\left|w_0(r+k'_ih')-w_{\zero}^{\sbe}(r+k_ih)\right| +t \omega \left( \max_{i\in\{1, 2, \cdots, n\}}|k'_ih'-k_ih|,\left|\beta\right|, h\right),
\end{aligned}
\end{equation}
and by sending $\Delta t \to 0$, we have
\begin{equation}
\begin{aligned}
    &\max_{i\in\{1, 2, \cdots, n\}}\left|\wrh(r+k_ih, t)-\wrhpb(r+k'_ih',t)\right| \\
    \leq & \max_{{i\in\{1, 2, \cdots, n\}}}\left|w_0(r+k'_ih')-w_{\zero}^{\sbe}(r+k_ih)\right| +t \omega \left( \max_{i\in\{1, 2, \cdots, n\}}|k'_ih'-k_ih|,\left|\beta\right|,h\right),\\
    \leq & \omega_t\left(\max_{i\in\{1, 2, \cdots, n\}}|k'_ih'-k_ih|, |\beta| h\right)
\end{aligned}
\end{equation}
for some modulus of continuity $\omega_t$, where the last inequality follows from \eqref{eqn:w0dif}.
\end{proof}

We are now ready to prove that the discrete solution converges to the continuous limiting solution $w$, even when the discrete grid points lie outside the line segment $S_0$ passing through $\vx_0$ in the direction of $\vec{\nu}$.


\begin{thm}\label{thm:convergelbeta}
Assume $\Omega$ is convex. Let $u_0 \in C(\overline{\Omega})$. Fix $t \geq 0$ and $\vx_0 \in \overline{\Omega}$. Let $h > 0$, $\rh \in \mathbb{R}$ with $|\rh|<h$, and suppose $\beh \in \mathbb{R}$ satisfies $\max\left\{\omega_0(|\beh|), |\beh|\right\}\leq h$, where $\omega_0$ is the modulus of continuity defined in \eqref{eqn:endpointsdif}.

Let $w:[a,b] \to \mathbb{R}$ be the solution to \eqref{HJE_NM} with initial condition $w_0(\alpha) = u_0(\vx_0 + \alpha \vec{\nu})$ for any $\alpha \in [a, b]$, and let $\wrhhbh:\tobhrhh\to \mathbb{R}$ be the solution to \eqref{eqn:1ddistbeta} with initial condition
\[
w_{\zero}^{\sbe_{\V}}\left(\rh + \alhi\right) = u_0\left(\vx_0 + \beh \vec{\nu}_\perp + (\rh + \alhi) \vec{\nu}\right)
\quad \text{for any } \rh + \alhi \in \tobhrhh.
\]
Assume that $\rh + k_{\V} h \in \tobhrhh$ for some $k_{\V} \in \mathbb{Z}$ such that $\lim_{h \to 0^+}  k_{\V} h = \alpha_0$ for some $\alpha_0 \in[a, b]$. Then,
\[
\lim_{h\to 0^+}\wrhhbh(\rh+ k_{\V} h, t)=w(\alpha_0,t).
\]
\end{thm}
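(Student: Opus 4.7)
The strategy is to bridge $\wrhhbh$ (living on a mesh offset from $S_0$ by $\beh \vec{\nu}_\perp$) to a companion $\beta = 0$ discrete solution on $S_0$ via Lemma \ref{lem:endisbeta}, and then to pass to the continuous limit using Corollary \ref{cor:1dconverge}. I would first reduce to $u_0 \in W^{1,\infty}(\Omega) \cap C(\overline{\Omega})$ by approximation: choose $\{\uzk\}_{\sk\in\mathbb{N}} \subset W^{1,\infty}(\Omega) \cap C(\overline{\Omega})$ with $\|\uzk - u_0\|_{L^\infty(\overline{\Omega})} \leq 1/k$. By the contraction bound in Proposition \ref{prop:propdis}(i), the discrete solutions associated with $\uzk$ differ from $\wrhhbh$ by at most $1/k$ uniformly in $h$; by Corollary \ref{cor:1dcontcontract} the corresponding continuous solutions differ from $w$ by at most $1/k$. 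A triangle-inequality argument then reduces the theorem to Lipschitz initial data.

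For Lipschitz $u_0$, I would construct an auxiliary $\beta = 0$ mesh on $S_0$ sharing the offset $\rh$, with a slightly different grid size $\tilde{h} = \tilde{h}(h)$ and matching cardinality $n_{\V} := |\tobhrhh|$. The hypothesis $\max\{\omega_0(|\beh|), |\beh|\} \leq h$ forces $|a_{\sbe_{\V}} - a|,\, |b_{\sbe_{\V}} - b| \leq h$, so $n_{\V} h \to b - a$, and a direct count shows that $\tilde{h}$ can be chosen with $|h - \tilde{h}| = O(h^2)$. Let $k_{\V}^{\ast} \in \mathbb{Z}$ be the integer such that $\rh + k_{\V}^{\ast} \tilde{h}$ is the $i$-th smallest point of the auxiliary mesh, where $i$ is the index of $\rh + k_{\V} h$ in $\tobhrhh$ under the natural ordering. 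A short computation using $k_{\V} h \to \alpha_0$ and $|h - \tilde{h}| = O(h^2)$ yields $\rh + k_{\V}^{\ast} \tilde{h} \to \alpha_0$ and $|k_{\V}^{\ast} \tilde{h} - k_{\V} h| = O(h)$. Lemma \ref{lem:endisbeta}, applied with grid sizes $\tilde{h}$ (for the $\beta = 0$ mesh) and $h$ (for the $\beta = \beh$ mesh) at the matched index $i$, then yields
\[
\bigl|w_{\rh, \tilde{h}}(\rh + k_{\V}^{\ast} \tilde{h}, t) - \wrhhbh(\rh + k_{\V} h, t)\bigr| \leq \omega_t\bigl(|\beh|,\, |k_{\V}^{\ast} \tilde{h} - k_{\V} h|,\, \tilde{h}\bigr),
\]
whose right-hand side tends to zero as $h \to 0^+$. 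Corollary \ref{cor:1dconverge} gives $w_{\rh, \tilde{h}}(\rh + k_{\V}^{\ast} \tilde{h}, t) \to w(\alpha_0, t)$, and combining the two estimates completes the proof.

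The principal obstacle is the coordination of the mesh-matching: $\tilde{h}$ and the indexing $k_{\V}^{\ast}$ must be chosen so that the cardinalities align exactly, the rate $|h - \tilde{h}| = O(h^2)$ required by Lemma \ref{lem:endisbeta} is honored, and the distinguished auxiliary grid point stays near $\alpha_0$. A small case analysis near $\alpha_0 \in \{a, b\}$ may be needed because the integer boundary indices of the two meshes can differ, but Corollary \ref{cor:1dconverge} is valid on the entire closed interval $[a, b]$, so no new continuous-limit phenomena appear there.
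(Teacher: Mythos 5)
Your proposal is correct and follows essentially the same route as the paper: reduce to Lipschitz data by density (using Proposition \ref{prop:propdis} and Corollary \ref{cor:1dcontcontract}), build a cardinality-matched $\beta=0$ mesh on $S_0$ with grid size $\tilde{h}$ satisfying $|h-\tilde{h}|=O(h^2)$ via the counting estimate $(n\pm1)$-versus-interval-length, bridge with Lemma \ref{lem:endisbeta}, and conclude with Corollary \ref{cor:1dconverge}. The only point the paper treats separately that your generic counting argument does not cover is the degenerate case $|\torhh|=1$ (i.e.\ $a=b$), where $n$ stays bounded and the $O(h^2)$ claim fails; there the conclusion follows directly from the contraction bound \eqref{eqn:discontra} and the Lipschitz continuity of $u_0$.
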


\begin{proof}
We establish the result in two steps.

Step 1: We first prove the result under the assumption that $u_0 \in W^{1,\infty} (\Omega)\cap C(\overline{\Omega})$, and later extend the result to the case $u_0 \in  C(\overline{\Omega})$.

Since $\omega_0\left(|\beh|\right) \leq h$, it follows from \eqref{eqn:endpointsdif} that
\begin{equation}\label{eqn:endpointsdifLip}
|a_{\sbe_{\V}} - a| \leq h, \quad |b_{\sbe_{\V}} - b| \leq h.
\end{equation}

Let $\wrhh$ be the solution to \eqref{eqn:1dv_h} with initial condition $w_0(\rh+\aldh):=u_0(\vec{x}_0 +(\rh+\aldh) \vec{\nu})$ for any $\rh+\aldh \in \torhh$. Consider the domains $\torhh$ and $\tobhrhh$ associated with $\wrhh$ and $\wrhhbh$, respectively.

There are several cases to be considered.
\begin{itemize}
    \item [(i)] Suppose for all sufficiently small $h$,  $\left|\torhh\right|=1$, that is, $\torhh=\left\{0\right\} $, $a=b=0$, and $\alpha_0=0$. Then
    \begin{equation}\label{eqn:whw0u0}
    \wrhh(0,t)=w_0(0)=u_0\left(\vx_0\right).
    \end{equation}
    From \eqref{eqn:endpointsdifLip}, for any $\rh + \aldh \in \tobhrhh$, we have
    \[
    \vx_0 + \beh \vec{\nu}_\perp + (\rh + \aldh) \vec{\nu} \in B(\vx_0, Ch),
    \]
    for some constant $C=C\left(|\vec{\nu}|,|\vec{\nu}_{\perp}|\right)>0$.

    Hence, for any $\rh + \aldh \in \tobhrhh$,
    \begin{equation}\label{eqn:u0u0betadif}
        \left|u_0(\vx_0)-w_{\zero}^{\sbe_{\V}}\left(\rh + \aldh\right)\right|\leq C\left\|Du_0\right\|_{L^\infty\left(\Omega\right)}h.
    \end{equation}
    From Proposition \ref{prop:propdis}, we know
    \begin{equation}\label{eqn:whbetabdminmax}
    \min_{\rh + \aldh \in \tobhrhh}w_{\zero}^{\sbe_{\V}}(\rh+\aldh) \leq \wrhhbh \left(\rh+k_{\V}h, t\right) \leq \max_{\rh + \aldh \in \tobhrhh}w_{\zero}^{\sbe_{\V}}(\rh+\aldh).
    \end{equation}
    Combining \eqref{eqn:whw0u0}, \eqref{eqn:u0u0betadif} and \eqref{eqn:whbetabdminmax}, we have
\begin{equation}
        \left|\wrhh(0,t)-\wrhhbh \left(\rh+k_{\V}h, t\right) \right| \leq C\left\|Du_0\right\|_{L^\infty\left(\Omega\right)}h,
\end{equation}
    for some constant $C=C\left(|\vec{\nu}|,|\vec{\nu}_{\perp}|\right)>0$.
    \item [(ii)] Suppose there exists some $h>0$ such that $\left|\torhh\right|>1$. Without loss of generality, we may assume that the cardinalities satisfy
$\left|\torhh\right|>10,\left|\tobhrhh\right|>10$.

\end{itemize}

    \begin{itemize}
        \item[(a)] If $\left|\torhh\right|=\left|\tobhrhh\right|=n$ for some $n > 10$, we label the elements in $\torhh$ as
        \begin{equation} \label{eqn:labelab}
        a\leq \rh+k_1 h < \cdots <\rh+k_i h< \cdots < \rh+k_n h \leq b, \quad i=1, 2, \cdots, n,
        \end{equation}
        and elements in  $\tobhrhh$ by
        \begin{equation}\label{eqn:labelabbeta}
        a_{\sbe_{\V}} \leq \rh+k'_1 h < \cdots <\rh+k'_i h< \cdots < \rh+k'_n h \leq b_{\sbe_{\V}}, \quad i=1, 2, \cdots, n.
        \end{equation}

    Since $|a - a_{\sbe_{\V}}| \leq h$, we have
\[
\left| (\rh + k_i h) - (\rh + k'_i h) \right| \leq 2h \quad \text{for all } i = 1, 2, \ldots, n.
\]
Then, by Lemma \ref{lem:endisbeta}, it follows that
        \begin{equation}\label{eqn:estnsame}
    \left|\wrhh\left(\rh+k_i h,t\right)-\wrhhbh\left(\rh+k^\prime_ih,t\right)\right| \leq \omega \left(h\right),
        \end{equation}
    for some modulus of continuity $\omega$.
Since $\rh+k_{\V}h \in \tobhrhh$, $k_{\V}=k_{i(h)}'$ for some $i(h)\in \{1, 2, \cdots, n\}$.

    Therefore,
    \begin{equation}
    \begin{aligned}
    \left|\wrhhbh\left(\rh+k_{\V}h,t\right)-w(\alpha_0,t)\right| \leq &\left|\wrhh\left(\rh+k_{i(h)} h,t\right)-w(\alpha_0,t)\right|
    \\& \quad + \left|\wrhh\left(\rh+k_{i(h)} h,t\right)-\wrhhbh\left(\rh+k^\prime_{i(h)}h,t\right)\right|\\
    \leq & \tilde{\omega}(h),
    \end{aligned}
    \end{equation}
    where $\tilde{\omega}$ is some modulus of continuity and the last inequality comes from Corollary \ref{cor:1dconverge}, Lemma \ref{lem:endisbeta} and \eqref{eqn:estnsame}.

    \item[(b)] If $\left|\torhh\right|>\left|\tobhrhh\right|$, choose $\hat{h}>h$ such that $\left|\tilde{\Omega}_{\sr_{\V},\hat{\V}}\right|=\left|\tobhrhh\right|=n$. We also label the elements in $\tilde{\Omega}_{\sr_{\V},\hat{\V}}$ and $\tobhrhh$ as in \eqref{eqn:labelab} and \eqref{eqn:labelabbeta}, respectively. We claim that
    \begin{equation}\label{eqn:hhathdiff}
     \left|\hat{h}-h\right|\leq O(h^2).
    \end{equation}
    First, by the definitions of $\tilde{\Omega}_{\sr_{\V},\hat{\V}}$ and $\tobhrhh$, we have
    \[
    (n-1)h\leq \left|a_{\sbe}-b_{\sbe}\right|\leq (n+1)h, \qquad (n-1) \hat{h}\leq \left|a-b\right|\leq (n+1) \hat{h},
    \]
    which implies
    \begin{equation}\label{eqn:hhhatest}
    \frac{\left|a_{\sbe}-b_{\sbe}\right|}{n+1} \leq h \leq \frac{\left|a_{\sbe}-b_{\sbe}\right|}{n-1}, \qquad  \frac{\left|a-b\right|}{n+1} \leq \hat{h}\leq \frac{\left|a-b\right|}{n-1}.
    \end{equation}
    Hence,
    \begin{equation}
    \left|\hat{h}-h\right|\leq \max \left\{A, B\right\},
    \end{equation}
    where \[
A := \left|\frac{|a - b|}{n - 1} - \frac{|a_{\sbe_{\V}} - b_{\sbe_{\V}}|}{n + 1}\right|, \quad
B := \left|\frac{|a_{\sbe_{\V}} - b_{\sbe_{\V}}|}{n - 1} - \frac{|a - b|}{n + 1}\right|.
\]
From \eqref{eqn:hhhatest}, it follows that
\begin{equation}\label{eqn:nhestimate}
   \frac{1}{n+1} < \frac{1}{n-1} \leq C h,
\end{equation}
for some constant $C>0$.
Now we estimate
\[
\begin{aligned}
A&\leq \frac{\left||a-b|-|a_{\sbe}-b_{\sbe}|\right|}{n-1}+\frac{2|a_{\sbe}-b_{\sbe}|}{(n-1)(n+1)}\\
& \leq    \frac{2h}{n-1}+\frac{2h}{n-1}\\
& \leq Ch^2,
\end{aligned}
\]
where the second inequality follows from \eqref{eqn:endpointsdifLip} and \eqref{eqn:hhhatest}, and the last inequality uses \eqref{eqn:nhestimate}. Similarly, $B \leq Ch^2$ for some constant $C>0$. This proves that $|\hat{h}-h| \leq O(h^2)$.

Since $|a-a_{\sbe_{\V}}|\leq h$, it follows that
\[
|\rh + k_i \hat{h} - (\rh + k'_i h)| \leq C h, \quad \text{for all } i = 1, 2, \ldots, n.
\]
Then, by Lemma \ref{lem:endisbeta}, we obtain
\begin{equation}\label{eqn:hhath}
\left|\wrhh(\rh+k_i\hat{h}, t)-\wrhhbh(\rh+k'_ih,t)\right|\leq \omega(h), \quad \text{for all } i = 1, 2, \ldots, n.
\end{equation}
Since $\rh+k_{\V}h \in \tobhrhh$, $k_{\V}=k_{i(h)}'$ for some $i(h)\in \{1, 2, \cdots, n\}$.
Hence,
    \[
    \begin{aligned}
    &\left|\wrhhbh\left(\rh+k_{\V}h,t\right)-w(\alpha_0,t)\right|\\
    \leq &\left|\wrhhath\left(\rh+k_{i(h)} \hat{h},t\right)-w(\alpha_0,t)\right| + \left|\wrhhath \left(\rh+k_{i(h)} \hat{h},t\right)-\wrhhbh\left(\rh+k^\prime_{i(h)}h,t\right)\right|\\
    \leq & \tilde{w} (h),
    \end{aligned}
    \]
    where the last inequality comes from Corollary \ref{cor:1dconverge}, Lemma \ref{lem:endisbeta}, \eqref{eqn:hhathdiff} and \eqref{eqn:hhath}. Therefore, the conclusion holds.
    \item[(c)] If $\left|\torhh\right|<\left|\tobhrhh\right|$, choose $\hat{h}<h$ such that $\left|\tilde{\Omega}_{\sr_{\V},\hat{\V}}\right|=\left|\tobhrhh\right|=n$. The proof is similar to that in case (b).
    \end{itemize}
In each case, we showed that
\[
  \left|\wrhhbh\left(\rh+k_{\V}h,t\right)-w(\alpha_0,t)\right| \leq \tilde{\omega}(h)
\]
for some modulus of continuity $\tilde{\omega}$. Therefore, we have established the conclusion for $u_0 \in W^{1,\infty}(\Omega) \cap C(\overline{\Omega})$.

Step 2: Now suppose $u_0 \in C(\overline{\Omega})$. Then there exists a sequence $\left\{u_{\zero, \sm}\right\}_{\sm\in \mathbb{N}} \subset W^{1,\infty}(\Omega)\cap C(\overline{\Omega})$ such that $|u_{\zero, \sm}(x)-u_0(x)|\leq \frac{1}{m}$ for all $x \in \overline{\Omega}$.

For any $\alpha \in [a, b]$, define $w_{\zero, \sm}(\alpha):=u_{\zero, \sm}(\vec{x}_0 +\alpha\vec{\nu})$. Let $w_{\sr_{\V}, \V, \sm}:\torhh \to \mathbb{R}$ be the solution to \eqref{eqn:1dv_h} with initial data $w_{\zero, \sm}\big|_{\torhh}$, and $w_{\sm}:[a, b] \to \mathbb{R}$ be the solution to \eqref{HJE_NM} with initial data $w_{\zero, \sm}$.

For any $\alpha' \in [a_{\sbe_{\V}}, b_{\sbe_{\V}}]$, define $w_{\zero, \sm}^{\sbe_{\V}}(\alpha'):=u_{\zero, \sm}\left(\vxv+\alpha'\vec{\nu}+\beh\vec{\nu}_\perp\right)$. Let $w^{\sbe_{\V}}_{\sr_{\V}, \V, \sm}:\tobhrhh \to \mathbb{R}$ be the solution to \eqref{eqn:1dv_h} with initial data $w_{\zero, \sm}^{\sbe_{\V}}\big|_{\tobhrhh}$.

Since $|u_{\zero, \sm}(x)-u_0(x)|\leq \frac{1}{m}$ for all $x \in \overline{\Omega}$, it follows that
\[
\left|w_{\zero, \sm}(\alpha) - w_0(\alpha)\right| \leq \frac{1}{m} \quad \text{for all } \alpha \in [a, b],
\]
and
\[
\left|w_{\zero, \sm}^{\sbe_{\V}}(\alpha') - w_{\zero}^{\sbe_{\V}}(\alpha')\right| \leq \frac{1}{m} \quad \text{for all } \alpha' \in [a_{\sbe_{\V}}, b_{\sbe_{\V}}].
\]

Then, by Proposition \ref{prop:propdis}, it follows that for any $h>0$,
\begin{equation}\label{eqn:difdistbetalip}
    \left| w_{\rh, \V,\sm}^{\sbe_{\V}}(\rh+k_{\V}h, t)-\wrhhbh(\rh+k_{\V}h, t) \right| \leq \left\|w_{\zero, \sm}^{\sbe}-w_{\zero}^{\sbe_{\V}}\right\|_{L^\infty\left(\tobhrhh\right)} \leq \frac{1}{m}.
\end{equation}
From Corollary \ref{cor:1dcontcontract}, for any $\alpha \in [a, b]$,
\begin{equation}\label{eqn:contwlip}
    \left|w_{\sm}(\alpha,t)-w(\alpha,t)\right| \leq \left\|w_{\zero,\sm}-w_0\right\|_{L^\infty([a, b])}\leq\frac{1}{m}.
\end{equation}
Combining \eqref{eqn:difdistbetalip} and \eqref{eqn:contwlip}, we obtain
\[
\begin{aligned}
&\limsup_{h\to 0^+} \left| \wrhhbh(\rh+k_{\V}h, t)-w(\alpha_0, t)\right|\\
\leq & \limsup_{h \to 0^+} \left|w_{\sr_{\V}, \V,\sm}^{\sbe_{\V}}(\rh+k_{\V}h, t)-\wrhhbh(\rh+k_{\V}h, t)\right|
\\ &\qquad \qquad \qquad \quad  +\limsup_{h \to 0^+} \left|w_{\sr_{\V}, \V,\sm}^{\sbe_{\V}}(\rh+k_{\V}h, t)-w_{\sm}(\alpha_0,t)\right| +\left|w_{\sm}(\alpha_0, t)-w(\alpha_0, t)\right|\\
\leq &\frac{2}{m},
\end{aligned}
\]
where the last inequality comes from the conclusion already established for $w_{\sr_{\V}, \V,\sm}^{\sbe_{\V}}$.
Sending $m \to \infty$ completes the proof.
\end{proof}

\begin{rem} \label{rem:convex}
The estimate \eqref{eqn:endpointsdifLip} relies on the assumption that $\Omega$ is convex.
In general, for a nonconvex domain, this property may fail. Nonconvexity can cause a sudden change in the length of the line segment $S_{\sbe}$, even for arbitrarily small values of $\beta$.
Consequently, the convergence result may no longer hold in this case.
\end{rem}

Building on the previous results, we now establish the large deviation principle for the rescaled process $X^{\V}$ on $\overline{\Omega}$.

\begin{thm}\label{thm:LDPOmega}
Assume $\Omega$ is convex. Let $\vec{x}_0 \in \overline{\Omega}$, and for each small $h > 0$, define
\[
\vxzh = \vec{x}_0 + \beh \vec{\nu}_\perp + (\rh + \alzh)\vec{\nu} \in \overline{\Omega},
\]
where $\beh \in \mathbb{R}$ satisfies $\max\left\{\omega_0(|\beh|), |\beh|\right\}\leq h$ with modulus of continuity $\omega_0$ as in~\eqref{eqn:endpointsdif}, $\rh \in \mathbb{R}$ with $|\rh| < h$, and $\alzh = \kzh h$ for some $\kzh \in \mathbb{Z}$. Suppose that
\[
\lim_{h \to 0^+} \rh = 0, \quad \lim_{h \to 0^+} \alzh = 0.
\]
Then, the rescaled chemical reaction process $\cv(t)$ with initial condition $\cv(0) = \vxzh$ satisfies a large deviation principle at each time $t$, with the good rate function $\tilde{I}$ given by
\begin{equation}\label{eqn:deftildeI}
\tilde{I}\left(\vec{y}; \vec{x}_0, t\right):=\begin{cases}
			I\left(\frac{(\vec{y}-\vx_0)\cdot \vec{\nu}}{|\vec{\nu}|^2};0,t\right), & \quad \text{if $\, \vec{y} \, $ in $S_0$},\\
            \infty, & \quad \text{otherwise},
		 \end{cases}
\end{equation}
where $S_0=\left\{\vec{x}_0 + \alpha \vec{\nu}: a\leq \alpha \leq b\right\} $ is the maximal closed line segment through $\vx_0$, and $I$ is defined in \eqref{eqn:defIratef}.
\end{thm}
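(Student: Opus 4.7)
The proof will closely parallel that of Theorem \ref{thm:1dLDP}, with the key additional input being the off-axis convergence result of Theorem \ref{thm:convergelbeta}. The plan is to invoke Bryc's theorem \cite{Bryc_1990}: it suffices to establish exponential tightness of $\cv(t)$ together with the variational identity
\begin{equation*}
\lim_{h \to 0^+} h \log \mathbb{E}^{\vxzh}\bigl(e^{u_0(\cv(t))/h}\bigr) = \sup_{\vec{y} \in \overline{\Omega}} \bigl\{u_0(\vec{y}) - \tilde{I}(\vec{y}; \vx_0, t)\bigr\}
\end{equation*}
for every bounded continuous test function $u_0 \in C(\overline{\Omega})$. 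The exponential tightness is immediate from Lemma \ref{lem_mass} via the estimate \eqref{eq:exp_tight}, exactly as in Theorem \ref{thm:1dLDP}, so the whole content of the argument is the variational identity.

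To establish it, I first observe that since $\vxzh$ lies on the perturbed line $S_{\sbe_{\V}}$ and each reaction updates $\cv$ only by $\pm h\vec{\nu}$, the process stays in $\hobrh$ with $r = \rh$ and $\beta = \beh$ for all time. Applying the WKB transformation,
\begin{equation*}
h \log \mathbb{E}^{\vxzh}\bigl(e^{u_0(\cv(t))/h}\bigr) = \uh(\vxzh, t) = \wrhhbh(\rh + \alzh, t),
\end{equation*}
by the definition of $\wrhhbh$. The hypotheses on $\beh, \rh, \alzh$ in Theorem \ref{thm:LDPOmega} are precisely those of Theorem \ref{thm:convergelbeta} with $k_{\V} = \kzh$ and limiting point $\alpha_0 = 0 \in [a, b]$, so I obtain
\begin{equation*}
\lim_{h \to 0^+} h \log \mathbb{E}^{\vxzh}\bigl(e^{u_0(\cv(t))/h}\bigr) = w(0, t),
\end{equation*}
where $w$ solves \eqref{HJE_NM} with initial data $w_0(\alpha) = u_0(\vx_0 + \alpha \vec{\nu})$.

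The variational representation \eqref{eqn:1dvf}--\eqref{eqn:defIratef} of $w$ at $\alpha = 0$ yields
\begin{equation*}
w(0, t) = \sup_{\alpha \in [a, b]} \bigl\{u_0(\vx_0 + \alpha \vec{\nu}) - I(\alpha; 0, t)\bigr\}.
\end{equation*}
Parametrizing $\vec{y} = \vx_0 + \alpha \vec{\nu} \in S_0$ so that $\alpha = (\vec{y} - \vx_0) \cdot \vec{\nu}/|\vec{\nu}|^2$, and recalling from \eqref{eqn:deftildeI} that $\tilde{I}(\vec{y}; \vx_0, t) = +\infty$ for $\vec{y} \notin S_0$, the supremum can be extended to $\overline{\Omega}$ without change, delivering the required identity. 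The goodness of $\tilde{I}$ follows because $S_0$ is closed in $\overline{\Omega}$ and $I(\cdot; 0, t)$ is already a good rate function by Theorem \ref{thm:1dLDP}.

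The main conceptual obstacle—and the reason some work is packaged into the earlier sections—is that the initial point $\vxzh$ need not lie on the limiting segment $S_0$: the process evolves on a nearby affine lattice $\hobrh$ which only collapses to $S_0$ in the limit. Transferring the convergence of $\wrhb$ on this off-axis lattice to the reference solution $w$ on $S_0$ is precisely what Theorem \ref{thm:convergelbeta} accomplishes, and it relies critically on the convexity of $\Omega$ through the endpoint stability estimate \eqref{eqn:endpointsdif}. Once that transfer is in hand, the LDP follows as a direct corollary of Bryc's theorem combined with the optimal control representation of $w$.
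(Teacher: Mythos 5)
Your proposal is correct and follows essentially the same route as the paper's own proof: exponential tightness from \eqref{eq:exp_tight}, reduction via Bryc's theorem to the variational identity, the WKB identification of $h\log\mathbb{E}^{\vxzh}(e^{u_0(\cv(t))/h})$ with $\wrhhbh(\rh+\alzh,t)$, convergence to $w(0,t)$ via Theorem \ref{thm:convergelbeta}, and the reparametrization of the supremum over $S_0$ extended to $\overline{\Omega}$ using that $\tilde{I}=\infty$ off $S_0$. The only addition is your brief remark on the goodness of $\tilde{I}$, which the paper leaves implicit.
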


\begin{proof}
The process $\cv(t)$ evolves along the discrete mesh
\[
\begin{aligned}
\hat{\Omega}_{\beh, \sr_{\V}, \V}
&=\left\{\vec{x}_0 +\beta _h\vec{\nu}_\perp+(\rh+\alpha) \vec{\nu} \in \overline{\Omega}: \alpha =kh, k  \in \mathbb{Z}\right\}\\
&=\left\{\vec{x}_0 +\beh \vec{\nu}_\perp+(\rh+\alpha) \vec{\nu}: \alpha =kh, k\in \mathbb{Z}, k_{\sa_{\sbe_{\V}}, \sr_{\V}, \V}\leq k\leq k_{\scb_{\sbe_{\V}}, \sr_{\V}, \V} \right\}
\end{aligned}
\]
for some integers $k_{\sa_{\sbe_{\V}}, \sr_{\V}, \V} \leq k_{\scb_{\sbe_{\V}}, \sr_{\V}, \V}$. Hence $\cv(t)=\vx_0+\beh \vec{\nu}_\perp+(\rh+\aluh(t))\vec{\nu}$ for some $\alpha^{\V}(t) = k^{\V}(t)h$ where $k^{\V}(t) \in \mathbb{Z}$ and $\alpha^{\V}(0)=\alzh=\kzh h$.

Similar to the proof of Theorem \ref{thm:1dLDP}, we now establish the large deviation principle for the process $\cv(t)$ at a single time through Varadhan's inverse lemma \cite{Bryc_1990}. Since the exponential tightness of $\cv(t)$ for each $t\in[0,T]$ is already verified in \eqref{eq:exp_tight}, we only need to show the convergence of
the discrete Varadhan's nonlinear semigroup to the variational representation of the continuous viscosity solution.

Combining Varadhan's lemma \cite{Varadhan_1966} and Brysc's theorem \cite{Bryc_1990}, we know $\cv$ satisfies the large deviation principle with the good rate function $\tilde{I}(y)$ if and only if for any bounded continuous function $u_0$,

\begin{equation}\label{eqn:LDPbetato0}
\lim_{h \to 0^+}h \log \mathbb{E}^{\vxzh}\left(e^\frac{u_0\left(\cv(t)\right)}{h}\right) =\sup_{\vec{y}\in \overline{\Omega}}\left\{u_0(\vec{y})-\tilde{I}\left(\vec{y}; \vx_0, t\right) \right\}.
\end{equation}
Hence, it suffices to prove \eqref{eqn:LDPbetato0}.

From the WKB reformulation for the backward equation, we know
    \[
    \uh(\vxzh, t) = h \log \mathbb{E}^{\vxzh}\left(e^\frac{u_0\left(\cv(t)\right)}{h}\right),
    \]
    and hence by the definition of $\wrhhbh$, we have
    \[
    \wrhhbh(\rh+\alzh, t)=h\log \mathbb{E}^{\rh+\alzh}\left(e^\frac{w_{\zero}^{\sbe_{\V}}\left(\rh+\aluh(t)\right)}{h}\right).
    \]
 Since $\lim_{h \to 0^+}\rh = 0$ and $\lim_{h\to 0^+}\alzh = 0$, by Theorem \ref{thm:convergelbeta},
 \[
 \lim_{h\to 0^+} \wrhhbh (\rh+\alzh, t)=w(0,t),
 \]
 that is,
\begin{equation}
    \begin{aligned}
\lim_{h \to 0^+}h \log \mathbb{E}^{\vxzh}\left(e^\frac{u_0\left(\cv(t)\right)}{h}\right)&= \lim_{h \to 0^+} h\log \mathbb{E}^{\rh+\alzh}\left(e^\frac{w_{\zero}^{\sbe_{\V}}\left(\rh+\aluh(t)\right)}{h}\right)\\
&=\lim_{h\to 0^+} \wrhhbh(\rh+\alzh, t)\\
&=w(0, t)\\
& = \sup_{\alpha\in[a, b]}\left\{w_0(\alpha)-I\left(\alpha;0,t\right) \right\}\\&=\sup_{\alpha\in[a, b]}\left\{u_0(\vx_0+\alpha\vec{\nu})-I\left(\alpha;0,t\right) \right\}\\
&=\sup_{\vec{y}\in S_0}\left\{u_0(\vec{y})-I\left(\frac{(\vec{y}-\vx_0)\cdot \vec{\nu}}{|\vec{\nu}|^2};0,t\right) \right\}\\
&=\sup_{\vec{y}\in \overline{\Omega}}\left\{u_0(\vec{y})-\tilde{I}\left(\vec{y}; \vx_0, t\right) \right\},
    \end{aligned}
\end{equation}
where
\[
\tilde{I}\left(\vec{y}; \vec{x}_0, t\right):=\begin{cases}
			I\left(\frac{(\vec{y}-\vx_0)\cdot \vec{\nu}}{|\vec{\nu}|^2};0,t\right), & \quad \text{if $\, \vec{y} \, $ in $S_0$},\\
            \infty, & \quad \text{otherwise}.
		 \end{cases}
\]

Hence, the proof is complete.
\end{proof}

\begin{cor}
Assume $\Omega$ is convex. Let $\vec{x}_0 \in \overline{\Omega}$, and for each small $h > 0$, define
\[
\vxzh = \vec{x}_0 + \beh \vec{\nu}_\perp + (\rh + \alzh)\vec{\nu} \in \overline{\Omega},
\]
where $\beh \in \mathbb{R}$ satisfies $\omega_0(|\beh|) \leq h$ with modulus of continuity $\omega_0$ as in~\eqref{eqn:endpointsdif}, $\rh \in \mathbb{R}$ with $|\rh| < h$, and $\alzh = \kzh h$ for some $\kzh \in \mathbb{Z}$. Suppose that
\[
\lim_{h \to 0^+} \rh = 0, \quad \lim_{h \to 0^+} \alzh = 0.
\]
Let $\eta^\ast \in AC([0,t];\mathbb{R})$ be the unique path such that $I(\eta^\ast(t);0,t)=0$. Then $\vx^\ast(t):=\vx_0+\eta^\ast(t)\vec{\nu}$ is the mean-field limit path in the sense that the weak law of large numbers for process $\cv(t)$ holds at any time $t$. Moreover, for any $t>0$ and $\varepsilon>0$, there exists $h_0>0$ such that if $h<h_0$, then
    \[
    \mathbb{P}\left\{\left|\cv(t)-\vx^\ast(t)\right|\geq\varepsilon\right\} \leq e^{-\frac{\beta(\varepsilon, t)}{2h}},
    \]
    where $\beta(\varepsilon, t): = \inf_{\left|\vy-\vx^\ast(t)\right|\geq \varepsilon, \, \vy \in \overline{\Omega}} \tilde{I}\left(\vec{y};\vx_0,t\right) >0$ with $\tilde{I}$ denoting the rate function defined in \eqref{eqn:deftildeI}.
\end{cor}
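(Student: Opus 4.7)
The plan is to deduce the corollary as a standard consequence of the large deviation principle established in Theorem \ref{thm:LDPOmega}, combined with the uniqueness of the zero of the rate function $\tilde{I}(\,\cdot\,;\vx_0,t)$. First I would identify $\vx^\ast(t)$ as the unique minimizer of $\tilde{I}(\,\cdot\,;\vx_0,t)$ on $\overline{\Omega}$. By the definition \eqref{eqn:deftildeI}, $\tilde{I}(\vy;\vx_0,t) = +\infty$ for $\vy \notin S_0$, so every zero must lie in $S_0$. For $\vy = \vx_0 + \alpha \vec{\nu} \in S_0$, $\tilde{I}(\vy;\vx_0,t) = I(\alpha;0,t)$, and by the uniqueness lemma in Section~\ref{sec:1dLDP}, $I(\,\cdot\,;0,t)$ has the unique zero $\eta^\ast(t)$. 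Thus $\vx^\ast(t) = \vx_0 + \eta^\ast(t)\vec{\nu}$ is the unique zero of $\tilde{I}(\,\cdot\,;\vx_0,t)$.

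Next I would verify $\beta(\varepsilon,t) > 0$. Set $K_\varepsilon := \{\vy \in \overline{\Omega}: |\vy - \vx^\ast(t)| \geq \varepsilon\}$, which is closed and bounded, hence compact. Since $\tilde{I}$ is lower semicontinuous (being a good rate function), it attains its infimum over $K_\varepsilon$ at some point $\vy^\ast \in K_\varepsilon$. Because $\vy^\ast \neq \vx^\ast(t)$ and $\vx^\ast(t)$ is the unique zero, we have $\tilde{I}(\vy^\ast;\vx_0,t) > 0$, i.e., $\beta(\varepsilon,t) > 0$.

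Then I would invoke the LDP upper bound from Theorem~\ref{thm:LDPOmega} applied to the closed set $K_\varepsilon$:
\[
\limsup_{h \to 0^+} h \log \mathbb{P}\{\cv(t) \in K_\varepsilon\} \leq -\inf_{\vy \in K_\varepsilon} \tilde{I}(\vy;\vx_0,t) = -\beta(\varepsilon,t).
\]
Choosing $\delta = \beta(\varepsilon,t)/2$, there exists $h_0 > 0$ such that for all $h < h_0$,
\[
h \log \mathbb{P}\{|\cv(t) - \vx^\ast(t)| \geq \varepsilon\} \leq -\beta(\varepsilon,t) + \delta = -\tfrac{\beta(\varepsilon,t)}{2},
\]
which yields the stated exponential concentration estimate.

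Finally, the weak law of large numbers follows at once: for every fixed $\varepsilon > 0$, the concentration bound implies $\mathbb{P}\{|\cv(t) - \vx^\ast(t)| \geq \varepsilon\} \to 0$ as $h \to 0^+$, so $\cv(t) \to \vx^\ast(t)$ in probability. There is no essential difficulty in this argument; the only subtle point is confirming that $\vx^\ast(t)$ is the \emph{unique} zero of $\tilde{I}(\,\cdot\,;\vx_0,t)$, which reduces to the uniqueness lemma for the Skorokhod-constrained ODE in Section~\ref{sec:1dLDP}.
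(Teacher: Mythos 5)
Your argument is correct, and in fact the paper states this corollary without proof, so your write-up supplies exactly the standard chain of reasoning the authors leave implicit: Theorem \ref{thm:LDPOmega} gives the LDP with good rate function $\tilde{I}$, the upper bound applied to the closed set $K_\varepsilon=\{\vec{y}\in\overline{\Omega}:|\vec{y}-\vx^\ast(t)|\geq\varepsilon\}$ yields $\limsup_{h\to0^+}h\log\mathbb{P}\{\cv(t)\in K_\varepsilon\}\leq-\beta(\varepsilon,t)$, and halving the exponent gives the stated bound for $h$ small; the weak law of large numbers is then immediate.

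The one point worth a sentence of justification is the step from the uniqueness lemma of Section \ref{sec:1dLDP} to the positivity of $\beta(\varepsilon,t)$. That lemma asserts uniqueness of the \emph{path} with zero action; to conclude that $I(y;0,t)>0$ for every $y\neq\eta^\ast(t)$ you also need that whenever $I(y;0,t)=0$ the infimum in \eqref{eqn:defIratef} is attained by some admissible triple in $\mathrm{SP}(0)$, so that the uniqueness lemma applies to the minimizer. This attainment is standard here (the Lagrangian $\tilde{L}$ is nonnegative, superlinear, and the constraint set $[a,b]$ is compact, so minimizing sequences are precompact and the action is lower semicontinuous), and the corollary's own hypothesis already presupposes the existence of $\eta^\ast$, but it is the one place where your ``reduces to the uniqueness lemma'' should be made explicit. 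With that noted, your treatment of the case $K_\varepsilon\cap S_0=\emptyset$ (where $\beta=+\infty$ and the bound is vacuous) and the compactness/lower semicontinuity argument for attaining the infimum over $K_\varepsilon$ are both fine.
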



\section{example}\label{sec:example}
Now we present a concrete example showing that, in general, the limiting function $u$ does not satisfy the state constraint boundary condition for the continuous equation throughout the entire domain $\Omega$, which emphasizes the necessity of the discussion in Section 3 regarding the restriction of $u$ onto line segments.

Consider the following reaction with two species $X_1, X_2$:

\begin{equation}
\text{Reaction }: X_1 \quad \ce{<=>[k^+][k^-]} \quad X_2,
\end{equation}
where, for simplicity, the reaction rates are taken as $k^+=k^-=1$. The reaction vector for this reaction is $\vec{\nu}=(-1, 1)$.

Let $\Omega :=\left\{ (x_1, x_2) \in \mathbb{R}^2: (x_1-7)^2+(x_2-3)^2<2 \right\}$, which is the ball centered at $(7, 3)$ with radius $\sqrt{2}$. By definition, $\Phi^+(x_1, x_2):=x_1$ and $\Phi^-(x_1, x_2):=x_2$. Take the initial condition $u_0(x_1, x_2)=x_1+x_2$.

On the grid
\[
\Omegah := \{ \vxi = \vec{i} h; \,\,  \vec{i} \in \mathbb{N}^N,\, \vxi \in \overline{\Omega}\},
\]
the discrete backward HJE is
\begin{equation}\label{eqn:exdisc}
\begin{aligned}
\pt_t \uh(x_{i, 1}, x_{i, 2},t) =& \chi_{\{\left(x_{i,1}- h, x_{i, 2}+h\right) \in \Omegah\}} x_{i, 1}\left(e^{\frac{\uh\left(x_{i,1}- h, x_{i, 2}+h,t\right)-\uh\left((x_{i, 1}, x_{i, 2}, t\right)}{h}} - 1\right) \\
&+\chi_{\{\left(x_{i,1}+ h, x_{i, 2}-h\right) \in \Omegah\}}x_{i, 2}\left( e^{  \frac{\uh\left(x_1+ h, x_2-h,t\right)-\uh\left(x_{i, 1}, x_{i, 2},t\right)}{h} } - 1\right), \quad \left(x_{i, 1}, x_{i, 2}\right)\in \Omegah.
\end{aligned}
\end{equation}
Since $\vec{\nu} \cdot Du_0=0$, the solution $\uh$ to \eqref{eqn:exdisc} with initial data $u_0\big|_{\Omegah}$ is stationary:
\[
\uh \left(x_{i,1}, x_{i,2}, t\right)=u_0\left(x_{i, 1}, x_{i, 2}\right)=x_{i, 1}+x_{i, 2}.
\]
Hence, along any sequence $\{{\vx}_{i_k}\} \subset \Omegah$ with ${\vx}_{i_k} \to \vx$ as $k \to \infty$, and with mesh sizes $h_k\to 0$, the limiting function remains
\[
u(x_1, x_2, t)=u_0(x_1, x_2)=x_1+x_2.
\]
It seems natural to consider the following Cauchy problem with state constraint boundary condition for the limit $u$:

\begin{equation}\label{eqn:excont}
\left\{\begin{aligned}
\partial_t \tilde{u}\left(x_1, x_2,t\right) &\geq    x_1\left(e^{\vec{\nu}\cdot \nabla \tilde{u}(x_1, x_2, t)}-1\right)     +   x_2\left( e^{-\vec{\nu}\cdot \nabla \tilde{u}(x_1, x_2, t)}-1\right) \qquad \quad  \text{in } \Omega \times (0, \infty),\\
\partial_t \tilde{u}\left(x_1, x_2,t\right) & \leq    x_1\left(e^{\vec{\nu}\cdot \nabla \tilde{u}(x_1, x_2, t)}-1\right)     +   x_2\left( e^{-\vec{\nu}\cdot \nabla \tilde{u}(x_1, x_2, t)}-1\right) \qquad \quad  \text{on } \overline{\Omega} \times (0, \infty),\\
\tilde{u}(x_1, x_2, 0)&= u_0(x_1, x_2) \qquad \qquad \qquad \qquad \qquad\qquad\qquad\qquad \qquad \quad \,\, \text{on } \overline{\Omega}.
\end{aligned}
\right.
\end{equation}
The solution $\tilde{u}$ admits the representation formula
\[
\tilde{u}(x, t)= \sup \left\{u_0(\gamma(t)) - \int_0^t L\left(\gamma(s), \dot{\gamma}(s)\right)\, ds : \gamma \in \mathrm{AC} \left([0,t] ; \overline{\Omega}\right), \gamma(0)=x\right\}.
\]
However, the limit $u$ obtained above is not the solution to \eqref{eqn:excont}.
More precisely, $u$ fails to be a subsolution of \eqref{eqn:excont} on $\overline{\Omega} \times (0, \infty)$.
To see this, consider the test function $\phi(x_1, x_2, t):=\frac{21}{20}x_1+\frac{19}{20}x_2-\frac{1}{10}$. We can verify that $\phi \geq u$ on $\overline{\Omega} \times [0, \infty)$, and $u-\phi $ attains a local maximum  at $(6, 4) \in \partial \Omega$ and $t=1$. The subsolution property would require
\[
\partial_t \phi \left(6, 4,1\right) \leq    6\left(e^{\vec{\nu}\cdot \nabla \phi(6, 4, 1)}-1\right)     +  4\left( e^{-\vec{\nu}\cdot \nabla \phi (6, 4, 1)}-1\right).
\]
But since $\partial_t \phi(6, 4, 1) =0$ and $\nabla \phi(6,4,1)=\left(\frac{21}{20}, \frac{19}{20}\right)$, we compute
\[
\partial_t \phi \left(6, 4,1\right) =0 > 6(e^{-\frac{1}{10}}-1) +4(e^{\frac{1}{10}}-1)=6\left(e^{\vec{\nu}\cdot \nabla \phi(6, 4, 1)}-1\right)     +  4\left( e^{-\vec{\nu}\cdot \nabla \phi (6, 4, 1)}-1\right),
\]
which violates the inequality. Hence, $u$ is not a solution to the state constraint problem \eqref{eqn:excont}.

\section*{Acknowledgment}
Yuan Gao was partially supported by NSF under award DMS-2204288 and CAREER award DMS-2440651. Yuxi Han was partially supported by NSF under award DMS-2440651.

\bibliographystyle{alpha}
\bibliography{LD_vis_bib}

\appendix

\section{Some omitted proofs and lemmas}




\subsection{Existence of a sequence of local maximum points}\label{sec:localmax}
\begin{lem}
    Let $\varphi \in C^1\left([a, b] \times [0, \infty)\right)$ such that $\overline{w}-\varphi$ attains a strict local max at $(\alpha_0, t_0) \in [a, b] \times (0, \infty)$. Then there exists a sequence $\{h_n\}_{n=1}^{\infty}$ with $\lim_{n \to \infty} h_n = 0$ and a sequence $\{(r_n+\alpha_n, t_n) \} \subset \tornhn \times (0, \infty)$ with $\lim_{n \to \infty} r_n =0$, $\lim_{n \to \infty} \alpha_n = \alpha_0$ and $\lim_{n \to \infty} t_n = t_0$ such that
    \[
    \wrnhn\left(r_n+\alpha_n, t_n\right) \to \overline{w} (\alpha_0, t_0)
    \]
 and $(r_n+\alpha_n,t_n)$ is a local maximum of $\wrnhn-\varphi$.
\end{lem}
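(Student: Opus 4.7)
The plan is to adapt the standard discrete-to-continuous viscosity solution argument (as in Barles--Souganidis type convergence of monotone schemes) to our setting, exploiting the fact that $(\alpha_0,t_0)$ is a \emph{strict} local maximum of $\overline{w}-\varphi$ and that each $\tornhn$ is finite.

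First, since $(\alpha_0,t_0)$ is a strict local maximum and $t_0>0$, I will fix a radius $\delta>0$ small enough that the closed set
\[
K:=\overline{B_\delta(\alpha_0,t_0)}\cap\bigl([a,b]\times(0,\infty)\bigr)
\]
is contained in $[a,b]\times(0,\infty)$ and satisfies
\[
(\overline{w}-\varphi)(\alpha,t)<(\overline{w}-\varphi)(\alpha_0,t_0) \qquad \text{for all }(\alpha,t)\in K\setminus\{(\alpha_0,t_0)\}.
\]
Next, invoking the definition \eqref{eqn:defubar} of the USC envelope, I will extract sequences $h_n\to 0^+$, $r_n\to 0$, $\tilde{\alpha}_n\to\alpha_0$ with $r_n+\tilde\alpha_n\in\tornhn$, and $\tilde t_n\to t_0$ such that
\[
\wrnhn(r_n+\tilde\alpha_n,\tilde t_n)\longrightarrow \overline{w}(\alpha_0,t_0).
\]

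Second, for each $n$ large, I will consider the compact set $K_n:=K\cap(\tornhn\times[0,\infty))$, which is a finite union of compact time intervals (since $\tornhn$ is finite) and therefore compact. The continuous function $\wrnhn-\varphi$ (continuity in $t$ is guaranteed by Proposition~\ref{prop:propdis}) attains its maximum on $K_n$ at some point $(r_n+\alpha_n,t_n)$. By construction $(r_n+\tilde\alpha_n,\tilde t_n)\in K_n$ for large $n$, so
\[
\wrnhn(r_n+\alpha_n,t_n)-\varphi(r_n+\alpha_n,t_n)\geq \wrnhn(r_n+\tilde\alpha_n,\tilde t_n)-\varphi(r_n+\tilde\alpha_n,\tilde t_n).
\]

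Third, I will pass to a subsequence along which $(r_n+\alpha_n,t_n)$ converges to some $(\alpha^*,t^*)\in K$. Taking $\limsup$ in the previous inequality, using continuity of $\varphi$, the convergence of the right-hand side to $\overline{w}(\alpha_0,t_0)-\varphi(\alpha_0,t_0)$, and the defining property of $\overline{w}$ as a $\limsup$, I obtain
\[
(\overline{w}-\varphi)(\alpha^*,t^*)\;\geq\;\limsup_n \bigl(\wrnhn(r_n+\alpha_n,t_n)-\varphi(r_n+\alpha_n,t_n)\bigr)\;\geq\;(\overline{w}-\varphi)(\alpha_0,t_0).
\]
The strict-maximum property then forces $(\alpha^*,t^*)=(\alpha_0,t_0)$, which, being the unique subsequential limit, also yields convergence of the whole sequence. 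Moreover the sandwich forces $\wrnhn(r_n+\alpha_n,t_n)\to \overline{w}(\alpha_0,t_0)$, establishing the desired convergence.

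The final step is to upgrade ``maximum on $K_n$'' to ``local maximum on the whole grid.'' Since $(\alpha_n,t_n)\to(\alpha_0,t_0)$ which lies in the \emph{interior} of $B_\delta(\alpha_0,t_0)$, for all $n$ sufficiently large the point $(r_n+\alpha_n,t_n)$ lies in the relative interior of $K_n$ (specifically, its neighbors $r_n+\alpha_n\pm h_n$ also belong to $K_n$ whenever they belong to $\tornhn$, because $h_n\to 0$). Hence $(r_n+\alpha_n,t_n)$ is a local maximum of $\wrnhn-\varphi$, as required. The main delicate point I anticipate is verifying that the point truly lies in the interior of $K_n$—this is where the strictness of the maximum is essential, since without it the maximizer could drift to the topological boundary of the localizing ball and the argument would break down.
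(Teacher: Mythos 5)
Your proof is correct, and it establishes the lemma by the same underlying mechanism as the paper (strictness of the maximum plus the $\limsup$ definition of $\overline{w}$), but the execution of the key step is genuinely different. The paper works with a sequence of shrinking radii $\delta_n$, extracts an explicit gap $\varepsilon_n$ on the sphere $\partial B((\alpha_0,t_0),\delta_n)$, uniformizes that gap over the sphere by a compactness/covering argument to show $\wrh-\varphi<-\tfrac34\varepsilon_n$ on a whole discrete annulus, and then compares against a near-center point where the value exceeds $\overline{w}(\alpha_0,t_0)-\tfrac14\varepsilon_n$; the local maximizer is forced into the inner region by this quantitative sandwich. You instead fix a single radius, maximize $\wrnhn-\varphi$ over the compact localized grid set $K_n$ (compactness and attainment are justified correctly: $\tornhn$ is finite and $\wrnhn(r+\alpha,\cdot)$ is continuous in $t$ by Proposition \ref{prop:propdis}), and then locate the limit of the maximizers by a soft subsequence argument using the inequality $\overline{w}(\alpha^*,t^*)\geq\limsup_n \wrnhn(r_n+\alpha_n,t_n)$ together with the lower bound coming from the near-optimal points $(r_n+\tilde\alpha_n,\tilde t_n)$. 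Your route avoids the somewhat delicate uniformity-over-the-sphere step, and it has the additional merit of explicitly producing the convergence $\wrnhn(r_n+\alpha_n,t_n)\to\overline{w}(\alpha_0,t_0)$ at the actual local maximum point via the two-sided squeeze, a point the paper's write-up leaves slightly implicit (there the point satisfying the convergence and the point realizing the local maximum are not visibly the same). Your final interiority check (the maximizer eventually lies at distance less than $\delta/2$ from the center, so its spatial neighbors at distance $h_n$ and a two-sided time interval around $t_n$ remain in $K_n$) is exactly what is needed for the notion of local maximum used in Step 2 of Theorem \ref{thm:wbarsubsln}.
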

\begin{proof}
Without loss of generality, we assume $\overline{w} (\alpha_0, t_0)=\varphi(\alpha_0, t_0)$. Let \(\{\delta_n\}_{n=1}^\infty\subset(0,\infty)\) be a strictly decreasing sequence with \(\delta_n\to0\). Since
\[
\overline w(\alpha_0, t_0) - \varphi(\alpha_0, t_0)=0,
\]
for each \(n\) we can choose \(\varepsilon_n>0\) with \(\varepsilon_n\to0\) such that
\begin{equation}\label{eqn:wsonbd}
w(\alpha,t) - \varphi(\alpha,t) < -\,\varepsilon_n
\quad\text{for all }
(\alpha,t)\in \partial B\bigl((\alpha_0,t_0),\delta_n\bigr).
\end{equation}
Since $\varphi \in C_1$, there exists a constant $\tilde{\delta}_n >0$ such that for any $(\alpha, t), (\beta,s) \in [a, b]\times [0,\infty)$ with $\left|\alpha-\beta\right|+\left|t-s\right|<\tilde{\delta}_n$, we have
\begin{equation}
\left|\varphi(\alpha, t)-\varphi(\beta, s)\right| <\frac{\varepsilon_n}{4}.
\end{equation}
From \eqref{eqn:wsonbd}, for every $(\alpha, t) \in \partial B\bigl((\alpha_0,t_0),\delta_n\bigr)$, there exists a constant $\delta^\ast_{\alpha, t, n} \in \left(0, \tilde{\delta}_n\right)$ such that for any $r, h$ with $|r|<h< \frac{\delta^\ast_{\alpha, t, n}}{4}$ and $(r+\alhi, s ) \in B \left((\alpha, t), \delta^\ast_{\alpha, t, n} \right) \cap \left(\torh \times (0, \infty)\right)$, we have
\[
\wrh (r+\alhi, s)- \varphi(r+\alhi,s)<-\frac{3}{4}\varepsilon_n.
\]

By the compactness of the circle $\partial B\bigl((\alpha_0,t_0),\delta_n\bigr)$, we may deduce that there exists a constant $\delta'_n \in \left(0, \min\left\{\frac{\delta_n}{2}, \tilde{\delta}_n\right\}\right)
$ such that for any $r, h$ with $|r|<h< \frac{\delta'_n}{4}$ and $(r+\alhi, s ) \in \left( \overline{B((\alpha_0, t_0),\delta_n)}\setminus B((\alpha_0, t_0), \delta_n-\delta'_n) \right) \cap \left(\torh \times (0, \infty)\right)$,
\begin{equation}\label{eqn:wsout}
    \wrh (r+\alhi, s)- \varphi(r+\alhi,s)<-\frac{3}{4}\varepsilon_n.
\end{equation}
On the other hand, by the definition of $\overline{w}(\alpha_0, t_0)$, there exists $r_n \in \mathbb{R}, h_n>0$ with $|r_n|<h_n <\frac{\delta'_n}{4}$ and  $(r_n+\alpha_n, t_n) \in B((\alpha_0, t_0),\delta'_n)\cap \tornhn \times (0, \infty)$ such that
\begin{equation}\label{eqn:wlin}
    \overline{w}\left(\alpha_0, t_0\right) -\frac{1}{4}\varepsilon_n <\wrnhn\left(r_n+\alpha_n, t_n\right).
\end{equation}
Combining \eqref{eqn:wsout} and \eqref{eqn:wlin}, we have
\[
\begin{aligned}
&\max_{ (r_n+\alpha_{h_n, i}, s) \in \left( \overline{B((\alpha_0, t_0),\delta_n)}\setminus B((\alpha_0, t_0), \delta_n-\delta'_n) \right) \cap \tornhn \times (0, \infty)}\left(\wrnhn (r_n+\alpha_{h_n,i}, s)- \varphi(r_n+\alhi,s)\right)\\
\leq & \overline{w}(\alpha_0, t_0)-\varphi(\alpha_0, t_0)-\frac{3}{4} \varepsilon_n\\
\leq & \wrnhn\left(r_n+\alpha_n, t_n\right) -\varphi(\alpha_0, t_0)-\frac{1}{2}\varepsilon_n\\
\leq & \wrnhn\left(r_n+\alpha_n, t_n\right) -\varphi\left(r_n+\alpha_n, t_n\right) -\frac{\varepsilon_n}{4},
\end{aligned}
\]
which implies $\wrnhn- \varphi$ has a local max in the interior of $B((\alpha_0, t_0), \delta'_n)$.
Finally, as \(n\to\infty\), note that
\[
\delta'_n \to 0, \quad h_n\to0, \quad r_n\to0, \quad \alpha_n\to\alpha_0, \quad t_n\to t_0,
\]
and
\[
\wrnhn(r_n+\alpha_n,t_n)\;\longrightarrow\;\overline w(\alpha_0,t_0),
\]
as required.
\end{proof}

\subsection{Boundedness of $v$ in an optimal triple $(\eta, v, l) \in \text{SP}(\alpha)$ of $w(\alpha,t)$}
\begin{lem}\label{lem:velbd}
Assume $u_0 \in W^{1, \infty}(\Omega) \cap C(\overline{\Omega})$. Suppose that $\beta>0$ is sufficiently small. Let $w : [a, b] \times [0, \infty) \to \mathbb{R}$ and $w^{\sbe}: [a_{\sbe}, b_{\sbe}] \times [0, \infty) \to \mathbb{R}$ be the solutions to \eqref{HJE_NM} and \eqref{eqn:limitbeta} respectively. For any $\alpha \in [a, b]$, let $(\eta, v, l) \in \mathrm{SP}(\alpha)$ satisfy \eqref{eqn:wpath}. Similarly, for any $\alpha' \in [a_{\sbe}, b_{\sbe}]$, let $(\eta_{\sbe}, v_{\sbe}, l_{\sbe}) \in \mathrm{SP}_{\sbe}(\alpha')$ satisfy \eqref{eqn:wbetaocf}.

Then both $v$ and $v_{\sbe} \in L^\infty\left([0, \infty)\right)$. Moreover, there exists a constant $C=C\left(\Omega, H, \left\|Du_0\right\|_{L^\infty(\Omega)}\right)>0$ independent of $\alpha, \alpha'$ such that
\[
\|v\|_{L^\infty\left([0,\infty)\right)}, \|v _{\sbe}\|_{L^\infty\left([0,\infty)\right)} \leq C.
\]
\end{lem}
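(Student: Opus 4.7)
The plan is to reduce the $L^\infty$-bound on the optimal velocity to Lipschitz estimates on the value function, via a Pontryagin/envelope-type argument that exploits the superlinearity of $\tilde L$ (respectively $\tilde L_{\sbe}$). The key point is that the constants appearing below all depend only on $\Omega$, $\tilde H$, and $L_0:=\|Du_0\|_{L^\infty(\Omega)}$, and therefore are uniform in $\alpha,\alpha'$ and (for $\beta$ small) in $\beta$.

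First, I would establish the Lipschitz regularity of $w$. Since $w_0$ is $L_0$-Lipschitz on $[a,b]$, a standard propagation-of-Lipschitz-bounds argument using the comparison principle (Proposition \ref{prop:cpn}) yields
\[
|w(\alpha_1,t)-w(\alpha_2,t)| \leq L_0|\alpha_1-\alpha_2|, \qquad \alpha_1,\alpha_2\in[a,b], \; t\geq 0.
\]
Concretely, one compares $w$ with the shifted viscosity solution $(\alpha,t)\mapsto w(\pi(\alpha+\tau),t)+L_0|\tau|$ (where $\pi$ is the reflection onto $[a,b]$) which remains a supersolution thanks to the Neumann condition. Using the equation itself, this also gives the time Lipschitz bound
\[
|\partial_t w(\alpha,t)| \leq C_H := \sup_{\alpha\in[a,b],\,|p|\leq L_0}|\tilde H(\alpha,p)|,
\]
a finite quantity depending only on $\tilde\Phi^{\pm}$ and $L_0$.

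Second, I would invoke the dynamic programming principle associated with \eqref{eqn:1dvf}: for the optimal triple $(\eta,v,l)\in\mathrm{SP}(\alpha)$ achieving $w(\alpha,t)$,
\[
w(\eta(s_1),t-s_1)-w(\eta(s_2),t-s_2) = \int_{s_1}^{s_2}\tilde L(\eta(s),v(s))\,ds, \qquad 0\leq s_1\leq s_2\leq t.
\]
Let $s_0\in(0,t)$ be a Lebesgue point of $v$. Using the identity $\dot\eta(s)=v(s)-l(s)n(\eta(s))$ together with the Lipschitz bounds above, dividing by $s_2-s_1$ and sending $s_1,s_2\to s_0$ gives
\[
\tilde L(\eta(s_0),v(s_0)) \leq L_0\,|v(s_0)-l(s_0)n(\eta(s_0))| + C_H \leq L_0\,|v(s_0)| + L_0\,l(s_0) + C_H.
\]
On the interior set $\{\eta(s_0)\in(a,b)\}$ we have $l(s_0)=0$, so superlinearity of $\tilde L$ (inherited from coercivity of $\tilde H$) immediately forces $|v(s_0)|\leq C$ for a constant $C=C(\Omega,\tilde H,L_0)$.

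The main obstacle is the boundary set $\{\eta(s_0)\in\{a,b\}\}$, on which $l(s_0)$ may be positive and a priori unbounded. Here I would argue that an optimal control is never wasteful at the boundary: if at a boundary instant one replaces $(v(s_0),l(s_0))$ by $(v(s_0)-l(s_0)n(\eta(s_0)),0)$, the trajectory $\eta$ and reflection condition are unchanged while the Lagrangian cost does not increase, because $\tilde L(\alpha,\cdot)$ is strictly convex and minimized at $\partial_p\tilde H(\alpha,0)=\tilde\Phi^+(\alpha)-\tilde\Phi^-(\alpha)$, which is bounded on $[a,b]$; iterating this replacement yields an equivalent optimal triple in which $v$ coincides with $\partial_p\tilde H(\eta,p^\ast(s))$ for some selection $|p^\ast(s)|\leq L_0$ a.e.\ (via the Neumann condition $p=0$ on $\{a,b\}$). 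The resulting $v$ is then globally bounded by $\sup_{\alpha\in[a,b],\,|p|\leq L_0}|\partial_p\tilde H(\alpha,p)|\leq C$. Finally, the exact same chain of estimates applies verbatim to $(\eta_{\sbe},v_{\sbe},l_{\sbe})\in\mathrm{SP}_{\sbe}(\alpha')$: the Lipschitz constant of $w_0^{\sbe}$ equals $L_0$ independently of $\beta$, the coercivity constants of $\tilde H_{\sbe}$ are uniform for $|\beta|$ small by continuity of $\Phi^{\pm}$ on the compact set $\overline\Omega$, and hence the same bound $\|v\|_\infty,\|v_{\sbe}\|_\infty\leq C$ holds with $C=C(\Omega,\tilde H,L_0)$.
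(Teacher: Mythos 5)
Your overall strategy—reduce the velocity bound to Lipschitz estimates on $w$ and then exploit the superlinearity of $\tilde L$—is the same as the paper's, but the way you obtain the Lipschitz estimates has a genuine gap. You propose to get the \emph{spatial} Lipschitz bound first, with the same constant $L_0=\|Du_0\|_{L^\infty(\Omega)}$, by comparing $w$ with the shifted/reflected function $(\alpha,t)\mapsto w(\pi(\alpha+\tau),t)+L_0|\tau|$. This fails here because the Hamiltonian $\tilde H(\alpha,p)=\tilde\Phi^+(\alpha)(e^p-1)+\tilde\Phi^-(\alpha)(e^{-p}-1)$ depends on $\alpha$ through $\tilde\Phi^{\pm}$, so the shifted function is not a supersolution of the same equation; spatial Lipschitz constants are not preserved under the flow for $x$-dependent Hamiltonians. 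Since your time-Lipschitz bound $|\partial_t w|\le C_H$ is then read off from the equation using that spatial bound, the whole chain collapses. The paper proceeds in the opposite order: it first proves the \emph{time} Lipschitz bound by building an explicit classical supersolution $\phi_\eps(\alpha,t)=2\eps+w_{0,\eps}(\alpha)+\zeta_\eps\circ\psi(\alpha)+C_0t$, where the corrector $\psi$ is engineered so that $\partial_\alpha\phi_\eps$ has the right signs at $a,b$ (making the Neumann condition hold in the classical sense), applies the comparison principle, and pairs this with the trivial lower bound from the constant trajectory; only then does it deduce the spatial bound $\|\partial_\alpha w\|_\infty\le C$ from the coercivity of $\tilde H$ and the equation itself. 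You would need to adopt that order (or another barrier argument) for the first half of your proof to stand.

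The second half of your argument, bounding $v$ at boundary instants where $l>0$ by "replacing $(v,l)$ with $(v-ln,0)$," is also not justified as written: $\tilde L(\eta,v-ln)$ need not be smaller than $\tilde L(\eta,v)$ (e.g.\ if $v$ already sits at the minimizer of $\tilde L(\eta,\cdot)$), so the replacement can increase the cost, and the claim that one can always pass to an equivalent optimal triple with $v=\partial_p\tilde H(\eta,p^\ast)$, $|p^\ast|\le L_0$, is asserted rather than proved. The paper sidesteps this entirely by citing the proof of \cite[Theorem 7.2]{Ishii2011}, which delivers the bound $\|v\|_{L^\infty}\le C$ directly from the Lipschitz constants of $w$; if you want a self-contained argument you would have to carry out that boundary analysis carefully, which is the technically delicate part of Ishii's proof. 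Your closing observation that all constants are uniform in $\alpha,\alpha'$ and in small $\beta$ is correct and matches the paper.
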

\begin{proof}
We prove the result for $v$ first. By \cite[Theorem 7.2]{Ishii2011}, it suffices to show that $w \in \mathrm{Lip}\left([a, b] \times [0, \infty)\right)$. We first establish a Lipschitz bound in time for $w$ by following the arguments in the proof of \cite[Lemma 5.3, 5.4]{Ishii2011}, and then derive a Lipschitz estimate in $\alpha$ using the coercivity of $\tilde{H}$.

Since $u_0 \in W^{1, \infty}(\Omega) \cap C(\overline{\Omega})$, we have $w_0 \in \mathrm{Lip}\left([a, b] \right)$. Let $\varepsilon>0$, and choose $w_{0,\varepsilon} \in C^1([a, b])$ such that
\[
\left\|w_{0, \varepsilon}-w_0\right\|_{L^\infty([a,b])} \leq \varepsilon, \qquad \|w'_{0, \varepsilon}\|_{L^\infty([a, b])} \leq \left\|Du_0\right\|_{L^\infty(\Omega)}.
\]

Next, select a function $\psi \in C^1([a, b])$ such that $\psi (\alpha )<0$ and $\left|\psi'(\alpha)\right| \leq 1$ for all $\alpha \in (a, b)$, $\psi(a)=\psi(b)=0$, $\psi'(a)=-1$, and $\psi'(b)=1$. Multiplying $\psi$ by a positive constant if necessary, we may assume $w_{0, \varepsilon}'(a)+\psi'(a)<0$ and $w_{0, \varepsilon}'(b)+\psi'(b)>0$ for all $\varepsilon>0$, and $\left\|\psi'\right\|_{L^\infty([a, b])} \leq \max\left\{3 \left\|Du_0\right\|_{L^\infty(\Omega)}, 1\right\}$.

We now approximate the function $r \mapsto (-\varepsilon)\vee (\varepsilon \wedge r)$ on $\mathbb{R}$ by a smooth function $\zeta_\varepsilon \in C^1(\mathbb{R})$ with $\left\|\zeta_\varepsilon\right\|_{L^\infty(\mathbb{R})} \leq \varepsilon$, $\left\|\zeta_\varepsilon'\right\|_{L^\infty(\mathbb{R})}\leq1$, and $\zeta_\varepsilon'(0)=1$. Note that $\left(\zeta_\eps \circ \psi(\alpha)\right)'=\psi'(\alpha)$ at $\alpha=a, b$, and $\left|w_0(\alpha)-w_{0, \varepsilon}\left(\alpha\right)-\zeta_\varepsilon\circ \psi(\alpha)\right| \leq 2 \varepsilon$ for all $\alpha \in [a, b]$. Furthermore, we can choose a constant $C_0=C_0\left(\Omega, H,  \left\|Du_0\right\|_{L^\infty(\Omega)}\right)>0$, independent of $\varepsilon$, such that
\[
\tilde{H}\left(\alpha, w_{0, \varepsilon}'+(\zeta_\varepsilon \circ \psi)'(\alpha)\right) \leq C_0
\]
for all $\alpha \in [a, b]$.

Define the function $\phi_\varepsilon(\alpha, t): =2\varepsilon +w_{0, \varepsilon}(\alpha)+\zeta_\varepsilon \circ \psi(\alpha) +C_0t$. Note that $\partial_\alpha \phi_\varepsilon(a, t)=w_{0, \varepsilon}'(a)+\psi'(a)<0$ and $\partial_\alpha \phi_\varepsilon(b, t)=w_{0, \varepsilon}'(b)+\psi'(b)>0$. Then $\phi_\varepsilon \in C^1([a, b]\times[0, \infty))$ is a classical supersolution to \eqref{HJE_NM}. By Proposition \ref{prop:cpn}, we have
\[
\phi_\varepsilon(\alpha, t) \geq w(\alpha, t) \quad \text{for all }\alpha \in [a, b], \, t \geq 0,
\]
which implies
\[4\varepsilon + w_0(\alpha) +C_0t\geq w(\alpha, t) \quad \text{for all }\alpha \in [a, b], \, t \geq 0.
\]
Since $\varepsilon$ is arbitrary, we conclude
\begin{equation}\label{eqn:liptgeq}
w_0(\alpha) +C_0t\geq w(\alpha, t) \quad \text{for all }\alpha \in [a, b], \, t \geq 0.
\end{equation}

Let $(\alpha, t) \in [a, b] \times [0, \infty)$. Set $\eta(s)=\alpha, v(s)=0$, and $l(s)=0$ for $s\geq 0$. Then $\left(\eta, v, l\right) \in \mathrm{SP}(\alpha)$. Hence
\begin{equation}\label{eqn:liptleq}
 \begin{aligned}
w(\alpha, t) &\geq w_0(\alpha) -t\tilde{L}(\alpha, 0)\\
&\geq w_0(\alpha)+t \inf_{\alpha \in [a, b], p\in \mathbb{R}}\tilde{H}(\alpha, 0)\\
&\geq w_0(\alpha)+t  \inf_{x \in \overline{\Omega}, p\in \mathbb{R}^2}H(x, p)\\
&\geq  w_0(\alpha)-tC
\end{aligned}
\end{equation}
where $C>0$ is a constant such that $-C < \inf_{x \in \overline{\Omega}, p\in \mathbb{R}^2}H(x, p)$.

Combining \eqref{eqn:liptgeq} and \eqref{eqn:liptleq}, for any $\alpha \in [a, b]$ and $t\geq 0$,
\begin{equation}\label{eqn:lipwte0}
\left|w_0(\alpha)-w(\alpha, t)\right| \leq Ct
\end{equation}
for some constant $C= C\left(\Omega, H, \left\|Du_0\right\|_{L^\infty(\Omega)}\right)>0$.

For fixed $s>0$, $v(x, t):=w(\alpha, s+t)$ is a solution to \eqref{HJE_NM} with the initial condition $v_0(\alpha)=v(\alpha, 0)= w(\alpha, s)$. Since
\[
v_0 -\left\|w_0-v_0\right\|_{L^\infty([a, b])} \leq w_0\leq v_0 +\left\|w_0-v_0\right\|_{L^\infty([a, b])},
\]
by Proposition \ref{prop:cpn}, we have for any $\alpha \in [a, b]$ and $t\geq 0$,
\[
v(\alpha, t) -\left\|w_0-v_0\right\|_{L^\infty([a, b])} \leq w(\alpha, t)\leq v(\alpha, t) +\left\|w_0-v_0\right\|_{L^\infty([a, b])}.
\]
Therefore,
\[
\left|w(\alpha, t) - w(\alpha, s+t)\right| \leq \left|w_0(\alpha) -w(\alpha, s)\right|\leq Cs,
\]
where $C= C\left(\Omega, H, \left\|Du_0\right\|_{L^\infty(\Omega)}\right)>0$ comes from \eqref{eqn:lipwte0}. Hence, $w$ is Lipschitz in time.

Let $C_1, C_2 >0$ be such that
\[
C_1 \leq \left\|\Phi^+\right\|_{L^\infty\left(\overline{\Omega}\right)}, \left\|\Phi^-\right\|_{L^\infty\left(\overline{\Omega}\right)} \leq C_2.
\]
Then the Hamiltonian $\tilde{H}$ satisfies the coercivity estimate
\begin{equation}\label{eqn:coertH}
\tilde{H}(\alpha, p) \geq C_1\left(e^{|p|}-1\right)-C_2.
\end{equation}
Since $w$ solves \eqref{HJE_NM} and $\left\| w_t\right\|_{L^\infty \left([a,b]\times [0, \infty)\right)} \leq C$ for some constant $C= C\left(\Omega, H, \left\|Du_0\right\|_{L^\infty(\Omega)}\right)>0$, the coercivity \eqref{eqn:coertH} implies
\begin{equation}
\left\|\partial_\alpha w\right\|_{L^\infty \left([a,b]\times [0, \infty)\right)} \leq C,
\end{equation}
for some constant $C= C\left(\Omega, H, \left\|Du_0\right\|_{L^\infty(\Omega)}\right)>0$.
Hence, $w \in \mathrm{Lip}\left([a, b] \times [0, \infty)\right)$ and
\begin{equation}\label{eqn:lipbdtalphaw}
 \left\| w_t\right\|_{L^\infty \left([a,b]\times [0, \infty)\right)},  \, \left\|\partial_\alpha w\right\|_{L^\infty \left([a,b]\times [0, \infty)\right)} \leq \tilde{C}
\end{equation}
for some constant $\tilde{C}=\tilde{C}\left(\Omega, H, \left\|Du_0\right\|_{L^\infty(\Omega)}\right)>0$.

The same argument applies to $w^{\sbe}$, showing that $w^{\sbe} \in \mathrm{Lip}([a_{\sbe}, b_{\sbe}]\times [0,\infty))$ and
\[
 \left\| w_t^{\sbe}\right\|_{L^\infty \left([a,b]\times [0, \infty)\right)},  \, \left\|\partial_\alpha w^{\sbe}\right\|_{L^\infty \left([a,b]\times [0, \infty)\right)} \leq \tilde{C}
\]
with the same constant $\tilde{C}$ as in \eqref{eqn:lipbdtalphaw}.

By the proof of \cite[Theorem 7.2]{Ishii2011}, there exists a constant $C=C\left(\Omega, H, \left\|Du_0\right\|_{L^\infty(\Omega)}, \tilde{C}\right)>$ such that
\[
\|v\|_{L^\infty\left([0,\infty)\right)}, \|v _{\sbe}\|_{L^\infty\left([0,\infty)\right)} \leq C,
\]
where $\tilde{C}$ is the Lipschitz bound for $w, w^{\sbe}$.

\end{proof}

\end{document}